\newtheorem{theorem}{Theorem}
\newtheorem{lemma}{Lemma}[section]
\newtheorem{definition}[lemma]{Definition}
\newtheorem{proposition}[lemma]{Proposition}
\newtheorem{remark}[lemma]{Remark}
\newtheorem{corollary}[lemma]{Corollary}
\newcommand{\be}{\begin{equation}}
\newcommand{\ee}{\end{equation}}
\newcommand{\gm}{\gamma}
\newcommand{\R}{\mathbb{R}}
\newcommand{\eps}{\varepsilon}
\newcommand{\bp}{\begin{proof}}
\newcommand{\ep}{\end{proof}}
\numberwithin{equation}{section}
\begin{document}




\title{A representation formula of the viscosity solution of the contact Hamilton-Jacobi equation and its applications}

\author{Panrui Ni \and Lin Wang \and Jun Yan}
\address{Shanghai Center for Mathematical Sciences,  Fudan University, Shanghai 200438, China}

\email{prni18@fudan.edu.cn}
\address{School of Mathematics and Statistics, Beijing Institute of Technology, Beijing 100081, China}

\email{lwang@bit.edu.cn}
\address{School of Mathematical Sciences, Fudan University, Shanghai 200433, China}

\email{yanjun@fudan.edu.cn}

\subjclass[2010]{37J50; 35F21; 35D40.}
\keywords{Weak KAM theory, Hamilton-Jacobi equations, Aubry sets}

\begin{abstract}
\noindent Assume $M$ is a closed, connected and smooth Riemannian manifold. We consider the evolutionary Hamilton-Jacobi equation
\begin{equation*}
  \left\{
   \begin{aligned}
   &\partial_t u(x,t)+H(x,u(x,t),\partial_xu(x,t))=0,\quad (x,t)\in M\times(0,+\infty),\\
   &u(x,0)=\varphi(x),\\
   \end{aligned}
   \right.
\end{equation*}
where $\varphi\in C(M)$ and the stationary one
\begin{equation*}
  H(x,u(x),\partial_x u(x))=0,
\end{equation*}
where $H(x,u,p)$ is continuous, convex and coercive in $p$, uniformly Lipschitz in $u$. By introducing a solution semigroup, we provide a {\it representation formula} of the viscosity solution of the evolutionary equation. As its applications, we obtain a necessary and sufficient condition for the existence of the viscosity solutions of the stationary equations.  Moreover, we prove a  new comparison theorem depending on the neighborhood of the projected Aubry set essentially, which is   different from the one for the Hamilton-Jacobi equation independent of $u$.
\end{abstract}

\date{\today}
\maketitle

\tableofcontents

\section{Introduction and main results}

The study of the theory of viscosity solutions of the following two forms of Hamilton-Jacobi equations
\begin{equation}\label{evll}
  \partial_t u(x,t)+H(x,u(x,t),\partial_xu(x,t))=0,
\end{equation}
and
\begin{equation}\label{hj}
  H(x,u(x),\partial_x u(x))=0
\end{equation}
has a long history. There are many celebrated results on the existence, uniqueness, stability and large time behavior problems for the viscosity solutions (see \cite{Intr,CEL,CHL2,Vis} for instance).

For the case with the Hamiltonian independent of  $u$, its characteristic equation is the Hamilton equation.
For the case with the Hamiltonian depending on $u$, the  characteristic equation is called the contact Hamilton equation. In \cite{Wa1}, the authors introduced an implicit variational principle for the contact Hamilton equation. Based on that, a representation formula was provided for the  viscosity solution of the evolutionary equation, and the existence of the solutions for the ergodic problem was also proved in \cite{Wa2}. In \cite{Wa3}, the Aubry-Mather theory was developed  for contact Hamiltonian systems with strictly increasing dependence on $u$. In \cite{Wa4}, the authors further studied the strictly decreasing case, and discussed  large time behavior of the viscosity solution of the evolutionary case.

 All of the results in \cite{Wa2,Wa3,Wa4} are  based on the implicit variational principle established in \cite{Wa1}.  In order to get the $C^1$-regularity of the minimizers, it was assumed that  the contact Hamiltonian $H$ is  {\it of class $C^3$, strictly convex} and {\it superlinear}.  This paper is devoted to reducing those assumptions to {\it continuous, convex} and {\it coercive}, which are standard from the PDE aspect. It is clear that the contact Hamiltonian equation can not be defined under these assumptions. Moreover, the compactness estimate of the set of certain minimizers does not hold true. That estimate (see \cite[Lemma 2.1]{Wa2}) plays a crucial role in the previous work \cite{Wa2,Wa3,Wa4}. For the classical Hamilton-Jacobi (HJ) equation with time-independence, the related problems were considered in \cite{gen,Fa}.
Different from them, one has to face certain new difficulties  due to  the appearance of the Lavrentiev phenomenon caused by time-dependence.

By combining dynamical and PDE approaches,  we provide a {\it representation formula} of the viscosity solution of the evolutionary equation, which can be referred to as an implicit Lax-Oleinik semigroup.
As its applications, we obtain a necessary and sufficient condition for the existence of the viscosity solutions of the stationary equations.  It is well known that the comparison theorem plays a central role in the viscosity solution theory. We prove a new comparison result depending on a  neighborhood of the projected Aubry set essentially. An example is constructed to show that the requirement of the neighborhood  is {\it necessary}   for a special class of Hamilton-Jacobi equations that do not satisfy  the ``proper" condition introduced in  \cite{CHL2}. Comparably, the viscosity solution is determined completely by the projected Aubry set itself for the ``proper" cases (\cite[Theorem 1.6]{WWY4}).

Throughout this paper,  we assume $M$ is a closed (compact without boundary), connected and smooth Riemannian manifold and $H:T^*M\times\mathbb R\rightarrow\mathbb R$ satisfies
\begin{itemize}
\item [\textbf{(C):}] $H(x,u,p)$ is continuous;

\item [\textbf{(CON):}] $H(x,u,p)$ is convex in $p$, for any $(x,u)\in M\times \mathbb R$;

\item [\textbf{(CER):}] $H(x,u,p)$ is coercive in $p$, i.e. $\lim_{\|p\|\rightarrow +\infty}(\inf_{x\in M}H(x,0,p))=+\infty$;

\item [\textbf{(LIP):}] $H(x,u,p)$ is Lipschitz in $u$, uniformly with respect to $(x,p)$, i.e., there exists $\lambda>0$ such that $|H(x,u,p)-H(x,v,p)|\leq \lambda|u-v|$, for all $(x,p)\in\ T^*M$ and all $u,v\in\mathbb R$.
\end{itemize}
Correspondingly, one has the  Lagrangian associated to $H$:
\begin{equation*}
  L(x,u,\dot x):=\sup_{p\in T^*_xM}\{\langle \dot x,p\rangle-H(x,u,p)\}.
\end{equation*}
A list of notations is provided at the end of this section.
\begin{remark}\label{coerh}
Due to the absence of superlinearity of $H$, the  Lagrangian $L$ may take the value $+\infty$. Define
\[\text{dom}(L):=\{(x,\dot{x},u)\in TM\times\R\ |\ L(x,u,\dot{x})<+\infty\}.\]
Then $L$ satisfies the following properties (see \cite[Proposition 2.7]{gen} for instance)
\begin{itemize}
\item [\textbf{(LSC):}] $L(x,u,\dot x)$ is lower semicontinuous, and continuous on the interior of dom($L$);

\item [\textbf{(CON):}] $L(x,u,\dot x)$ is convex in $\dot x$, for any $(x,u)\in M\times \mathbb R$;

\item [\textbf{(LIP):}] $L(x,u,\dot x)$ is Lipschitz in $u$, uniformly with respect to $(x,\dot x)$, i.e., there exists $\lambda>0$ such that $|L(x,u,\dot x)-L(x,v,\dot x)|\leq \lambda|u-v|$, for all $(x,\dot x,u)\in\text{dom}(L)$.
\end{itemize}
\end{remark}

\begin{remark}
\
\begin{itemize}
\item [(1)]  For convenience, we denote $(x,p)\in T^*M$, $u\in\mathbb{R}$, by $(x,u,p)\in T^*M\times \mathbb{R}$.
\item [(2)] The assumption (CER) is equivalent to the following statement: for each $R>0$, there exists $K>0$ such that for any $|u|<R$ and $\|p\|>K$, we have $H(x,u,p)>R$. In fact, by (CER), for each $R>0$, there exists $K>0$ such that for $\|p\|>K$, $H(x,0,p)>(1+\lambda)R$. By (LIP), for any $|u|<R$,
    \[H(x,u,p)\geq H(x,0,p)-\lambda |u|>R.\]
    The converse direction is obvious.
    \item [(3)]  dom($L$) is independent of $u$. More precisely, given $(x,\dot{x})\in TM$, if $L(x,u_0,\dot{x})<+\infty$ for a given $u_0\in \R$, then for any $u\in \R$,
        \begin{equation*}
        \begin{aligned}
        L(x,u,\dot{x})&\leq \sup_{p\in T^*_xM}\{\langle \dot x,p\rangle-H(x,u_0,p)\}+\lambda|u-u_0|
        \\ &=L(x,u_0,\dot{x})+\lambda|u-u_0|<+\infty.
        \end{aligned}
        \end{equation*}
        Thus, there holds
        \[\text{dom}(L)=\{(x,\dot{x})\in TM\ |\ L(x,0,\dot{x})<+\infty)\}\times\R.\]
\end{itemize}
\end{remark}

\subsection{An implicit Lax-Oleinik semigroup}

Consider the viscosity solution of the Cauchy problem
\begin{equation}\tag{$CP_H$}\label{C}
  \left\{
   \begin{aligned}
   &\partial_t u(x,t)+H(x,u(x,t),\partial_xu(x,t))=0,\quad (x,t)\in M\times(0,+\infty).
   \\
   &u(x,0)=\varphi(x),\quad x\in M.
   \\
   \end{aligned}
   \right.
\end{equation}
We have the following result.

\begin{theorem}\label{m1}
Assume $H:T^*M\times\mathbb R\rightarrow\mathbb R$ satisfies (C)(CON)(CER)(LIP). The following implicit backward Lax-Oleinik semigroup $T_t^-:C(M)\to C(M)$, via
\begin{equation}\tag{T-}\label{T-}
  T^-_t\varphi(x)=\inf_{\gamma(t)=x} \left\{\varphi(\gamma(0))+\int_0^tL(\gamma(\tau),T^-_\tau\varphi(\gamma(\tau)),\dot{\gamma}(\tau)){d}\tau\right\}
\end{equation}
is well-defined. The infimum  is taken among absolutely continuous curves $\gm:[0,t]\rightarrow M$ with $\gamma(t)=x$. Moreover,
\begin{itemize}
\item [(i)]
if   $\varphi$ is  continuous, then $ u(x,t):=T^-_t\varphi(x)$ represents the unique  continuous viscosity solution of (\ref{C});
 \item [(ii)] if $\varphi$ is Lipschitz continuous, then $u(x,t):=T^-_t\varphi(x)$ is also locally Lipschitz continuous on $M\times [0,+\infty)$.
 \end{itemize}
\end{theorem}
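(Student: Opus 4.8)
The plan is to separate the \emph{implicit} character of \eqref{T-} from the low regularity of $H$ by a two-layer construction. For the inner layer I would fix a continuous potential $w\in C([0,T]\times M)$ — a candidate for the unknown $(\tau,x)\mapsto T^-_\tau\varphi(x)$ — and consider the \emph{frozen} Lax--Oleinik operator
\[
(\mathcal{L}^{w}_{t}\varphi)(x):=\inf_{\gamma(t)=x}\Big\{\varphi(\gamma(0))+\int_0^{t}L\big(\gamma(\tau),w(\tau,\gamma(\tau)),\dot\gamma(\tau)\big)\,d\tau\Big\},
\]
the infimum being over absolutely continuous $\gamma:[0,t]\to M$ with $\gamma(t)=x$ along which the integral is finite. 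For the outer layer I would observe that $u(\tau,x):=T^-_\tau\varphi(x)$ is precisely a fixed point of the map $\Phi:w\mapsto\big((\tau,x)\mapsto(\mathcal{L}^{w}_{\tau}\varphi)(x)\big)$, and that $\Phi$ is a contraction for the weighted norm $\|w\|_{K}:=\sup_{0\le\tau\le T}e^{-K\tau}\|w(\tau,\cdot)\|_{\infty}$ as soon as $K>\lambda$: since $\mathrm{dom}(L)$ does not depend on $u$, the two functionals being minimized have the same admissible curves, and (LIP) for $L$ gives $|(\mathcal{L}^{w}_{\tau}\varphi)(x)-(\mathcal{L}^{w'}_{\tau}\varphi)(x)|\le\lambda\int_0^{\tau}\|w(s,\cdot)-w'(s,\cdot)\|_{\infty}\,ds$. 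The Banach fixed point theorem then yields, for each $T>0$, a unique continuous fixed point, and uniqueness makes these consistent as $T$ grows; this is how I would obtain that $T^-_t\varphi$ is well defined and that $(x,t)\mapsto T^-_t\varphi(x)$ is continuous — granted, of course, that $\mathcal{L}^{w}$ is well defined, finite, continuous in $(x,t)$, and maps $C([0,T]\times M)$ into itself.

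The real content thus sits in the inner layer, which is a time-dependent, \emph{non-superlinear} Lax--Oleinik theory for the Lagrangian $(\tau,x,\dot x)\mapsto L(x,w(\tau,x),\dot x)$ and the Hamiltonian $(\tau,x,p)\mapsto H(x,w(\tau,x),p)$; here I would adapt the approach of \cite{gen}. Finiteness is the easy part: the bound $L(x,u,\dot x)\ge -H(x,u,0)$ (take $p=0$ in the definition of $L$), together with (C), (LIP) and compactness of $M$, gives $L\ge -A-\lambda|u|$, so a Gronwall-type estimate bounds $\mathcal{L}^{w}_{t}\varphi$ from below uniformly, while the constant curve $\gamma\equiv x$ — admissible because $(x,0)\in\mathrm{dom}(L)$, since $L(x,u,0)=-\min_{p}H(x,u,p)$ is finite by (CER) and (C) — furnishes a uniform upper bound. \textbf{The main obstacle} is the continuity of $(x,t)\mapsto\mathcal{L}^{w}_{t}\varphi(x)$, the identification of $\mathcal{L}^{w}_{t}\varphi$ as the viscosity solution of $\partial_t v+H(x,w,\partial_x v)=0$ with datum $\varphi$, and the stability of $\mathcal{L}^{w}$ under uniform convergence of $w$. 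Without superlinearity the infimum defining $\mathcal{L}^{w}$ may not be attained, effective near-minimizers need not be uniformly Lipschitz, and the Lavrentiev phenomenon may occur, so the passage from the dynamic programming inequalities to the viscosity \emph{supersolution} property must be carried out using only the lower semicontinuity and convexity of $L$ and the coercivity of $H$ — through the duality $H(x,u,p)=\sup_{\dot x}\{\langle p,\dot x\rangle-L(x,u,\dot x)\}$ — rather than by extracting convergent subsequences of minimizers.

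Granting the inner layer, the fixed point $u(t,x)=T^-_t\varphi(x)$ is continuous, satisfies \eqref{T-} by construction, and inherits the semigroup identity $T^-_{t+s}=T^-_t\circ T^-_s$ from the uniqueness of fixed points; substituting $w=u$ back into the frozen equation identifies $u$ with the viscosity solution of $\partial_t u+H(x,u,\partial_x u)=0$, that is, of \eqref{C}, and the Gronwall bounds of the previous step give $u(\cdot,0)=\varphi$ and continuity up to $t=0$. For uniqueness I would use either the classical comparison theorem for Cauchy problems whose Hamiltonian is continuous and Lipschitz in $u$ (as in \cite{Intr}), or — to stay self-contained — a verification argument: any continuous viscosity solution $w$ of \eqref{C} satisfies the same dynamic programming principle (one inequality because $w$ is a subsolution, the other because $w$ is a supersolution tested along near-minimizing curves), hence $w=T^-_t\varphi$.

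Finally, for the regularity statement I would first record the contraction estimate $\|T^-_t\varphi-T^-_t\psi\|_{\infty}\le e^{\lambda t}\|\varphi-\psi\|_{\infty}$, which is immediate from \eqref{T-}, (LIP) and Gronwall. If $\varphi$ is Lipschitz, then coercivity of $H$ confines the velocities relevant to effective near-minimizers over bounded time intervals, and a standard doubling-of-variables argument for the Cauchy problem, combined with the semigroup identity $T^-_{t+s}\varphi=T^-_s(T^-_t\varphi)$ and the spatial Lipschitz bound for $t>0$, upgrades the above to local Lipschitz continuity of $(x,t)\mapsto T^-_t\varphi(x)$ on $M\times[0,+\infty)$. (The merely continuous case is already covered by the construction, but it can also be recovered by approximating $\varphi$ uniformly by Lipschitz functions and invoking the contraction estimate together with the stability of viscosity solutions under uniform limits.)
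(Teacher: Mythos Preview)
Your outer layer is essentially the paper's: the paper also freezes the $u$-argument and iterates, setting $u_0=\varphi$ and
\[
u_k(x,t)=\inf_{\gamma(t)=x}\Big\{\varphi(\gamma(0))+\int_0^tL(\gamma(\tau),u_{k-1}(\gamma(\tau),\tau),\dot\gamma(\tau))\,d\tau\Big\},
\]
which is Picard iteration for your map $\Phi$; the contraction estimate you write down is exactly Lemma~\ref{begin}(i). So the skeletons coincide.

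The substance is in the inner layer, and here there is a real gap. You propose to run the frozen problem for an \emph{arbitrary continuous} potential $w$ and to ``adapt \cite{gen}''. But \cite{gen} is time-independent; once $w$ enters, the frozen Lagrangian $L(x,w(\tau,x),\dot x)$ is genuinely $\tau$-dependent, and you yourself flag the Lavrentiev phenomenon. You then say you will prove the supersolution property and continuity of $(x,t)\mapsto\mathcal L^w_t\varphi(x)$ without minimizers, ``through the duality''. This is the step that is not justified: for a merely continuous $w$ one has no a priori Lipschitz bound on near-minimizers near the terminal time, and without such a bound neither the supersolution test (which in the convex/coercive setting is normally carried by Jensen along a minimizer with bounded speed) nor the local Lipschitz continuity in $(x,t)$ follows from the ingredients you list. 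The paper confronts exactly this point, and its solution is specific rather than generic: it (i) first restricts to \emph{Lipschitz} potentials (this is why Lipschitz $\varphi$ is treated first, so that each iterate $u_{k-1}$ is Lipschitz), (ii) approximates $H$ by the superlinear $H_n(x,u,p)=H(x,u,p)+\max\{\|p\|^2-n^2,0\}$ so that minimizers exist classically, (iii) uses the variational inequality of Bettiol--Mariconda \cite{Bet} (condition (Lt), which needs Lipschitz $t$-dependence of the frozen Lagrangian) to get \emph{uniform Lipschitz} regularity of the minimizers of $\mathcal L^{u_{k-1}}_t\varphi$, and (iv) passes to the limit in $n$ via $\Gamma$-convergence (Lemma~\ref{Bol}). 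The equi-Lipschitz control of the iterates $u_k$ themselves is obtained not by doubling of variables but by explicit barrier sub/supersolutions (the functions $\bar w,\tilde w$ in Lemma~\ref{begin}(ii)) and comparison. The continuous-$\varphi$ case is then recovered exactly as in your last parenthetical, via the $e^{\lambda t}$ contraction and stability.

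In short: your fixed-point packaging is fine and matches the paper, but the frozen problem with a merely continuous $w$ is not known to have the properties you need, and ``adapt \cite{gen}'' does not cover the $t$-dependent, coercive-but-not-superlinear case. The paper's route---keep $w$ Lipschitz, approximate by superlinear $H_n$, invoke \cite{Bet} for Lipschitz minimizers, and use explicit barriers for the equi-Lipschitz estimate---is precisely the missing mechanism.
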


The main  difficulties to prove Theorem \ref{m1} are stated as follows.
\begin{itemize}
\item Compared to the contact HJ equation under the Tonelli conditions, the contact Hamilton flow can not be defined. Consequently, we do not have the compactness of the minimizing orbit set, which plays a crucial role in the  previous work on contact HJ equations (see \cite[Lemma 2.1]{Wa2}).
    \item Compared to the classical HJ equation in less regular cases (see \cite{gen,Fa}), the backward Lax-Oleinik semigroup is implicit defined, which causes $t$-dependence of the Lagrangian. Due to the Lavrentiev phenomenon, it is not direct to prove the Lipschitz continuity of the minimizers of $T^-_t\varphi(x)$ (see \cite{ball} for various counterexamples).
\end{itemize}

\begin{remark}\label{tplus}
Similar to Theorem \ref{m1}, the forward Lax-Oleinik semigroup can be defined as
\begin{equation}\label{T+eq}\tag{T+}
  T^+_t\varphi(x)=\sup_{\gamma(0)=x}\left\{\varphi(\gamma(t))-\int_0^tL(\gamma(\tau),T^+_{t-\tau}\varphi(\gamma(\tau)),\dot{\gamma}(\tau))d\tau\right\}.
\end{equation}
Use the same argument as \cite[Proposition 2.8]{Wa3}, one has $T^+_t\varphi:=-\bar T^-_t(-\varphi)$, where $\bar T^-_t$ denotes the backward Lax-Oleinik semigroup associated to $L(x,-u,-\dot{x})$.
\end{remark}
 By Theorem  \ref{m1}, if the fixed points of $T^-_t$ exist, then they  are viscosity solutions of \begin{equation}\label{E}\tag{$E_H$}
  H(x,u(x),\partial_x u(x))=0.
\end{equation}

Recently, an alternative  variational formulation  was provided in \cite{CCWY,CCJWY,LTW}  in light of G. Herglotz's work \cite{her}, which is related to nonholonomic constraints. By using the Herglotz variational principle, various kinds of representation formulae for the viscosity solutions of (\ref{evll}) were also obtained in  \cite{HCHZ}. For simplicity, we will omit the word ``viscosity" if it is not necessary to be mentioned.
\subsection{An  existence result for the solutions of (\ref{E})}

\begin{remark}\label{Perron}
Let $H:T^*M\times\mathbb R\rightarrow\mathbb R$ satisfy (C)(CER)(LIP). According to the  Perron method \cite{Ish}, if
(\ref{E})
has a Lipschitz subsolution $f$ and a Lipschitz  supersolution $g$ such that $f\leq g$. Then the equation (\ref{E}) admits a Lipschitz viscosity solution.
\end{remark}

In light of  \cite{Ish}, we introduce another necessary and sufficient  condition for (\ref{E}) to admit solutions.
\begin{theorem}\label{S}
Let $H:T^*M\times\mathbb R\rightarrow\mathbb R$ satisfy (C)(CON)(CER)(LIP). The following statements are equivalent:
\begin{itemize}
\item [(1)] (\ref{E}) admits Lipschitz  solutions;
\item [(2)] There exist two continuous functions $\varphi$ and $\psi$ such that $T^-_t\varphi\geq C_1$ and $T^-_t\psi\leq C_2$, where $C_1, C_2$ are constant  independent of $t$ and $x$;

\item [(3)] There exist two continuous functions $\varphi$ and $\psi$, and  $t_1$, $t_2>0$ such that $T^-_{t_1}\varphi\geq \varphi$ and $T^-_{t_2}\psi\leq \psi$.
\end{itemize}
\end{theorem}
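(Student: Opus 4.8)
The plan is to prove the cycle of implications $(1)\Rightarrow(2)\Rightarrow(3)\Rightarrow(1)$, using Theorem \ref{m1} together with basic monotonicity and semigroup properties of $T^-_t$. First, $(1)\Rightarrow(2)$: if $u$ is a Lipschitz viscosity solution of $(E_H)$, then by Theorem \ref{m1} the function $(x,t)\mapsto u(x)$ is the viscosity solution of $(CP_H)$ with initial datum $\varphi=\psi=u$, hence $T^-_t u = u$ for all $t\ge 0$. Since $M$ is compact and $u$ is continuous, $u$ is bounded, so taking $\varphi=\psi=u$, $C_1=\min_M u$ and $C_2=\max_M u$ gives (2). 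Next, $(2)\Rightarrow(3)$: here I would use that $T^-_t$ is order preserving (immediate from the variational formula \eqref{T-} together with (LIP): if $\varphi_1\le\varphi_2$ then the implicit Lagrangian terms can be compared via a Gronwall-type argument in the defining recursion, exactly as in \cite{Wa2}). Given $\varphi$ with $T^-_t\varphi\ge C_1$, apply $T^-_s$ to both sides of $T^-_t\varphi\ge C_1$; one also needs that $T^-_s$ of a constant $C_1$ is bounded below uniformly in $s$ — this follows because constants are sub/supersolutions up to controlled error under (LIP)(CER), or more directly because $T^-_s C_1$ stays in a bounded band by the same Gronwall estimate. Then for a suitable large $t_1$ one arranges $T^-_{t_1}\varphi\ge \varphi$ after possibly replacing $\varphi$ by a constant; symmetrically for $\psi$. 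The cleanest route is: since $T^-_t\varphi\ge C_1$, the constant function $C_1$ satisfies $T^-_{t}C_1 \ge C_1 - o(1)$-type control is not quite enough, so instead I would take the new test function to be $\tilde\varphi := \inf_{t\ge 0} T^-_t\varphi$ when this is finite (which it is, being $\ge C_1$ and $\le T^-_0\varphi=\varphi$), and check $T^-_{t_1}\tilde\varphi \ge \tilde\varphi$ using continuity of $t\mapsto T^-_t$ and the semigroup property.

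The substantive implication is $(3)\Rightarrow(1)$, and this is where I expect the main obstacle. The strategy is to manufacture a Lipschitz subsolution and a Lipschitz supersolution of $(E_H)$ and then invoke the Perron method as recorded in Remark \ref{Perron}. Starting from $T^-_{t_1}\varphi\ge\varphi$ with $\varphi$ continuous: by monotonicity and the semigroup property, the sequence $T^-_{nt_1}\varphi$ is nondecreasing in $n$; one must show it is bounded above. Boundedness should follow from (CER): coercivity forces $L(x,u,\dot x)$ to grow, so $T^-_t\varphi$ cannot run off to $+\infty$ uniformly — more precisely, any constant $c$ large enough is a supersolution-type barrier, giving $T^-_t\varphi\le c$. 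Hence $v:=\lim_{n\to\infty}T^-_{nt_1}\varphi$ exists, is bounded, and by the contraction/equi-Lipschitz estimates from Theorem \ref{m1} (the $T^-_t\varphi$ are eventually uniformly Lipschitz for $t$ bounded away from $0$) the limit $v$ is Lipschitz and satisfies $T^-_{t_1}v=v$. It remains to upgrade the fixed-point relation at the single time $t_1$ to $T^-_t v=v$ for all $t$, equivalently to show $v$ is a viscosity solution of $(E_H)$: this uses that $t\mapsto T^-_t v(x)$ is monotone (from $T^-_{t_1}v=v$ and the semigroup law one deduces $T^-_t v$ is periodic-plus-monotone, hence constant in $t$), after which Theorem \ref{m1} identifies $v$ as a viscosity solution of $(E_H)$. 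Symmetrically, $T^-_{t_2}\psi\le\psi$ produces a Lipschitz solution $w$ with $T^-_t w=w$; in fact either $v$ or $w$ alone already solves $(E_H)$, so (1) follows. (If one instead wants to literally apply Remark \ref{Perron}, take $v$ as a subsolution, note $w$ is a supersolution, and if $v\le w$ fails replace them by $v\wedge(\text{const})$ and $w\vee(\text{const})$ using (LIP) to keep the sub/supersolution property.)

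The main obstacle, as flagged, is the passage to the limit and the upgrade from the one-time fixed point to a genuine time-independent viscosity solution: one must control the limit in a topology strong enough to preserve the viscosity property, which is exactly where the local equi-Lipschitz bounds for $T^-_t\varphi$ on $M\times[\delta,\infty)$ from Theorem \ref{m1} — themselves the technical heart of the paper — get used, together with the stability of viscosity solutions under uniform convergence. A secondary technical point is establishing the uniform upper/lower bounds for $T^-_t$ of a bounded function purely from (C)(CON)(CER)(LIP); I would derive these from the elementary observation that for $c$ a sufficiently large constant, $H(x,c,0)\ge 0$ and $H(x,-c,0)\le 0$ (by (CER) applied at $p=0$ is vacuous, so rather by noting $\min_x H(x,0,0)$ is finite and using (LIP) to shift in $u$), making large positive and large negative constants super- and subsolutions respectively, hence barriers for $T^-_t$ via the comparison principle for $(CP_H)$ already contained in Theorem \ref{m1}.
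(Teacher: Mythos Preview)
Your cycle $(1)\Rightarrow(2)\Rightarrow(3)\Rightarrow(1)$ is a reasonable scaffold, and the trivial direction $(1)\Rightarrow(2)$ is fine. But the heart of the argument, the step $(3)\Rightarrow(1)$, has a genuine gap that cannot be repaired along the lines you suggest.

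The gap is your claim that the nondecreasing sequence $T^-_{nt_1}\varphi$ is bounded above, justified by ``any constant $c$ large enough is a supersolution-type barrier'' and, later, ``for $c$ a sufficiently large constant, $H(x,c,0)\ge 0$''. Neither of these holds under the standing hypotheses. Coercivity (CER) is only in $p$ and says nothing at $p=0$; and (LIP) allows $H$ to be \emph{strictly decreasing} in $u$ (the case (STD) treated in the paper), in which event $H(x,c,0)\to -\infty$ as $c\to+\infty$ and large constants are subsolutions, not supersolutions. So there is no a priori upper barrier, and $T^-_{nt_1}\varphi$ may well diverge to $+\infty$. Symmetrically, $T^-_{nt_2}\psi$ may diverge to $-\infty$. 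Neither $\varphi$ alone nor $\psi$ alone suffices; the whole point (and the stated novelty of the theorem) is that one must \emph{combine} the two pieces of information, with no assumed ordering between the bounds $C_1$ and $C_2$.

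The paper's route is different and addresses exactly this issue. First it observes that $(3)\Rightarrow(2)$ is easy by monotonicity: $T^-_{t_1}\varphi\ge\varphi$ gives $T^-_t\varphi\ge T^-_r\varphi$ for $t=nt_1+r$, hence a $t$-independent lower bound; likewise for $\psi$. Then for $(2)\Rightarrow(1)$ it proves a nontrivial lemma (Lemma~\ref{S3}): given $T^-_t\varphi_1\ge C_1$ and $T^-_t\varphi_2\le C_2$, there exists a \emph{constant} $\bar\varphi$ with $|T^-_t\bar\varphi|\le C_3$ for all $t$. The argument introduces
\[
A^*:=\inf\{A\in\mathbb R:\ \exists\, t_A>0\ \text{with}\ T^-_{t_A}A\ge A\},
\]
shows $A^*<+\infty$ using a dichotomy lemma (Lemma~\ref{S2}: if $T^-_t\varphi$ is unbounded above, then for every $c$ there is $t_c$ with $T^-_{t_c}\varphi>\varphi+c$), and then treats the cases $A^*>-\infty$ and $A^*=-\infty$ by contradiction, playing the $e^{\lambda t}$-continuity of $T^-_t$ in the initial datum (Proposition~\ref{psg}(2)) against the strict inequalities coming from Lemma~\ref{S2}. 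Once a bounded orbit $T^-_t\bar\varphi$ is found, the lower half-limit $\check{\bar\varphi}$ is a Lipschitz viscosity solution (Remark~\ref{S1}), which gives~(1).

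Two further remarks. First, your $(2)\Rightarrow(3)$ is also shaky: $\tilde\varphi:=\inf_{t\ge 0}T^-_t\varphi$ need not be continuous, and the monotonicity you need to conclude $T^-_{t_1}\tilde\varphi\ge\tilde\varphi$ is not available. The paper avoids this by proving the easy direction $(3)\Rightarrow(2)$ instead. Second, your plan to upgrade a one-time fixed point $T^-_{t_1}v=v$ to $T^-_t v=v$ for all $t$ is not justified: $t_1$-periodicity of $t\mapsto T^-_t v$ follows from the semigroup property, but there is no monotonicity in $t$ to force constancy. The paper sidesteps this entirely by using the lower half-limit construction, which produces a genuine viscosity solution directly.
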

If (\ref{E}) admits a solution denoted by $u$, one can take $u$ as the initial function. The statement (2) and (3) hold true obviously. Thus, we only need to show the opposite direction, which will be proved in Section \ref{secS}. The main novelty of Theorem \ref{S} is that  the lower bound of  $T^-_t\varphi$ is not required to  be less than the upper bound of $T^-_t\psi$.

\subsection{The Aubry set}

We denote by $\mathcal S_-$ and $\mathcal S_+$  the set of all backward weak KAM solutions and the set of all forward weak KAM solutions of (\ref{E}) respectively. See Appendix \ref{swv} for their definitions and relations with viscosity solutions. In the discussion below, we need to introduce the following assumption
\begin{itemize}
\item [\textbf{(S):}]   the contact HJ equation (\ref{E}) admits a  solution.
\end{itemize}

\begin{definition}
Let $u_-\in \mathcal{S}_-$, $u_+\in \mathcal{S}_+$. We define the projected Aubry set with respect to $u_-$ by
\begin{equation*}
  \mathcal I_{u_-}:=\{x\in M:\ u_-(x)=\lim_{t\rightarrow +\infty}T^+_tu_-(x)\}.
\end{equation*}
Similarly, we define the projected Aubry set with respect to $u_+$ by
\begin{equation*}
  \mathcal I_{u_+}:=\{x\in M:\ u_+(x)=\lim_{t\rightarrow +\infty}T^-_tu_+(x)\}.
\end{equation*}
In particular, if $u_+(x)=\lim_{t\rightarrow +\infty}T^+_tu_-(x)$ and $u_-(x)=\lim_{t\rightarrow +\infty}T^-_tu_+(x)$, then
\[\mathcal I_{u_-}=\mathcal I_{u_+}.\]
In this special case above, we denote the sets $\mathcal I_{u_-}$ and $\mathcal I_{u_+}$  by $\mathcal I_{(u_-,u_+)}$, following the notation introduced by Fathi \cite{Fa08}.
\end{definition}

\begin{theorem}\label{m2'}
Assume $H:T^*M\times\mathbb R\rightarrow\mathbb R$ satisfies (C)(CON)(CER)(LIP) and (S). Then
\begin{itemize}
\item [(1)] for each $u_-\in\mathcal S_-$, the limit function $U(x):=\lim_{t\rightarrow +\infty}T^+_tu_-(x)$ exists and it is a forward weak KAM solution of (\ref{E});
 \item [(2)] for each $u_+\in\mathcal S_+$, the limit function $V(x):=\lim_{t\rightarrow +\infty}T^-_tu_+(x)$ exists and it is a backward weak KAM solution of (\ref{E});
 \item [(3)]  both  $\mathcal I_{u_-}$ and $\mathcal I_{u_+}$ are nonempty.
 \end{itemize}
\end{theorem}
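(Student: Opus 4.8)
The plan is to prove (1) by showing that $t\mapsto T^+_tu_-$ is non-increasing and bounded, so that it converges, and that its limit is a fixed point of the semigroup; (2) will then follow from a recurrence argument along calibrated curves. As a preliminary reduction, note that since $u_-\in\mathcal S_-$ is a viscosity solution of \eqref{E}, the function $(x,t)\mapsto u_-(x)$ solves \eqref{C} with initial datum $u_-$, so by the uniqueness assertion of Theorem \ref{m1} one has $T^-_tu_-=u_-$ for all $t\ge0$; moreover $u_-$ is Lipschitz, and being a subsolution of \eqref{E} it is dominated, i.e. $u_-(\gm(b))-u_-(\gm(a))\le\int_a^bL(\gm(s),u_-(\gm(s)),\dot\gm(s))\,ds$ for every absolutely continuous curve $\gm$.

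The first main step would be to show $T^+_tu_-\le u_-$ for every $t\ge0$. Fix $x$ and a curve $\gm$ with $\gm(0)=x$ that realizes (or nearly realizes) the supremum in \eqref{T+eq}, and put $\tilde h(s):=T^+_{t-s}u_-(\gm(s))$ and $g(s):=u_-(\gm(s))$. The dynamic programming principle for $T^+$ along $\gm$ yields $\tilde h(s')-\tilde h(s)\le\int_s^{s'}L(\gm,\tilde h,\dot\gm)$ for $s\le s'$, with equality along an optimal curve, so $\dot{\tilde h}=L(\gm,\tilde h,\dot\gm)$ a.e.; domination of $u_-$ gives $\dot g\le L(\gm,g,\dot\gm)$ a.e.; and $\tilde h(t)=g(t)=u_-(\gm(t))$. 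Hence $\Delta:=g-\tilde h$ satisfies $\dot\Delta\le\lambda|\Delta|$ a.e. and $\Delta(t)=0$, and a comparison argument (wherever $\Delta<0$ one has $\tfrac{d}{ds}(e^{\lambda s}\Delta)\le0$, so a negative value of $\Delta$ cannot return to $0$ by time $t$) forces $\Delta\ge0$ on $[0,t]$; in particular $\Delta(0)=u_-(x)-T^+_tu_-(x)\ge0$. Since $T^+_t$ is order preserving (a consequence of the comparison principle for \eqref{C}, combined with Remark \ref{tplus}), the semigroup property then gives $T^+_{t+s}u_-=T^+_t(T^+_su_-)\le T^+_tu_-$, so $t\mapsto T^+_tu_-$ is non-increasing.

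Next I would obtain the limit. The a priori estimates used in the proof of Theorem \ref{m1} show that $\{T^+_tu_-\}_{t\ge0}$ is equi-Lipschitz with a modulus independent of $t$; together with $T^+_tu_-\le u_-$ and a lower bound — this is the delicate point, where one rules out $T^+_tu_-\to-\infty$ by exploiting that $u_-$ is a solution of \eqref{E}, while coping with the lack of compactness of minimizing curves and the Lavrentiev phenomenon mentioned in the Introduction — one gets a uniform bound, so by Dini's theorem $T^+_tu_-$ converges uniformly as $t\to+\infty$ to a Lipschitz function $u_+$. Since $T^+_s$ is continuous for uniform convergence and $T^+_{s+t}u_-=T^+_s(T^+_tu_-)$, letting $t\to+\infty$ gives $T^+_su_+=u_+$ for all $s$, hence $u_+$ is a forward weak KAM solution and $\mathcal S_+\neq\emptyset$. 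For the remaining assertion of (1), I would apply everything above to the Hamiltonian $\bar H(x,u,p):=H(x,-u,-p)$, which again satisfies (C)(CON)(CER)(LIP) and whose backward weak KAM solutions are exactly $-\mathcal S_+$, hence nonempty: for $u_+\in\mathcal S_+$ one has $-u_+$ a backward weak KAM solution of $\bar H$, so $\lim_t\bar T^+_t(-u_+)$ exists and is a forward weak KAM solution of $\bar H$; using $T^-_t=-\bar T^+_t(-\cdot)$ (the forward–backward counterpart of Remark \ref{tplus}), this says exactly that $\lim_tT^-_tu_+$ exists and is a backward weak KAM solution of \eqref{E}.

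For part (2), by the above $u_+=\lim_tT^+_tu_-\le u_-$ (and dually $T^-_tu_+\ge u_+$). I would take a backward calibrated curve $\gm:(-\infty,0]\to M$ for $u_-$, whose existence follows from $T^-_tu_-=u_-$ for all $t$ by extracting a subsequential limit of minimizing curves, compactness of $M$ handling the curves themselves; picking $t_n\to+\infty$ with $\gm(-t_n)\to z$, one combines calibration of $u_-$ with the monotone convergence $T^+_ru_-\downarrow u_+$ and the equi-Lipschitz bound to conclude $T^+_ru_-(z)=u_-(z)$ for every $r\ge0$, i.e. $z\in\mathcal I_{u_-}$; and the nonemptiness of $\mathcal I_{u_+}$ follows by the dual argument applied to forward calibrated curves of $u_+$ (equivalently, by applying the $\mathcal I_{u_-}$ statement to $\bar H$). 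The hardest part throughout is the uniform Lipschitz and boundedness estimates for $\{T^+_tu_-\}$ in the convergence step, and the companion construction of calibrated curves in part (2): unlike the Tonelli setting there is no contact flow, no compactness of minimizers, and the implicit time-dependent Lagrangian produces a Lavrentiev phenomenon, so these must be handled by the mixed dynamical–PDE method (and the variational inequality of \cite{Bet}) already developed for Theorem \ref{m1}, rather than by the classical compactness arguments of \cite{gen,Fa,Wa2,Wa3,Wa4}.
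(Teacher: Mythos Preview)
Your overall architecture matches the paper's: prove $T^+_tu_-\le u_-$ via a Gronwall comparison, deduce monotonicity, obtain a lower bound, apply Dini, identify the limit as a fixed point of $T^+_t$, then use $\alpha$-limit points of calibrated curves for the Aubry set. The upper bound argument and the duality reduction via $\bar H$ are essentially the paper's Lemma~\ref{51}, Corollary~\ref{ti}, and Remark~\ref{tplus}.

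The genuine gap is the lower bound, which you flag as ``the delicate point'' but do not supply. The paper's mechanism is concrete and is also what drives part~(2), so you are missing one lemma that serves twice. Namely (Lemma~\ref{52}): if $\gamma_-:(-\infty,0]\to M$ is a $(u_-,L,0)$-calibrated curve, then
\[
T^+_t u_-\bigl(\gamma_-(-t)\bigr)=u_-\bigl(\gamma_-(-t)\bigr)\qquad\text{for every }t>0.
\]
This is proved by the same Gronwall trick you used for the upper bound, now run with $F(s)=u_-(\gamma_t(s))-T^+_{t-s}u_-(\gamma_t(s))$ along the reparametrized curve $\gamma_t(s)=\gamma_-(s-t)$. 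With this in hand, the lower bound (Lemma~\ref{53}) follows by connecting an arbitrary $x$ to $\gamma_-(-t+\mu)$ by a short geodesic $\alpha:[0,\mu]\to M$ with $\|\dot\alpha\|\le\delta$ (so that $L(\alpha,\cdot,\dot\alpha)$ is bounded via Lemma~\ref{CL}) and running one more Gronwall to get $T^+_tu_-(x)\ge -\|u_-\|_\infty-L_0\mu e^{\lambda\mu}$. No compactness of minimizers and no Lavrentiev issue enters here; those difficulties are confined to Theorem~\ref{m1}, and invoking them at this step is a red herring.

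Two smaller points. First, your equi-Lipschitz claim for $\{T^+_tu_-\}_{t\ge0}$ cannot come from the estimates in Theorem~\ref{m1}, which are on $M\times[0,T]$ and blow up with $T$; you need the uniform two-sided bound \emph{first} (so the argument is circular as written). The paper avoids this by identifying the pointwise decreasing limit with the upper half-relaxed limit $\hat u_+$ of Remark~\ref{S1}, which is already Lipschitz, and then applies Dini. Second, in part~(2) your phrase ``combine calibration of $u_-$ with the monotone convergence'' hides exactly the missing identity above: once you know $T^+_{t_n}u_-(\gamma_-(-t_n))=u_-(\gamma_-(-t_n))$, passing to the limit along $\gamma_-(-t_n)\to z$ and using uniform convergence of $T^+_tu_-$ to $u_+$ gives $u_+(z)=u_-(z)$, i.e.\ $z\in\mathcal I_{u_-}$ (Proposition~\ref{xt}). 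Without Lemma~\ref{52} neither the lower bound nor the Aubry set argument goes through.
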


\subsection{A  comparison result for the solutions of (\ref{E})}
In this part, we are concerned with further properties of the {viscosity solution  for a special class of Hamilton-Jacobi equations   that do not satisfy  the proper condition:
\[
H(x,r,p)\leqslant H(x,s,p)\ \ \text{whenever}\ r\leqslant s.
\]
We assume $H:T^*M\times\mathbb R\rightarrow\mathbb R$ satisfies (C), (CON), (CER), (LIP) and
\begin{itemize}
\item [\textbf{(STD):}] $H(x,u,p)$ is strictly decreasing in $u$.
\end{itemize}
Under the assumptions above, the  solution of $H(x,u,\partial_xu)=0$ is not unique (see e.g., Example (\ref{E0}) below). The following result provides a comparison among different  solutions.
\begin{theorem}\label{four3}
Let $v_1$, $v_2\in \mathcal{S}_-$.
\begin{itemize}
\item [(1)] If $v_1\leq v_2$, then $\emptyset\neq \mathcal I_{v_2}\subseteq \mathcal I_{v_1}$;

\item [(2)] If there is a neighborhood $\mathcal O$ of $\mathcal I_{v_2}$ such that $v_1|_{\mathcal O}\leq v_2|_{\mathcal O}$, then $v_1\leq v_2$ everywhere;

\item [(3)] If $\mathcal I_{v_1}=\mathcal I_{v_2}$ and $v_1|_{\mathcal O}=v_2|_{\mathcal O}$, then $v_1= v_2$ everywhere.
\end{itemize}
\end{theorem}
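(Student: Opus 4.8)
The plan is to obtain (3) from (2) applied twice, and to prove (1) and (2) from the dynamics of backward weak KAM solutions together with the one feature of (STD) I will exploit throughout: since (STD) in particular makes $H$ non-increasing in $u$, the Lagrangian $L(x,u,\dot x)$ is non-decreasing in $u$, so $v_1\le v_2$ forces $L(x,v_1,\dot x)\le L(x,v_2,\dot x)$ pointwise, while (LIP) gives $|L(x,v_1,\dot x)-L(x,v_2,\dot x)|\le\lambda|v_1-v_2|$. I will use three facts from the weak KAM theory of $\mathcal{S}_-$ (valid under (C)(CON)(CER)(LIP)(S)): (i) every $v\in\mathcal{S}_-$ is dominated by $L$, i.e. $v(\eta(b))-v(\eta(a))\le\int_a^bL(\eta(\tau),v(\eta(\tau)),\dot\eta(\tau))\,d\tau$ along every absolutely continuous curve $\eta$; (ii) through each point there is a backward $v$-calibrated curve $\gamma\colon(-\infty,0]\to M$ (equality in (i) on each subinterval), and the $\alpha$-limit set of $\gamma$ is contained in $\mathcal{I}_v$, which is closed; (iii) $x\in\mathcal{I}_v$ if and only if there are loops $\sigma_n\colon[0,T_n]\to M$ based at $x$ with $T_n\to+\infty$ and $\int_0^{T_n}L(\sigma_n(\tau),v(\sigma_n(\tau)),\dot\sigma_n(\tau))\,d\tau\to0$ (such a loop action being $\ge0$ by domination).

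For (1): the nonemptiness $\mathcal{I}_{v_2}\neq\emptyset$ is Theorem \ref{m2'}(2). For the inclusion I would take $x_0\in\mathcal{I}_{v_2}$ and loops $\sigma_n$ at $x_0$ with $T_n\to+\infty$ and $\int_0^{T_n}L(\sigma_n,v_2(\sigma_n),\dot\sigma_n)\,d\tau\to0$. Using $v_1\le v_2$, the monotonicity of $L$ in $u$, and the domination of $v_1$ along the loop $\sigma_n$ (whose endpoints coincide),
\[
0\le\int_0^{T_n}L\big(\sigma_n(\tau),v_1(\sigma_n(\tau)),\dot\sigma_n(\tau)\big)\,d\tau\le\int_0^{T_n}L\big(\sigma_n(\tau),v_2(\sigma_n(\tau)),\dot\sigma_n(\tau)\big)\,d\tau\ \to\ 0 ,
\]
so (iii) applied to $v_1$ gives $x_0\in\mathcal{I}_{v_1}$.

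For (2): I would fix $x_0\in M$ and let $\gamma\colon(-\infty,0]\to M$, $\gamma(0)=x_0$, be backward $v_2$-calibrated. Since $\alpha(\gamma)\subseteq\mathcal{I}_{v_2}$ and $\mathcal{I}_{v_2}$ is a compact subset of the (open) neighborhood $\mathcal{O}$, there is $s_0>0$ with $\gamma(-s_0)\in\mathcal{O}$; hence $\psi(-s_0)\le0$, where $\psi(\tau):=v_1(\gamma(\tau))-v_2(\gamma(\tau))$. Subtracting the $v_2$-calibration identity from the $v_1$-domination inequality along $\gamma|_{[a,b]}$, for $-s_0\le a\le b\le0$,
\[
\psi(b)-\psi(a)\le\int_a^b\Big(L\big(\gamma(\tau),v_1(\gamma(\tau)),\dot\gamma(\tau)\big)-L\big(\gamma(\tau),v_2(\gamma(\tau)),\dot\gamma(\tau)\big)\Big)\,d\tau\le\lambda\int_a^b\psi^+(\tau)\,d\tau ,
\]
where $\psi^+=\max\{\psi,0\}$: the integrand is $\le0$ on $\{\psi\le0\}$ by monotonicity of $L$ in $u$ and $\le\lambda\psi(\tau)$ on $\{\psi>0\}$ by (LIP). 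Combined with $\psi(-s_0)\le0$, Gronwall's inequality forces $\psi\le0$ on $[-s_0,0]$, so $v_1(x_0)\le v_2(x_0)$; as $x_0$ was arbitrary, $v_1\le v_2$ on $M$.

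Finally (3) is immediate: since $\mathcal{I}_{v_1}=\mathcal{I}_{v_2}$, the neighborhood $\mathcal{O}$ is a neighborhood of both projected Aubry sets, and $v_1|_{\mathcal{O}}=v_2|_{\mathcal{O}}$ gives both one-sided inequalities on $\mathcal{O}$; applying (2) to $(v_1,v_2)$ and then to $(v_2,v_1)$ yields $v_1\le v_2$ and $v_2\le v_1$ on $M$, i.e. $v_1=v_2$. I expect the real obstacle to lie not in this comparison scheme but in its inputs (i)--(iii): in the non-Tonelli regime the minimizing set is not compact and the Lavrentiev phenomenon occurs, so securing the existence and (Lipschitz) regularity of calibrated curves, the inclusion of their $\alpha$-limit sets in $\mathcal{I}_v$, and the loop/Peierls-barrier description of $\mathcal{I}_v$ is the delicate part; granting Theorem \ref{m1} and the regularity theory accompanying it, these should be available, after which the three assertions follow as above.
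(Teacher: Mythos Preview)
Your arguments for (2) and (3) are correct and match the paper's almost verbatim: take a $(v_2,L,0)$-calibrated curve $\gamma$ ending at $x$, use Proposition~\ref{xt} to place some $\gamma(-t_0)$ in $\mathcal{O}$, subtract domination of $v_1$ from calibration of $v_2$, and close with Gronwall. Your $\psi^+$ formulation is a minor cosmetic variant of the paper's ``last zero of $F$'' argument.

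For (1), however, you take a genuinely different route, and it has a gap. You invoke a loop/Peierls-barrier characterization (your input (iii)) of $\mathcal{I}_{v}$: $x\in\mathcal{I}_v$ iff there are based loops at $x$ with $T_n\to\infty$ and $L$-action tending to $0$. This characterization is \emph{not} established anywhere in the paper, and in the present non-Tonelli setting it is not a free consequence of Theorem~\ref{m1} or of the regularity of minimizers; in particular the ``only if'' direction you need (producing such loops from $x\in\mathcal{I}_{v_2}$) would require a recurrence statement for calibrated curves through Aubry points that the paper never proves. You flag this yourself as ``the delicate part,'' but you should be aware that the paper simply bypasses it.

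The paper's proof of (1) is much shorter and uses the one structural fact specific to (STD) that you did not exploit: by \cite[Theorem~3.2]{inc} the forward weak KAM solution $u_+$ is \emph{unique}, and $u_+\le v_-$ for every $v_-\in\mathcal{S}_-$ (Corollary~\ref{<u-} and Proposition~\ref{T+cu+}). Hence $\mathcal{I}_{v}=\{x:v(x)=u_+(x)\}$ for every $v\in\mathcal{S}_-$. If $x\in\mathcal{I}_{v_2}$ then $u_+(x)\le v_1(x)\le v_2(x)=u_+(x)$, so $v_1(x)=u_+(x)$ and $x\in\mathcal{I}_{v_1}$. This replaces your loop machinery by a two-line sandwich; the trade-off is that it relies on (STD) through the uniqueness of $u_+$, whereas your approach (if the loop characterization were available) would only need the monotonicity of $L$ in $u$.
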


In order to explain the necessity of the neighbourhood $\mathcal O$, we consider the following example
\begin{equation}\label{E0}\tag{E1}
  -\lambda u(x)+\frac{1}{2}|u'(x)|^2+V(x)=0,\quad x\in\mathbb S\simeq(-1,1],
\end{equation}
where $\mathbb S$ denotes a flat circle with the fundamental domain $(-1,1]$, and $V(x)$ is the restriction of $x^2/2$ on $\mathbb S$. Then \[H(x,u,p)=-\lambda u+\frac{1}{2}|p|^2+V(x)\]  is Lipschitz continuous. Assume $\lambda>2$, a direct calculation shows that  there are two  viscosity solutions given by
\begin{equation*}
  u_1(x)=\frac{\lambda+\sqrt{\lambda^2-4}}{2}V(x),\quad u_2(x)=\frac{\lambda-\sqrt{\lambda^2-4}}{2}V(x).
\end{equation*}
It can be shown that  $\mathcal I_{u_1}=\mathcal I_{u_2}=\{0\}$, although $u_1\neq u_2$ on $\mathbb S$.
A detailed analysis of Example (\ref{E0}) is given by Section \ref{exxxam} below.

\bigskip

The rest of this paper is organized as follows. In Section \ref{main1}, we prove Theorem  \ref{m1}. To achieve that, we need some technical lemmas whose proofs are given in Appendix \ref{apcc} and \ref{apbb}. Theorem  \ref{S}, Theorem \ref{m2'} and Theorem \ref{four3} are proved in Section \ref{secS}, Section \ref{pm2'} and Section \ref{pm27} successively.  In addition, we give some basic results on the existence and regularity of the minimizers of one dimensional variational problems in Appendix \ref{preli}, and we also provide some basic properties of weak KAM solution and viscosity solution in Appendix \ref{swv} for the reader's convenience.

\bigskip

We list notations  in the present paper:
\begin{itemize}
\item [$\centerdot$] $\textrm{diam}(M)$ denotes the diameter of $M$;

\item [$\centerdot$] $d(x,y)$ denotes the distance between $x$ and $y$ induced by the Riemannian metric $g$ on $M$;

\item [$\centerdot$] $\|\cdot\|$ denotes the norms induced by $g$ on both tangent and cotangent spaces of $M$;

\item[{$\centerdot$}] $B(v,r)$ stands for the open norm ball on $T_xM$ centered at $v\in T_xM$ with radius $r$, and $\bar B(v,r)$ stands for its closure;

\item [$\centerdot$] $C(M)$ stands for the space of continuous functions on $M$;

\item [$\centerdot$] $\text{Lip}(M)$ stands for the space of Lipschitz continuous functions on $M$;

\item [$\centerdot$] $\|\cdot\|_\infty$ stands for the supremum norm of the vector-valued function on its domain.
\end{itemize}

\section{An implicit Lax-Oleinik semigroup}\label{main1}
In this part, we are devoted to proving Theorem \ref{m1}. It is needed to show
\begin{itemize}
\item [($\ast$)]
if  the initial condition $\varphi$ is Lipschitz continuous, then $u(x,t):=T^-_t\varphi(x)$ is the Lipschitz  solution of (\ref{C});
 \item [($\ast\ast$)]if $\varphi$ is  continuous, then $u(x,t):=T^-_t\varphi(x)$ is the continuous  solution of (\ref{C}).
\end{itemize}

\subsection{On Item ($\ast$): the Lipschitz initial condition}
For the reader's convenience, we give a sketch of proof of Item  ($\ast$) as follows.
\begin{itemize}
\item [(1)] Lemma \ref{Bol} is proved in Appendix \ref{apcc}. Some background knowledge is given in Section \ref{A.11}.
 \item [(2)] Under the following \textbf{Condition 1}:

 {\it $u_k$ defined in (\ref{k}) is continuous on  $M\times [0,T]$ for each $k\in\mathbb N_+$,}

  \noindent we prove Lemma \ref{begin}(i) based on Lemma \ref{Bol}.
   \item [(3)] Under the following \textbf{Condition 2}:

 {\it $u_{k}$ is locally Lipschitz  on $M\times (0,T]$ for each $k\in \mathbb{N}_+$, and it is the  solution of (\ref{uk}) below,}

  \noindent we prove Lemma \ref{begin}(ii) by Lemma \ref{begin}(i).
  \item [(4)]After a {\it superlinear} modification, we have Lemma \ref{3.1} whose proof is provided in Appendix \ref{apbb}. The proof  needs some ingredients given in Section \ref{A.2}.
  \item [(5)] \textbf{Condition 2} in Lemma \ref{begin}(ii) can be verified by Lemma  \ref{3.1} under the {\it superlinear} condition. By Lemma \ref{begin}(ii),  Item  ($\ast$) holds under the {\it superlinear} condition.
  \item [(6)]  Under the {\it coercive} condition, we prove Lemma \ref{ukLip} based on Item  ($\ast$) under the {\it superlinear} condition.
  \item [(7)] Under the {\it coercive} condition, \textbf{Condition 2} in Lemma \ref{begin}(ii) can be verified by Lemma  \ref{ukLip}. By Lemma \ref{begin}(ii),  Item  ($\ast$) holds under the {\it coercive} condition.
\end{itemize}

\vspace{1em}

The following is a detailed proof of Item  ($\ast$).
\begin{lemma}\label{Bol}
Fix $T>0$. Given $\varphi\in C(M,\mathbb R)$, $v\in C(M\times [0,T],\mathbb R)$ and $t\in [0,T]$, the functional
\begin{equation*}
  \mathbb L^t(\gamma):=\varphi(\gamma(0))+\int_0^t L(\gamma(s),v(\gamma(s),s),\dot{\gamma}(s))ds
\end{equation*}
reaches its infimum in the class of curves
\begin{equation*}
  X_t(x)=\{\gamma\in W^{1,1}([0,t],M):\ \gamma(t)=x\}.
\end{equation*}
\end{lemma}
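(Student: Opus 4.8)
The plan is to apply the direct method of the calculus of variations, following the classical Tonelli-type argument but adapted to the present setting where $L$ may take the value $+\infty$ and is only lower semicontinuous. First I would check that the infimum is finite: the constant curve $\gamma\equiv x$ (or any smooth curve ending at $x$) lies in $X_t(x)$ and, since $(x,0,x)$ need not be in $\mathrm{dom}(L)$, one should instead pick a curve with velocity in the interior of the domain of $L$; more simply, one observes that $L$ is bounded below on $TM\times\mathbb{R}$ restricted to a bounded $u$-interval by coercivity of $H$ (indeed $L(x,u,\dot x)\ge \langle \dot x,0\rangle - H(x,u,0)\ge -\sup_{x,|u|\le R}H(x,u,0)$), so $\mathbb{L}^t$ is bounded below on $X_t(x)$ once we note $v$ is bounded on the compact set $M\times[0,T]$; combined with finiteness along one admissible curve this gives $-\infty < \inf \mathbb{L}^t < +\infty$. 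Take a minimizing sequence $\gamma_n$.

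The key step is compactness of the minimizing sequence. Here I would invoke the superlinear-type bound that comes from coercivity of $H$: since $H(x,u,p)$ is coercive in $p$ uniformly for bounded $u$ (Remark after the assumptions), its Legendre transform $L(x,u,\dot x)$ is superlinear in $\dot x$ uniformly for $x\in M$ and $u$ in a bounded set — that is, for every $A>0$ there is $C(A)$ with $L(x,u,\dot x)\ge A\|\dot x\| - C(A)$ whenever $|u|\le R$. Since $v(\gamma_n(s),s)$ is bounded by $\|v\|_\infty=:R$, this yields $\int_0^t \|\dot\gamma_n\|\,ds \le (\mathbb{L}^t(\gamma_n) + \|\varphi\|_\infty + tC(A))/A$, so the energies $\int_0^t\|\dot\gamma_n\|\,ds$ are uniformly bounded; in fact a de la Vallée–Poussin argument upgrades this to uniform integrability of $\|\dot\gamma_n\|$ (the $\int\|\dot\gamma_n\|$ bound is not quite enough for weak $L^1$ compactness, so one uses the full superlinearity with $A\to\infty$). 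Hence $\{\dot\gamma_n\}$ is weakly relatively compact in $L^1([0,t],\mathbb{R})$ by the Dunford–Pettis theorem, and the fixed endpoint $\gamma_n(t)=x$ together with the uniform $L^1$ bound on velocities gives equi-boundedness and (via absolute continuity of the integral, uniform integrability) equi-absolute-continuity of the $\gamma_n$; by Arzelà–Ascoli, passing to a subsequence, $\gamma_n\to\gamma$ uniformly and $\dot\gamma_n\rightharpoonup \dot\gamma$ weakly in $L^1$, with $\gamma\in W^{1,1}([0,t],M)$ and $\gamma(t)=x$, so $\gamma\in X_t(x)$.

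The final step is lower semicontinuity of $\mathbb{L}^t$ along this convergence. The boundary term $\varphi(\gamma_n(0))\to\varphi(\gamma(0))$ by continuity of $\varphi$ and uniform convergence. For the integral term, I would apply the standard Ioffe / Tonelli lower semicontinuity theorem for integral functionals $\int_0^t \ell(s,\gamma(s),\dot\gamma(s))\,ds$ with integrand $\ell(s,y,w):=L(y,v(y,s),w)$, which is measurable in $s$, lower semicontinuous in $(y,w)$ (by (LSC) for $L$ and continuity of $v$), and convex in $w$ (by (CON) for $L$); under uniform convergence $\gamma_n\to\gamma$ and weak $L^1$ convergence $\dot\gamma_n\rightharpoonup\dot\gamma$ this gives $\int_0^t \ell(s,\gamma,\dot\gamma)\,ds \le \liminf_n \int_0^t \ell(s,\gamma_n,\dot\gamma_n)\,ds$. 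Therefore $\mathbb{L}^t(\gamma)\le \liminf_n \mathbb{L}^t(\gamma_n)=\inf_{X_t(x)}\mathbb{L}^t$, so $\gamma$ is a minimizer. I expect the main obstacle to be the compactness/uniform-integrability step: one must carefully extract from the mere coercivity of $H$ (not strict convexity or $C^3$ regularity) the uniform superlinearity of $L$ in $\dot x$ on bounded $u$-ranges, and use it with a growing parameter to get genuine weak $L^1$ compactness of the velocities rather than just an $L^1$ norm bound; the lower semicontinuity, while technical because $L$ is extended-real-valued and only l.s.c., is handled by the classical Ioffe theorem and is comparatively routine.
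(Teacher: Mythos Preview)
Your proposal is correct and follows the classical direct method: minimizing sequence, weak $L^1$ compactness of velocities via superlinearity of $L$ (which you correctly derive from mere coercivity of $H$ by the elementary bound $L(x,u,\dot x)\ge A\|\dot x\|-\sup_{\|p\|\le A,\,|u|\le R}H(x,u,p)$), and Ioffe-type sequential weak lower semicontinuity of the integral functional with an extended-real-valued, l.s.c., convex-in-velocity integrand.

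The paper takes a genuinely different route. Instead of applying the direct method to $\mathbb L^t$ itself, it approximates $H$ by the superlinear Hamiltonians $H_n(x,u,p)=H(x,u,p)+\max\{\|p\|^2-n^2,0\}$, so that the corresponding Lagrangians $L_n$ are finite-valued and continuous and increase pointwise to $L$. Each approximate functional $\mathbb L^t_n$ then has a minimizer by the classical Tonelli existence theorem, and the paper shows that the sequence $\mathbb L^t_n$ $\Gamma$-converges to $\mathbb L^t$ (monotone limit of l.s.c.\ functionals) while all the approximate minimizers remain in a fixed weakly compact set; the abstract $\Gamma$-convergence machinery then delivers a minimizer of $\mathbb L^t$. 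Your approach is more self-contained and avoids the detour through $\Gamma$-convergence; the paper's approach buys compatibility with the rest of the argument, since the same approximation $H_n$ is reused in Section~\ref{H3} to pass from the superlinear to the coercive setting for the viscosity-solution statements. One minor remark: your hesitation about the constant curve is unnecessary, since $L(x,u,0)=-\inf_p H(x,u,p)$ is finite (and in fact uniformly bounded on $M\times[-R,R]$) by continuity and coercivity of $H$, so $\gamma\equiv x$ already gives $\inf\mathbb L^t<+\infty$.
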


\begin{lemma}\label{begin}
Fix $T>0$ and $\varphi\in C(M)$. For $k\in\mathbb N_+$ and $t\in (0,T]$, consider the following iteration procedure
\begin{equation}\label{k}
  u_k(x,t):=\inf_{\gamma(t)=x}\left\{\varphi(\gamma(0))+\int_0^tL(\gamma(\tau),u_{k-1}(\gamma(\tau),\tau),\dot{\gamma}(\tau)){d}\tau\right\},
\end{equation}
where $u_0(x,t):=\varphi(x)$.
\begin{itemize}
\item [(i)]
If $u_k$ is continuous on  $M\times [0,T]$ for each $k\in\mathbb N_+$, then  $\{u_k(x,t)\}_{k\in \mathbb{N}}$ converges uniformly  to $u(x,t):=T^-_t\varphi(x)$ for all $(x,t)\in M\times[0,T]$, where the semigroup $T_t^-:C(M)\to C(M)$ is formulated as (\ref{T-}).
 \item [(ii)] Let $\varphi\in \text{Lip}(M)$. If  $u_{k}$ is locally Lipschitz  on $M\times (0,T]$ for each $k\in \mathbb{N}_+$, and it is the  solution of
 \begin{equation}\label{uk}
  \left\{
   \begin{aligned}
   &\partial_t u(x,t)+H(x,u_{k-1}(x,t),\partial_xu(x,t))=0,
   \\
   &u(x,0)=\varphi(x),
   \\
   \end{aligned}
   \right.
\end{equation}
 then $u_k$ is Lipschitz on $M\times [0,T]$, and its Lipschitz constant depends only on $\sup_{k\in \mathbb N}\|u_k\|_\infty$ and $\|\partial_x\varphi\|_\infty$. Moreover, the limit function $u(x,t):=T_t^-\varphi$ is Lipschitz.

 \end{itemize}
\end{lemma}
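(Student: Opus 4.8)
My plan is to handle the two items separately, since (i) is a soft compactness/contraction-type argument and (ii) is where the real PDE work happens.

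For item (i), I would first establish that the map $\Phi:v\mapsto \big(x\mapsto \inf_{\gamma(t)=x}\{\varphi(\gamma(0))+\int_0^t L(\gamma,v(\gamma,\tau),\dot\gamma)\,d\tau\}\big)$ is a contraction on $C(M\times[0,T])$ in a suitably weighted sup-norm, once $T$ is fixed and we know the iterates stay in a fixed bounded set. The key estimate comes from the uniform Lipschitz assumption (LIP) on $L$ in $u$: if $v,w\in C(M\times[0,T])$, then along any admissible curve $\gamma$ one has $\big|\int_0^t L(\gamma,v(\gamma,\tau),\dot\gamma)-L(\gamma,w(\gamma,\tau),\dot\gamma)\,d\tau\big|\le \lambda\int_0^t \|v-w\|_{\infty,[0,\tau]}\,d\tau$, so $\|\Phi v-\Phi w\|_{\infty,[0,t]}\le \lambda\int_0^t\|v-w\|_{\infty,[0,\tau]}\,d\tau$. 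Iterating this Gronwall-type inequality gives $\|u_k-u_{k+j}\|_{\infty,[0,T]}\le \sum_{n\ge k}\frac{(\lambda T)^n}{n!}\|u_1-u_0\|_\infty$, hence $\{u_k\}$ is uniformly Cauchy on $M\times[0,T]$ and converges uniformly to some continuous $u$. By the same estimate passing to the limit in (\ref{k}), $u$ is a fixed point of the implicit equation (\ref{T-}); uniqueness of the fixed point (again from the contraction estimate) identifies it with $T^-_t\varphi$. One must also check the iterates remain uniformly bounded: since $L$ is bounded below on a neighborhood of any fixed velocity by (LSC) and $\mathrm{dom}(L)$ is $u$-independent, a priori $L^2$/length bounds on the minimizers (as in Lemma \ref{Bol}) together with (LIP) give $\sup_k\|u_k\|_\infty<\infty$ — this is a routine bookkeeping step I would relegate to the appendix or cite from the analysis underlying Lemma \ref{Bol}.

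For item (ii), assume $\varphi\in Lip(M)$ and that each $u_k$ is the viscosity solution of the (now $u$-independent in the unknown) equation (\ref{uk}) with Hamiltonian $H_{k-1}(x,t,p):=H(x,u_{k-1}(x,t),p)$, which is continuous, convex and coercive in $p$. The first step is the global-in-$t$ uniform Lipschitz bound in $x$: by the standard comparison principle for the $u$-independent Cauchy problem, $\|\partial_x u_k(\cdot,t)\|_\infty$ is controlled by $\|\partial_x\varphi\|_\infty$ together with a modulus governed by $\sup\|H_{k-1}\|$ on the relevant compact set, i.e. by $\sup_k\|u_k\|_\infty$; concretely one compares $u_k(x,t)$ with translates $u_k(\,\cdot+h,t)$, or uses the explicit Lax–Oleinik representation $u_k(x,t)=\inf\{\varphi(\gamma(0))+\int_0^t L(\gamma,u_{k-1}(\gamma,\tau),\dot\gamma)\,d\tau\}$ together with the coercivity-driven bound on the velocities of near-minimizers. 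The delicate point — and the one I expect to be the main obstacle — is the Lipschitz bound in $t$, because the Lagrangian in the $k$-th variational problem carries genuine $t$-dependence through $u_{k-1}(\gamma(\tau),\tau)$, so one faces the Lavrentiev phenomenon and cannot simply reparametrise minimizers. Here I would invoke the new variational inequality of \cite{Bet} (as the paper announces) to propagate a one-sided $t$-Lipschitz estimate: one shows $u_k(x,t)-u_k(x,s)$ for $s<t$ is bounded above by using the minimizer for time $t$ cut off at time $s$ (paying a cost controlled by $L$ evaluated along a short terminal arc, hence by the coercivity/$L^2$ bound already obtained), and bounded below by extending the minimizer for time $s$ by a short arc, using that $u_{k-1}$ is already known to be (locally) Lipschitz in $t$ so the Lagrangian's $t$-variation over a short interval is controlled. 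Combining the $x$- and $t$-estimates yields joint local Lipschitz continuity of $u_k$ on $M\times[0,T]$ with a constant depending only on $\sup_k\|u_k\|_\infty$ and $\|\partial_x\varphi\|_\infty$ — crucially uniform in $k$. Finally, since by item (i) $u_k\to u$ uniformly, the limit $u=T^-_t\varphi$ inherits this uniform Lipschitz constant, proving the last assertion.

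The main obstacle, to repeat, is the $t$-Lipschitz estimate for $u_k$: the implicit structure forces time-dependent Lagrangians and the associated Lavrentiev gap means the estimate cannot be read off from classical Tonelli theory, which is exactly why the regularity result of \cite{Bet} is brought in; controlling the cost of the "cut and splice" surgery on minimizers uniformly in $k$ — using only the bounds $\sup_k\|u_k\|_\infty$ and $\|\partial_x\varphi\|_\infty$ — is the technical heart of the argument and is presumably carried out in detail in Appendix \ref{apbb}.
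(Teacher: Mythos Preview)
Your treatment of item (i) is correct and essentially identical to the paper's: the (LIP) estimate yields $\|u_{k+1}-u_k\|_\infty\le (\lambda T)^k/k!\cdot\|u_1-\varphi\|_\infty$, whence $\{u_k\}$ is Cauchy in $C(M\times[0,T])$ and the limit is the unique fixed point of the implicit operator $\mathcal A_\varphi$.

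For item (ii), however, your route diverges substantially from the paper's, and the divergence matters. You propose a \emph{variational} argument: first an $x$-Lipschitz bound (by comparison with translates or near-minimizer velocity control), then a $t$-Lipschitz bound via cut-and-splice on minimizers together with the regularity result of \cite{Bet}. The paper does the opposite, and by purely PDE means: it proves $\|\partial_t u_k(\cdot,t)\|_\infty\le Ke^{\lambda t}$ by induction on $k$, where $K$ depends only on $\sup_k\|u_k\|_\infty$ and $\|\partial_x\varphi\|_\infty$. The inductive step constructs explicit time-shifted sub- and supersolutions of (\ref{uk}), namely $\bar w(x,t)=u_k(x,t-h)-Khe^{\lambda t}$ for $t>h$ (glued to $\varphi(x)-Ke^{\lambda h}t$ for $t\le h$) and its mirror, and applies the comparison principle. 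Once the $t$-bound is in hand, the $x$-Lipschitz bound follows immediately from the PDE itself: $H(x,0,\partial_x u_k)\le Ke^{\lambda T}+\lambda\sup_k\|u_k\|_\infty$, and coercivity (CER) bounds $\|\partial_x u_k\|_\infty$.

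Your approach has a genuine gap precisely where you flag the ``technical heart'': the uniformity in $k$. Applying \cite{Bet} to the Lagrangian $L(\gamma,u_{k-1}(\gamma,\tau),\dot\gamma)$ requires condition (Lt) with $k(t,\sigma)=\lambda\|\partial_t u_{k-1}\|_\infty$, so the resulting Lipschitz bound on minimizers --- and hence your $t$-Lipschitz bound on $u_k$ --- depends on $\|\partial_t u_{k-1}\|_\infty$. Without an independent mechanism this can blow up along the iteration; the paper's comparison argument avoids this by producing the closed, $k$-independent estimate $Ke^{\lambda t}$ directly. Your $x$-Lipschitz step is also problematic: comparison with spatial translates is unavailable on a manifold, and ``coercivity-driven bounds on velocities of near-minimizers'' is exactly what fails under the Lavrentiev phenomenon. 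Finally, note that \cite{Bet} is \emph{not} used in the proof of Lemma \ref{begin} at all --- it enters later, in Lemma \ref{3.1}, to show (under superlinearity) that $u_k$ is indeed a locally Lipschitz viscosity solution of (\ref{uk}), which is a \emph{hypothesis} of item (ii), not part of its proof.
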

\begin{proof}
{\it Item (i):} By Lemma \ref{Bol}, the minimizers of each $u_k$ exist. First, let $\gm_1:[0,t]\to M$ be a minimizer of $u_1(x,t)$, then
\begin{equation*}
\begin{aligned}
  u_2(x,t)-u_1(x,t)&\leq \int_0^t\bigg[L(\gm_1(s),u_1(\gm_1(s),s),\dot\gm_1(s))-L(\gm_1(s),\varphi(\gm_1(s)),\dot\gm_1(s))\bigg]ds
  \\ &\leq \lambda \|u_1-\varphi\|_\infty t.
\end{aligned}
\end{equation*}
Exchanging the positions of $u_2$ and $u_1$, we obtain
\[|u_2(x,t)-u_1(x,t)|\leq \lambda\|u_1-\varphi\|_\infty t.\]
Let $\gm_2:[0,t]\to M$ be a minimizer of $u_2(x,t)$, then
\begin{equation*}
\begin{aligned}
  u_3(x,t)-u_2(x,t)&\leq \int_0^t\bigg[L(\gm_2(s),u_2(\gm_2(s),s),\dot\gm_2(s))-L(\gm_2(s),u_1(\gm_2(s),s),\dot\gm_2(s))\bigg]ds
  \\ &\leq \lambda\int_0^t|u_2(\gm_2(s),s)-u_1(\gm_2(s),s)|ds
  \leq \lambda \int_0^t\lambda\|u_1-\varphi\|_\infty sds
  \\ &=\frac{(\lambda t)^2}{2}\|u_1-\varphi\|_\infty.
\end{aligned}
\end{equation*}
Exchanging the positions of $u_3$ and $u_2$, we obtain
\[|u_2(x,t)-u_1(x,t)|\leq \frac{(\lambda t)^2}{2}\|u_1-\varphi\|_\infty.\]
Continuing the above procedure, we obtain
\[\|u_{j+1}-u_{j}\|_\infty \leq \frac{(\lambda T)^j}{j!}\|u_1-\varphi\|_\infty,\]
for each $j\in\mathbb N$. Thus,
\begin{equation*}
  \|u_k-\varphi\|_\infty\leq \sum_{j=0}^{k-1}\|u_{j+1}-u_j\|_\infty\leq \sum_{j=0}^{k-1}\frac{(\lambda T)^j}{j!}\|u_1-\varphi\|_\infty\leq e^{\lambda T}\|u_1-\varphi\|_\infty,\quad \forall k\in\mathbb N_+.
\end{equation*}
For $k_1>k_2$, we have
\[\|u_{k_1}-u_{k_2}\|_\infty\leq \frac{(\lambda T)^{k_2}}{k_2!}\|u_{k_1-k_2}-\varphi\|_\infty\leq \frac{(\lambda T)^{k_2}}{k_2!}e^{\lambda T}\|u_1-\varphi\|_\infty.\]
Since $(\lambda T)^k/k!$ converges to zero as $k\to \infty$, the right hand side can be arbitrarily small when $k_2$ is large enough. Therefore, the sequence $\{u_k(x,t)\}_{k\in\mathbb N}$ is a Cauchy sequence in the Banach space $(C(M\times[0,T]),\|\cdot\|_\infty)$. Then $\{u_k(x,t)\}_{k\in\mathbb N}$ converges uniformly to a continuous function $u(x,t)$. Define $A_\varphi:C(M\times[0,T])\to C(M\times[0,T])$ via
\begin{equation*}
  \mathcal A_\varphi[u](x,t):=\inf_{\gamma(t)=x}\left\{\varphi(\gamma(0))+\int_0^tL(\gamma(\tau),u(\gamma(\tau),\tau),\dot{\gamma}(\tau)){d}\tau\right\}.
\end{equation*}
Then the limit function $u(x,t)$ satisfies
\[\|\mathcal A_\varphi[u]-u\|_\infty\leq \|\mathcal A_\varphi[u]-u_k\|_\infty+\|u_k-u\|_\infty\leq \lambda T\|u-u_{k-1}\|_\infty+\|u_k-u\|_\infty.\]
Setting $k\rightarrow+\infty$ we conclude that $u(x,t)$ is the unique fixed point of $\mathcal A_\varphi$. Namely, $u(x,t):=T_t^-\varphi$. The semigroup property of $T_t^-$ can be verified by a similar argument as \cite[Proposition 3.3]{JWY}.

\vspace{1ex}

\noindent {\it Item (ii):} By Item (i), $\{u_k(x,t)\}_{k\in \mathbb{N}}$ converges uniformly, then \[\sup_{k\in \mathbb N}\|u_k(x,t)\|_\infty<+\infty.\] Define
\[K_1:=\max \{|H(x,u,p)|:\ x\in M,\ |u|\leq \sup_{k\in \mathbb N}\|u_k(x,t)\|_\infty,\ \|p\|\leq \|\partial_x\varphi(x)\|_\infty\},\]
and
\[K_2:=\min \{H(x,u,p):\ (x,p)\in T^*M,\ |u|\leq \sup_{k\in \mathbb N}\|u_k(x,t)\|_\infty\}.\]
We will prove by induction
\begin{equation}\label{partialt}
  \|\partial_t u_k(\cdot,t)\|_\infty \leq \max\{K_1e^{\lambda t},|K_2|\}
\end{equation}
for each $k\in\mathbb N_+$.

First, let us consider the case $k=1$. Define
\[K_0:=\max \{|H(x,u,p)|:\ x\in M,\ |u|\leq \|\varphi(x)\|_\infty,\ \|p\|\leq \|\partial_x\varphi(x)\|_\infty\}.\]
For any $h>0$, we define
\begin{equation}\label{barw1}
  w(x,t):=\left\{
   \begin{aligned}
   &\varphi(x)-K_0t,\quad t\leq h,
   \\
   &u_k(x,t-h)-K_0h,\quad t>h.
   \\
   \end{aligned}
   \right.
\end{equation}
First, the Lipschitz function $(x,t)\mapsto\varphi(x)-K_0t$ satisfies
\begin{equation}\label{k=1}
  \partial_t u+H(x,\varphi(x),\partial_x u)\leq 0
\end{equation}
almost everywhere. According to Proposition \ref{aesub}, $w(x,t)$ is a subsolution of (\ref{k=1}) for $t\leq h$. Also, $w(x,t)$ satisfies (\ref{k=1}) for $t>h$. Thus, $w(x,t)$ is a continuous subsolution of (\ref{k=1}) with $w(x,0)=\varphi(x)$. By the comparison result (see, e.g., \cite[Theorem 5.1]{Intr}), since $(x,t)\mapsto\varphi(x)-K_0t$ is Lipschitz in $x$, we have  \[w(x,h)=\varphi(x)-K_0h\leq u_1(x,h).\] Note that $u_1(x,t)$ is Lipschitz on $M\times[h,T]$, we have
\begin{equation*}
  u_1(x,t)-K_0h=w(x,t+h)\leq u_1(x,t+h),\quad \forall t\geq 0,\ h>0.
\end{equation*}
Let $h\to 0^+$. We have $\partial_t u_1(x,t)\geq -K_0\geq -K_1$. We also have
\[\partial_t u_1(x,t)=-H(x,\varphi(x),\partial_x u_1(x,t))\leq |K_2|.\]
Thus, (\ref{partialt}) holds for $k=1$.

Now assume (\ref{partialt}) holds for $k-1$. For any $h>0$, we define
\begin{equation}\label{barw2}
  \bar w(x,t):=\left\{
   \begin{aligned}
   &\varphi(x)-K_1e^{\lambda h}t,\quad t\leq h,
   \\
   &u_k(x,t-h)-K_1he^{\lambda t},\quad t>h.
   \\
   \end{aligned}
   \right.
\end{equation}
First, the Lipschitz function $(x,t)\mapsto\varphi(x)-K_1e^{\lambda h}t$ satisfies \[\partial_t u+H(x,{u_{k-1}}(x,t),\partial_x u)\leq 0\] almost everywhere. According to Proposition \ref{aesub}, $\bar w(x,t)$ is a subsolution of (\ref{uk}) for $t\leq h$. For $t>h$, we have
\begin{equation*}
\begin{aligned}
  &\partial_t \bar w(x,t)+H(x,u_{k-1}(x,t),\partial_x \bar w(x,t))
  \\ &=\partial_t u_k(x,t-h)-K_1h\lambda e^{\lambda t}+H(x,u_{k-1}(x,t),\partial_x u_k(x,t-h))
  \\ &\leq \partial_t u_k(x,t-h)-\lambda \sup_{s\in[t-h,t]}\|\partial_t u_{k-1}(\cdot,s)\|_\infty h+H(x,u_{k-1}(x,t),\partial_x u_k(x,t-h))
  \\ &\leq \partial_t u_k(x,t-h)+H(x,u_{k-1}(x,t-h),\partial_x u_k(x,t-h))=0.
\end{aligned}
\end{equation*}
By the comparison result, since $(x,t)\mapsto\varphi(x)-K_1e^{\lambda h}t$ is Lipschitz in $x$, we have  \[\bar w(x,h)=\varphi(x)-K_1he^{\lambda h}\leq u_k(x,h).\] Note that $u_k(x,t)$ is Lipschitz on $M\times[h,T]$, we have
\begin{equation*}
  u_k(x,t)-K_1he^{\lambda (t+h)}=\bar w(x,t+h)\leq u_k(x,t+h),\quad \forall t\geq 0,\ h>0.
\end{equation*}
Let $h\to 0^+$. We have $\partial_t u_k(x,t)\geq -K_1e^{\lambda t}$. We also have
\[\partial_t u_k(x,t)=-H(x,u_{k-1}(x,t),\partial_x u_k(x,t))\leq |K_2|.\]
We conclude that (\ref{partialt}) holds for $k$. Plugging them into (\ref{uk}), one obtain
\begin{equation*}
  H(x,0,\partial_xu_k(x,t))\leq \max\{K_1e^{\lambda T},|K_2|\}+\lambda \|{u_{k-1}}(x,t)\|_\infty.
\end{equation*}
Thus $\|\partial_x u_k(x,t)\|_\infty$ is bounded on $M\times[0,T]$ by (CER). It means $u_k(x,t)$ is Lipschitz on $M\times[0,T]$, and the Lipschitz constant only depends on $\sup_{k\in \mathbb N}\|u_k(x,t)\|_\infty$ and $\|\partial_x\varphi(x)\|_\infty$. Moreover,  $\{u_k(x,t)\}_{k\in\mathbb N}$ is equi-Lipschitz with respect to $k$. It follows that the limit function $u(x,t):=T_t^-\varphi$ is Lipschitz.
\end{proof}

According to Lemma \ref{begin} (ii), the key point for the proof of Item ($\ast$) is to show  for each $k\in \mathbb{N}$, $u_k(x,t)$ defined by (\ref{k}) is locally Lipschitz, and solves (\ref{uk}) in the viscosity sense. 
We divide the remaining proof into two steps. In Step 1, we prove Item ($\ast$) for the Hamiltonian $H(x,u,p)$ depending on $p$ superlinearly. In Step 2, the superlinearity is relaxed to (CER).

\subsubsection{Step 1: Proof under the superlinear condition}\label{H3'}

In this part, we assume the Hamiltonian $H:T^*M\times\mathbb R\rightarrow\mathbb R$ satisfies (C)(CON)(LIP) and the following.
\begin{itemize}
\item [\textbf{(SL):}]
For every $(x,u)\in M\times \mathbb{R}$, $H(x,u,p)$ is superlinear in $p$, i.e. there exists a function $\Theta:[0+\infty)\rightarrow\mathbb R$ satisfying
\begin{equation*}
  \lim_{r \rightarrow+\infty}\frac{\Theta(r)}{r}=+\infty,\quad \textrm{and}\quad H(x,u,p)\geq \Theta(\|p\|)\quad \textrm{for\ every}\ (x,p,u)\in T^*M\times\mathbb R.
\end{equation*}
\end{itemize}

The corresponding Lagrangian satisfies (CON)(LIP) and
\begin{itemize}
\item [\textbf{(C):}] $L(x,u,\dot x)$ is continuous;

\item [\textbf{(SL):}] For every $(x,u)\in M\times \mathbb{R}$, $L(x,u,\dot x)$ is superlinear in $\dot x$, i.e., there exists a function $\Theta:[0+\infty)\rightarrow\mathbb R$ satisfying
\begin{equation*}
  \lim_{r \rightarrow+\infty}\frac{\Theta(r)}{r}=+\infty,\quad \textrm{and}\quad L(x,u,\dot x)\geq \Theta(\|\dot x\|)\quad \textrm{for\ every}\ (x,\dot x,u)\in TM\times\mathbb R.
\end{equation*}
\end{itemize}
At the beginning, we need some technical results.
\begin{lemma}\label{3.1}
Given $T>0$ and  $\varphi\in C(M)$, if $v(x,t)$ is a Lipschitz function on $M\times[0,T]$, then
\begin{itemize}
\item [(1)] for any $(x,t)\in M\times[0,T]$, the minimizers of
\begin{equation}\label{u}
  u(x,t):=\inf_{\gamma(t)=x}\left\{\varphi(\gamma(0))+\int_0^tL(\gamma(\tau),v(\gamma(\tau),\tau),\dot{\gamma}(\tau)){d}\tau\right\}
\end{equation}
are Lipschitz. For any $r>0$, if $d(x,x')\leq r$ and $|t-t'|\leq r/2$, where $t\geq r> 0$, then the Lipschitz constants of the minimizers of $u(x',t')$ only depend on $(x,t)$ and $r$.
\item [(2)]  the value function $u(x,t)$ defined in (\ref{u}) is locally Lipschitz on $M\times(0,T]$.
\item [(3)] $u(x,t)$  is also the viscosity solution of
\begin{equation}\label{cu}
  \left\{
   \begin{aligned}
   &\partial_t u(x,t)+H(x,v(x,t),\partial_xu(x,t))=0,
   \\
   &u(x,0)=\varphi(x).
   \\
   \end{aligned}
   \right.
\end{equation}
on $M\times[0,T]$.
\end{itemize}
\end{lemma}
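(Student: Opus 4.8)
The plan is to treat $v$ as frozen and work with the time-dependent Lagrangian $\widetilde L(x,t,\dot x):=L(x,v(x,t),\dot x)$. Under (C)(CON)(LIP)(SL) this $\widetilde L$ is continuous on all of $TM\times[0,T]$ (in the superlinear regime $\mathrm{dom}(L)=TM\times\R$, and $v$ is continuous), convex in $\dot x$, and \emph{uniformly} superlinear, $\widetilde L(x,t,\dot x)\ge\Theta(\|\dot x\|)$; moreover $\|v\|_\infty=:R<\infty$, so by (LIP) $|\widetilde L(x,t,\dot x)-L(x,0,\dot x)|\le\lambda R$, whence $\widetilde L$ inherits all the uniform quantitative bounds of $L(x,0,\cdot)$, while the Lipschitz bound on $v$ supplies a uniform modulus of continuity in $(x,t)$. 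Thus we are in the classical framework of value functions of merely continuous, convex, superlinear, time-dependent Lagrangians of \cite{gen,meas,Fa}, which I would follow. Two facts to record: by Fenchel--Moreau (each $H(x,u,\cdot)$ being convex, continuous and superlinear) the convex conjugate of $\widetilde L(x,t,\cdot)$ is exactly $p\mapsto H(x,v(x,t),p)$; and for every $N>0$ there is $C_N>0$ with
\[
\widetilde L(x,t,\dot x)\ \ge\ N\|\dot x\|-C_N\qquad\text{on }TM\times[0,T],
\]
obtained by testing the Legendre supremum at a covector of norm $N$ along $\dot x$ and using (LIP) together with continuity of $H$ on $\{\|p\|\le N\}$.

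For items (2) and (3): existence of minimizers of (\ref{u}) is Lemma \ref{Bol}; an $L^\infty$ bound for $u$ on $M\times[0,T]$ comes from the constant-curve competitor for the upper bound and from $\widetilde L$ being bounded below for the lower bound. The local Lipschitz regularity on $M\times(0,T]$ is the regularizing effect of superlinearity: for each $\delta>0$ one bounds the time increments of $u$ on $M\times[\delta,T]$ and then, using that $u$ is a viscosity solution and that $H(x,v(x,t),\cdot)$ is coercive, upgrades this to a bound on $\|\partial_xu\|$ there, hence to Lipschitz continuity of $u$ on $M\times[\delta,T]$ --- the argument carried out in \cite{gen,meas}, similar in spirit to Lemma \ref{begin}, item (ii). The initial condition $u(\cdot,t)\to\varphi$ as $t\to 0^+$ holds because a near-minimizer of $u(x,t)$ with $t$ small cannot leave a small ball about $x$ (the superlinear cost of travel outweighs the $O(t)$ cost of the constant curve), so $\varphi(\gamma(0))$ is close to $\varphi(x)$; cf. \cite{meas}. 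Finally $u$ satisfies the dynamic programming principle $u(x,t+h)=\inf\{u(\gamma(t),t)+\int_t^{t+h}\widetilde L(\gamma,\tau,\dot\gamma)\,d\tau:\gamma(t+h)=x\}$ (minimizers from Lemma \ref{Bol}; the two inequalities are routine), and by the standard equivalence of this principle with being a viscosity solution of $\partial_tu+\mathcal H(x,t,\partial_xu)=0$, where $\mathcal H(x,t,\cdot)$ is the conjugate of $\widetilde L(x,t,\cdot)$, i.e. $H(x,v(x,t),\cdot)$ by the first paragraph, together with the initial condition, $u$ is the viscosity solution of (\ref{cu}) on $M\times[0,T]$ (see \cite{Fa08,Intr}); only continuity, convexity, superlinearity and Fenchel duality enter, so the merely continuous dependence of $\widetilde L$ on $x$ is harmless.

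For item (1): from the dynamic programming principle a minimizer $\gamma:[0,t]\to M$ of $u(x,t)$ calibrates $u$, i.e. $u(\gamma(b),b)-u(\gamma(a),a)=\int_a^b\widetilde L(\gamma,\tau,\dot\gamma)\,d\tau$ for $0\le a<b\le t$. On an interval where $u$ is Lipschitz with constant $K$ --- by item (2) this holds on $[\delta,t]$ with $K$ uniform over $M\times[\delta,T]$ --- bound the left side by $K(d(\gamma(a),\gamma(b))+(b-a))\le K\int_a^b(\|\dot\gamma\|+1)\,d\tau$ and the right side from below by the displayed inequality with $N=K+1$, obtaining $\int_a^b\|\dot\gamma\|\,d\tau\le (C_{K+1}+K)(b-a)$ for every such subinterval; hence $\gamma$ is Lipschitz there with constant $C_{K+1}+K$, which depends only on the uniform data of $\widetilde L$ on $M\times[\delta,T]$ and on $R$. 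Taking $\delta=r/2$ and noting that $d(x,x')\le r$, $|t-t'|\le r/2$, $t\ge r$ force $t'\ge r/2$, this yields the quantitative assertion in (1).

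The main obstacle is the regularity theory under a merely \emph{continuous} (not Lipschitz) $\varphi$. On the one hand one must establish the regularizing effect of superlinearity, namely that $u$ is Lipschitz on $M\times[\delta,T]$ for every $\delta>0$, which is the input to everything above; on the other hand one must control a minimizer near the free endpoint $\tau=0$, where the calibration bound fails, and rule out a Lavrentiev-type speed blow-up by means of the a priori superlinear estimate (equivalently, the variational inequality of \cite{Bet}), as in \cite{gen}. Granting these, items (2)--(3) are bookkeeping around the dynamic programming principle, and item (1) is the calibration identity turning the $L^\infty$ bound on $u$ into a uniform average-speed bound for minimizers.
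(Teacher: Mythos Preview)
Your route is genuinely different from the paper's, and largely sound, but the logical order is reversed and this has consequences for Item~(1).

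\medskip

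\textbf{What the paper does.} The paper proves (1) $\Rightarrow$ (2) $\Rightarrow$ (3). For (1) it works \emph{directly} with the variational inequality of \cite{Bet}: the Lipschitz dependence of $v$ in $t$ gives condition (Lt) with $k\equiv\lambda\|\partial_tv\|_\infty$, so each minimizer is Lipschitz on all of $[0,t]$. For the uniform statement it compares $u(x',t')$ with the curve that follows a minimizer of $u(x,t)$ on $[0,t-r]$ and a geodesic afterwards; this bounds the action and hence $\int_0^{t'}\|\dot\gamma\|$, produces a point of bounded speed, bounds $p$ there, and via $|p'|\le\lambda\|\partial_tv\|_\infty$ bounds $\|p\|_\infty$ independently of $(x',t')$; Lemma~\ref{Lip6.3} then yields the uniform Lipschitz constant. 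Item~(2) is then a direct comparison using these Lipschitz minimizers near the terminal time, and Item~(3) is checked by test functions, again using that the minimizer is Lipschitz.

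\medskip

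\textbf{What you do, and what it buys.} You obtain (2)--(3) first by the regularizing effect and the dynamic programming principle (citing \cite{gen,meas,Fa08,Intr}), and then derive the Lipschitz bound for minimizers from the calibration identity plus the local Lipschitz constant of $u$. This is clean and conceptually appealing: it shows that the speed bound for a minimizer on $[\delta,t]$ is nothing more than the Lipschitz constant of $u$ on $M\times[\delta,T]$ fed through the superlinear lower bound. It also makes transparent that only (2) and (3) are used downstream in the paper.

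\medskip

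\textbf{The residual gap.} Your calibration argument, however, only controls $\dot\gamma$ on $[\delta,t]$; it says nothing on $[0,\delta]$, where $u$ is not known to be Lipschitz for merely continuous $\varphi$. You correctly flag this and point to \cite{Bet}, but note that invoking \cite{Bet} here is not a one-liner: to get the \emph{uniform} constant in the second sentence of (1) one still needs a bound on $\|p\|_\infty$ that is independent of $(x',t')$, and that in turn needs an a priori action (or $L^1$-speed) bound for the minimizer of $u(x',t')$---exactly the comparison-with-a-neighboring-minimizer argument the paper carries out. Your calibration step does supply a point of bounded speed (any $s_0\in[\delta,t']$), so you can plug into (W) at $s_0$ and then propagate via $|p'|\le\lambda\|\partial_tv\|_\infty$; but this step should be stated, not left implicit. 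A second, smaller point: your sketch of (2) (``bound time increments, then upgrade via coercivity'') mirrors Lemma~\ref{begin}(ii), which assumes $\varphi\in Lip(M)$; for $\varphi\in C(M)$ the usual order is space-Lipschitz first (via superlinearity/semiconcavity, as in \cite{gen}), then time-Lipschitz---make sure your citation covers the continuous-data case.
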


Based on Lemma \ref{3.1}, we verify Item ($\ast$) under the assumption (SL). In fact, let $u_0:=\varphi\in \text{Lip}(M)$ in the iteration procedure given by (\ref{k}). By Lemma \ref{begin} (i), $u_k(x,t)$ converges uniformly  to $u(x,t):=T_t^-\varphi(x)$ on $M\times[0,T]$. By Lemma \ref{3.1} (2) and (3), $u_1(x,t)$ satisfies the assumptions in Lemma \ref{begin} (ii), by which  $u_1$ is Lipschitz on $M\times [0,T]$. Repeating the argument, one can obtain that $u_k$ is the  Lipschitz  solution of (\ref{uk}).  By Lemma \ref{begin} (ii), the Lipschitz constant of $u_k(x,t)$ is uniform with respect to $k$ on $M\times[0,T]$. Since $H_k(t,x,p):=H(x,u_k(x,t),p)$ converges uniformly on compact subsets of $\mathbb R\times T^*M$, and $u_k(x,t)$ converges uniformly on $M\times[0,T]$, then the backward semigroup $u(x,t):=T^-_t\varphi(x)$, as the limit of $u_k(x,t)$, is the  Lipschitz  solution of (\ref{C}) by the stability of viscosity solutions.

\subsubsection{Step 2: Relaxed to the coercive condition}\label{H3}

In this part, we assume the Hamiltonian $H:T^*M\times\mathbb R\rightarrow\mathbb R$ satisfies (C)(CON)(CER)(LIP). By Lemma \ref{Bol}, one has the existence of the minimizers. In order to obtain the Lispchitz regularity of $u_k$ in (\ref{k}).
We make a modification:
\begin{equation*}
  H_n(x,u,p):=H(x,u,p)+\max\{\|p\|^2-n^2,0\},\quad n\in\mathbb N.
\end{equation*}
It is clear that $H_n$ is superlinear in $p$. The sequence $H_n$ is decreasing, and converges uniformly to $H$ on compact subsets of $T^*M\times\mathbb R$. The sequence of the corresponding Lagrangians $\{L_n\}_{n\in \mathbb{N}}$ is increasing, and converges to $L$ pointwisely. Denote by $u_{n,k}(x,t)$ the  solution of (\ref{uk}) with $H$ replaced by $H_n$.


\begin{lemma}\label{ukLip}Let $H$ satisfy (C)(CON)(CER)(LIP). Let  $L$ be the Lagrangian associated to $H$. Given $\varphi\in \text{Lip}(M)$,
for each $k\in\mathbb N$, the function $u_k(x,t)$ defined by (\ref{k}) is the  Lipschitz  solution of (\ref{uk}).
\end{lemma}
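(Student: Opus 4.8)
The goal is to establish, by induction on $k$, that $u_k(x,t)$ defined by the iteration \eqref{k} is the Lipschitz continuous viscosity solution of \eqref{uk}, now only under (C)(CON)(CER)(LIP). The natural route is an approximation argument using the superlinear Hamiltonians $H_n$ introduced just above the statement: for fixed $n$, Step~1 already gives that $u_{n,k}(x,t)$ is the Lipschitz viscosity solution of \eqref{uk} with $H$ replaced by $H_n$, and it satisfies the variational identity \eqref{k} with $L$ replaced by $L_n$. I would then let $n\to\infty$ and show that $u_{n,k}$ converges, locally uniformly, to the function $u_k$ defined by \eqref{k}, and that the limit inherits both the viscosity-solution property (via stability of viscosity solutions under locally uniform convergence of solutions and of Hamiltonians) and a Lipschitz bound uniform in $n$ (which is the whole point — otherwise one only gets a continuous solution).

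Concretely, I would argue inductively. For $k=0$ there is nothing to prove ($u_0=\varphi$ is Lipschitz). Assume $u_{k-1}$ is Lipschitz on $M\times[0,T]$, with a Lipschitz constant controlled as in Lemma~\ref{begin}(ii); I want the same for $u_k$. First I would fix $n$ and apply Step~1 to the superlinear Hamiltonian $H_n$: since the coefficient function $(x,t)\mapsto u_{n,k-1}(x,t)$ appearing in \eqref{uk} is Lipschitz, Lemma~\ref{3.1} gives that $u_{n,k}$ is the locally Lipschitz viscosity solution of the corresponding Cauchy problem on $M\times[0,T]$, hence by Lemma~\ref{begin}(ii) (applied with $H_n$, whose Lipschitz-in-$u$ constant is still $\lambda$) it is genuinely Lipschitz on $M\times[0,T]$, and — crucially — the Lipschitz constant depends only on $\sup_k\|u_{n,k}\|_\infty$ and $\|\partial_x\varphi\|_\infty$. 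The key reduction step is therefore to show that $\sup_{n,k}\|u_{n,k}\|_\infty<+\infty$ and that $\|\partial_x u_{n,k}\|_\infty$ is bounded independently of $n$ (for fixed $k$, or uniformly in $k$). The $L^\infty$ bound is routine: $L_n\le L$ pointwise, so $u_{n,k}\le u_k\le\varphi+$ (something), and a lower bound follows from the uniform coercivity of the $H_n$ (they all dominate $H$, and $\inf_x H_n(x,0,0)\ge \inf_x H(x,0,0)$ gives a uniform lower barrier $\varphi(x)-K't$ as in the proof of Lemma~\ref{begin}). Given the uniform $L^\infty$ bound, the computation in the proof of Lemma~\ref{begin}(ii) yields $\|\partial_t u_{n,k}(\cdot,t)\|_\infty\le K e^{\lambda t}$ with $K$ depending only on $\sup\|u_{n,k}\|_\infty$ and $\|\partial_x\varphi\|_\infty$ — note this $K$ is an $H$-quantity, not an $H_n$-quantity, because it is the sup of $|H|$ (not $|H_n|$) over a region where $H_n=H$ once $\|p\|\le \|\partial_x\varphi\|_\infty\le n$. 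Plugging the uniform $\partial_t$-bound into $\partial_t u_{n,k}+H_n(x,u_{n,k-1},\partial_x u_{n,k})=0$ and using the uniform coercivity of $\{H_n\}$ (again all dominating $H$) gives a bound on $\|\partial_x u_{n,k}\|_\infty$ independent of $n$.

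With these uniform bounds in hand, the Arzelà–Ascoli theorem extracts a subsequence $u_{n,k}\to \bar u_k$ locally uniformly (indeed uniformly on $M\times[0,T]$), with $\bar u_k$ Lipschitz. Two things remain: (i) identify $\bar u_k$ with the function $u_k$ of \eqref{k}, and (ii) check $\bar u_k$ solves \eqref{uk}. For (ii), since $H_n\to H$ locally uniformly and $u_{n,k-1}\to u_{k-1}$ uniformly (induction hypothesis, using that $u_{n,k-1}\to u_{k-1}$ by the same argument at level $k-1$), the Hamiltonians $H(x,u_{n,k-1}(x,t),p)\to H(x,u_{k-1}(x,t),p)$ locally uniformly, so the standard stability theorem for viscosity solutions applies and $\bar u_k$ is the viscosity solution of \eqref{uk}; uniqueness of that solution (comparison, e.g.\ \cite[Theorem 5.1]{Intr}, valid since $u_{k-1}$ is Lipschitz in $x$) then also pins down the full limit, not just a subsequence. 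For (i), I would pass to the limit in the variational formula $u_{n,k}(x,t)=\inf_{\gamma(t)=x}\{\varphi(\gamma(0))+\int_0^t L_n(\gamma,u_{n,k-1}(\gamma,\tau),\dot\gamma)\,d\tau\}$: the ``$\ge$'' inequality $\bar u_k\ge u_k$ follows from $L_n\le L$ and $u_{n,k-1}\to u_{k-1}$ plus lower semicontinuity; the ``$\le$'' inequality $\bar u_k\le u_k$ follows from monotone convergence $L_n\uparrow L$ along a fixed (Lipschitz, by Lemma~\ref{3.1}) minimizing curve for $u_k$, together with the uniform convergence $u_{n,k-1}\to u_{k-1}$. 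Alternatively, and perhaps more cleanly, one can bypass (i) entirely by noting that $\bar u_k$, being the viscosity solution of \eqref{uk} with Lipschitz coefficient $u_{k-1}$, must coincide with the value function \eqref{k} by Lemma~\ref{3.1}(3) applied to $H$... except that Lemma~\ref{3.1} was proved under (SL), so here the honest path is the direct limit argument on the variational formula, which is why I keep it.

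**Main obstacle.** The delicate point is the $n$-uniformity of the Lipschitz-in-$x$ estimate, i.e.\ making sure the constant $K$ coming out of the $\partial_t$-bound does not secretly depend on $n$ through $|H_n|$ or through $\sup_{n,k}\|u_{n,k}\|_\infty$. This forces one to be careful that (a) the region over which one takes $\sup|H|$ versus $\sup|H_n|$ is one where $H_n\equiv H$ — this needs $\|\partial_x\varphi\|_\infty\le n$, so the estimate is only claimed for $n$ large, which is harmless — and (b) the $L^\infty$ bounds on $u_{n,k}$ are uniform in both $n$ and $k$, which in turn rests on the uniform coercivity $\inf_x H_n(x,0,p)\ge\inf_x H(x,0,p)$ providing $n$-independent sub/supersolution barriers. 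Once these uniformities are secured, everything else is the stability theorem plus Arzelà–Ascoli, which is standard. A secondary (but purely bookkeeping) nuisance is that the induction is nested: to get $u_{n,k}\to u_k$ one uses $u_{n,k-1}\to u_{k-1}$, so one should set up the induction hypothesis to include this convergence statement together with the uniform Lipschitz bound at level $k-1$.
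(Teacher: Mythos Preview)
Your plan matches the paper's proof: approximate by the superlinear $H_n$, obtain each $u_{n,k}$ as the Lipschitz viscosity solution of the approximating Cauchy problem via Step~1, derive $n$-uniform Lipschitz bounds (the paper makes exactly your observation that $K_n=K$ once $n\ge\|\partial_x\varphi\|_\infty$, and that $H_n\ge H$ suffices for the coercivity step), then pass to the limit by Arzel\`a--Ascoli and stability; the induction hypothesis is set up, as you note, to carry along the convergence $u_{n,k-1}\to u_{k-1}$.

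The one place where the paper proceeds differently is the identification of the limit with the variational $u_k$. Rather than arguing directly on the formula for $u_{n,k}$ as you propose, the paper introduces the auxiliary value function
\[
\bar u_{n,k}(x,t):=\inf_{\gamma(t)=x}\Bigl\{\varphi(\gamma(0))+\int_0^t L_n\bigl(\gamma(\tau),u_{k-1}(\gamma(\tau),\tau),\dot\gamma(\tau)\bigr)\,d\tau\Bigr\},
\]
i.e.\ with $L_n$ but the \emph{limiting} coefficient $u_{k-1}$, invokes the $\Gamma$-convergence result Lemma~\ref{inf} to get $\bar u_{n,k}\to u_k$ pointwise, and then uses (LIP) along a minimizer to obtain $\|\bar u_{n,k}-u_{n,k}\|_\infty\le\lambda T\|u_{k-1}-u_{n,k-1}\|_\infty\to 0$. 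Your direct approach is also correct; just note that your parenthetical appeal to Lemma~\ref{3.1} for Lipschitz regularity of a minimizer of $u_k$ is not available under (CER) alone --- but it is not needed either, since an absolutely continuous minimizer (Lemma~\ref{Bol}) together with $L_n\le L$ already yields $\limsup_n u_{n,k}\le u_k$, while weak lower semicontinuity (Lemma~\ref{TM}) along minimizers of $u_{n,k}$ gives $\liminf_n u_{n,k}\ge u_k$.
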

\begin{proof}
Given $n\in \mathbb{N}$, let
\begin{equation}\label{k2.4x}
  u_{n,k}(x,t):=\inf_{\gamma(t)=x}\left\{\varphi(\gamma(0))+\int_0^tL_n(\gamma(\tau),u_{n,k-1}(\gamma(\tau),\tau),\dot{\gamma}(\tau)){d}\tau\right\},
\end{equation}
with $u_{n,0}:=\varphi\in \text{Lip}(M)$.
We first prove the following assertion for each $k\in\mathbb N$ by induction.
\begin{itemize}
\item [\textbf{A[k]}:] Fix $k\in \mathbb{N}$. The sequence $\{u_{n,k}(x,t)\}_{n\in \mathbb{N}}$ is uniformly bounded and equi-Lipschitz with respect to $n$, and converges uniformly to $u_k(x,t)$ on $M\times[0,T]$. Moreover, the limit function $u_k(x,t)$ is Lipschitz.
\end{itemize}

We first prove that the assertion A[1] holds. By definition of $H_n$, for $n\geq \|\partial_x\varphi(x)\|_\infty$,
\begin{equation*}
  K^n_0:=\max\{|H_n(x,u,p)|:\ x\in M,\ |u|\leq \|\varphi(x)\|_\infty,\ \|p\|\leq \|\partial_x\varphi(x)\|_\infty\}
\end{equation*}
is always equal to
\begin{equation*}
  K_0=\max\{|H(x,u,p)|:\ x\in M,\ |u|\leq \|\varphi(x)\|_\infty,\ \|p\|\leq \|\partial_x\varphi(x)\|_\infty\}.
\end{equation*}
Note that $u_{n,1}(x,t)$ is the solution of
\begin{equation}\label{n1}
  \partial_t u+H_n(x,\varphi(x),\partial_x u)=0.
\end{equation}
Similar to the proof of (\ref{partialt}) with $k=1$, we have \[\partial_tu_{n,1}(x,t)\geq -K^n_0.\]
Combining (\ref{n1}) and the definition of $H_n$, we have
\begin{equation*}
  H(x,0,\partial_xu_{n,1}(x,t))\leq H_n(x,0,\partial_xu_{n,1}(x,t))
  \leq K_0+\lambda \|\varphi(x)\|_\infty,\quad \forall n\geq \|\partial_x\varphi(x)\|_\infty.
\end{equation*}
Therefore, $\{u_{n,1}(x,t)\}_{n\in \mathbb{N}}$ is equi-Lipschitz. Note that \[u_{n,1}(x,0)=\varphi(x).\] It follows that  $\{u_{n,1}(x,t)\}_{n\in \mathbb{N}}$ is uniformly bounded, so it has a converging subsequence. According to Lemma \ref{inf}, $u_{n,1}(x,t)$ converges to $u_1(x,t)$ pointwisely. It follows that  \[\lim_{n\to+\infty}u_{n,1}(x,t)=u_{1}(x,t),\quad \text{uniformly\ on\ }M\times[0,T],\] which implies that $u_1(x,t)$ is Lipschitz.

Now assume that the assertion A[k-1] holds. Then $u_{k-1}(x,t)$ is Lipschitz, and $l_{k-1}:=\sup_{n\in\mathbb N} \|u_{n,k-1}(x,t)\|_\infty$ is finite. We will prove A[k] from A[k-1]. 
First, plugging the continuous function $u_{k-1}(x,t)$ into (\ref{k}) and by Lemma \ref{Bol}, the minimizers of $u_k(x,t)$
exist in the class of absolutely continuous curves. By definition of $H_n$, for $n\geq \|\partial_x\varphi(x)\|_\infty$,
\begin{equation*}
  K_n:=\max\{|H_n(x,u,p)|:\ x\in M,\ |u|\leq l_{k-1},\ \|p\|\leq \|\partial_x\varphi(x)\|_\infty\}
\end{equation*}
 is always equal to
\begin{equation*}
  K:=\max\{|H(x,u,p)|:\ x\in M,\ |u|\leq l_{k-1},\ \|p\|\leq \|\partial_x\varphi(x)\|_\infty\}.
\end{equation*}
Note that $u_{n,k}$ is the solution of
\begin{equation}\label{nk}
  \partial_t u+H_n(x,u_{n,k-1}(x,t),\partial_x u)=0.
\end{equation}
Similar to the proof of (\ref{partialt}), we have \[\partial_tu_{n,k}(x,t)\geq -K_n-\lambda\|\partial_t u_{n,k-1}\|_\infty T.\]
Combining (\ref{nk}) and the definition of $H_n$, we have
\begin{equation*}
\begin{aligned}
  H(x,0,\partial_xu_{n,k}(x,t))&\leq H_n(x,0,\partial_xu_{n,k}(x,t))
  \\ &\leq K+\lambda\|\partial_t u_{n,k-1}\|_\infty T+\lambda l_{k-1},\quad \forall n\geq \|\partial_x\varphi(x)\|_\infty.
\end{aligned}
\end{equation*}
Therefore, $\{u_{n,k}(x,t)\}_{n\in \mathbb{N}}$ is equi-Lipschitz. Note that \[u_{n,k}(x,0)=\varphi(x).\] It follows that  $\{u_{n,k}(x,t)\}_{n\in \mathbb{N}}$ is uniformly bounded, so it has a converging subsequence. We have to show that all converging subsequences have the same limit function $u_k$. In fact, according to Lemma \ref{inf}, the value function
\[\bar u_{n,k}(x,t):=\inf_{\gamma(t)=x}\left\{\varphi(\gamma(0))+\int_0^tL_n(\gamma(\tau),u_{k-1}(\gamma(\tau),\tau),\dot{\gamma}(\tau)){d}\tau\right\}\] converges to $u_{k}(x,t)$ pointwisely. Taking a minimizer $\gamma$ of $u_{n,k}(x,t)$, we have
\begin{equation*}
\begin{aligned}
  \bar u_{n,k}(x,t)-u_{n,k}(x,t)\leq & \varphi(\gamma(0))+\int_0^tL_n(\gamma(\tau),u_{k-1}(\gamma(\tau),\tau),\dot{\gamma}(\tau)){d}\tau
  \\ &-\varphi(\gamma(0))+\int_0^tL_n(\gamma(\tau),u_{n,k-1}(\gamma(\tau),\tau),\dot{\gamma}(\tau)){d}\tau
  \\ \leq &\lambda\|u_{k-1}(x,t)-u_{n,k-1}(x,t)\|_\infty T.
\end{aligned}
\end{equation*}
Exchanging the role of $\bar u_{n,k}(x,t)$ and $u_{n,k}(x,t)$, we have $\|\bar u_{n,k}(x,t)-u_{n,k}(x,t)\|_\infty\rightarrow 0$ as $n\to\infty$. It follows that   \[\lim_{n\to+\infty}u_{n,k}(x,t)=u_{k}(x,t),\quad \text{uniformly\ on\ }M\times[0,T],\]which implies that $u_k(x,t)$ is Lipschitz.  Note that the Lipschitz constant may depend on $k$. Thus, the assertion A[k] holds.

Since $H_n$ converges uniformly  to $H$ on compact subsets of $T^*M\times\mathbb R$, and $u_{n,k}(x,t)$ converges  uniformly  to $u_k(x,t)$ on $M\times[0,T]$, by the stability of the viscosity solutions, we conclude that $u_k(x,t)$ is the  Lipschitz  solution of (\ref{uk}).
\end{proof}

By Lemma \ref{begin} (i),  $u_k(x,t)$  converges uniformly to $u(x,t)$ on $M\times[0,T]$. Moreover, the value \[\sup_{k\in \mathbb N}\|u_k(x,t)\|_\infty\] is finite. Since $\varphi\in \text{Lip}(M)$, then $\|\partial_x\varphi\|_\infty$ is also finite. By Lemma \ref{begin} (ii),   $\{u_k(x,t)\}_{k\in \mathbb{N}}$ is equi-Lipschitz. Therefore the limit function $u(x,t):=T^-_t\varphi(x)$ is the  Lipschitz  solution of (\ref{C}). So far, Theorem  \ref{m1} has been proved when $\varphi$ is Lipschitz.

\subsection{On Item ($\ast\ast$): the continuous initial conditions} At the beginning, we need to show $T_t^-\varphi$ is well-defined for each $\varphi\in C(M)$. By Lemma \ref{begin}, it suffices to
prove that given $T>0$ and $\varphi\in C(M)$,  $u_k$ defined in (\ref{k}) is  continuous  on $M\times[0,T]$.

In fact, for any $\varphi\in C(M)$, there exists a sequence of Lipschitz functions $\{\varphi_m\}_{m\in \mathbb{N}}$  converging uniformly to $\varphi$. We have already proved in Lemma \ref{ukLip} that for the initial function $\varphi_m$, the solution of (\ref{uk}), denoted by $u^m_k(x,t)$, is Lipschitz. We then proceed by induction in the following.
By definition, $u^m_0$  converges uniformly to $u_0$. Assume $u^m_{k-1}$  converges uniformly to $u_{k-1}$, then $u_{k-1}$ is continuous. By Lemma \ref{Bol}(i), $u_k(x,t)$ admits a minimizer $\gamma$. By definition, we have
\begin{equation*}
  u^m_k(x,t)-u_k(x,t)
  \leq \varphi_m(\gamma(0))-\varphi(\gamma(0))+\lambda\|u^m_{k-1}(x,t)-u_{k-1}(x,t)\|_\infty T.
\end{equation*}
Exchanging the roles of $u^m_k(x,t)$ and $u_k(x,t)$, we obtain $\|u^m_k-u_k\|_\infty\rightarrow 0$ as $m\to\infty$. Therefore, $u_k$ defined in (\ref{k}) is  continuous  on $M\times[0,T]$.


By Lemma \ref{begin} (i),   $u_k(x,t)$ converges uniformly  to $u(x,t):=T_t^-\varphi(x)$. It follows that  $u(x,t)$ is continuous. It remains to verify that  $u(x,t)$ is the solution of (\ref{C}).

 We have proved in Item ($\ast$) that for $\varphi\in \text{Lip}(M)$, $T^-_t\varphi(x)$ is the  Lipschitz  solution of (\ref{C}). We assert for any $\varphi$ and $\psi\in C(M)$, \begin{equation}\label{tno}\|T^-_t\varphi-T^-_t\psi\|_\infty\leq e^{\lambda t}\|\varphi-\psi\|_\infty.\end{equation}
If the assertion is true, then
 for $t\in [0,T]$, $T^-_t\varphi_m$ converges uniformly  to $T^-_t\varphi$ as $m\to \infty$. According to the stability of viscosity solutions, we conclude that $T^-_t\varphi$ is the solution of (\ref{C}) under the initial condition $u(x,0)=\varphi(x)$. The uniqueness of the  solution of (\ref{C}) is guaranteed by the comparison theorem (see \cite[Theorem 2.1]{Ish6}). The assertion (\ref{tno}) above will be verified in Proposition \ref{psg} below.

\section{An  existence result for the solutions of (\ref{E})}\label{secS}

In order to prove Theorem  \ref{S}, we collect two basic properties of the backward and forward Lax-Oleinik semigroups in the following.

\begin{proposition}\label{psg}
\
\begin{itemize}
\item [(1)] For any $\varphi$ and $\psi \in C(M)$, if $\varphi(x)<\psi(x)$ for all $x\in M$, then $T^-_t\varphi(x)< T^-_t\psi(x)$ and $T^+_t\varphi(x)< T^+_t\psi(x)$ for all $(x,t)\in M\times(0,+\infty)$.

\item [(2)] For any $\varphi$ and $\psi \in C(M)$, then $\|T^-_t\varphi-T^-_t\psi\|_\infty\leq e^{\lambda t}\|\varphi-\psi\|_\infty$ and $\|T^+_t\varphi-T^+_t\psi\|_\infty\leq e^{\lambda t}\|\varphi-\psi\|_\infty$ for all $t>0$.
\end{itemize}
\end{proposition}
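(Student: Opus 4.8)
The plan is to exploit the iteration $u_k$ from Lemma \ref{begin} together with the fixed-point construction $\mathcal A_\varphi$, and to establish both statements first for the approximating sequence, then pass to the limit. For part (1), fix $\varphi_1 < \varphi_2$ and set $\delta := \min_{x\in M}(\varphi_2(x)-\varphi_1(x)) > 0$. I would first prove the non-strict monotonicity $\varphi_1 \le \varphi_2 \Rightarrow T^-_t\varphi_1 \le T^-_t\varphi_2$: this follows inductively from (\ref{k}), since if $u_{k-1}^{(1)} \le u_{k-1}^{(2)}$ then for any admissible curve $\gamma$ with $\gamma(t)=x$ the (LIP) estimate on $L$ in the $u$-slot combined with a Gronwall-type comparison on $[0,t]$ (applied to the iteration in $k$, and then in the limit) gives $u_k^{(1)} \le u_k^{(2)}$; the limit $T^-_t$ inherits the inequality. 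For the strict inequality one sharpens this: comparing $T^-_t\varphi_1$ against $T^-_t(\varphi_1+\delta)$, it suffices to show $T^-_t(\varphi+\delta)(x) \ge T^-_t\varphi(x) + \delta e^{-\lambda t} > T^-_t\varphi(x)$ for $t>0$, which is exactly what part (2) will give in sharper form (see below). For the forward semigroup, I would invoke Remark \ref{tplus}: $T^+_t\varphi = -\bar T^-_t(-\varphi)$, and $\varphi_1 < \varphi_2$ translates to $-\varphi_1 > -\varphi_2$, so the statement for $T^+$ reduces to the statement for the backward semigroup $\bar T^-$ associated to $L(x,-u,-\dot x)$ (which satisfies the same hypotheses), applied with order reversed.

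For part (2), the key estimate is the contraction-type bound $\|T^-_t\varphi - T^-_t\psi\|_\infty \le e^{\lambda t}\|\varphi - \psi\|_\infty$. I would prove this at the level of the iteration $u_k$ and pass to the uniform limit. Fix $x$, $t$, and a minimizing curve $\gamma$ for $u_k$ built from initial datum $\varphi$; plugging $\gamma$ into the variational formula for the iteration built from $\psi$ gives
\begin{equation*}
u_k^{\psi}(x,t) - u_k^{\varphi}(x,t) \le \big(\psi(\gamma(0)) - \varphi(\gamma(0))\big) + \lambda\int_0^t \big|u_{k-1}^{\psi}(\gamma(\tau),\tau) - u_{k-1}^{\varphi}(\gamma(\tau),\tau)\big|\,d\tau,
\end{equation*}
using (LIP) on $L$. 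Writing $a_k(t) := \|u_k^{\psi}(\cdot,t) - u_k^{\varphi}(\cdot,t)\|_\infty$ and $\varepsilon := \|\varphi-\psi\|_\infty$, and exchanging the roles of $\varphi$ and $\psi$, this yields $a_k(t) \le \varepsilon + \lambda\int_0^t a_{k-1}(\tau)\,d\tau$ with $a_0 \equiv \varepsilon$. An elementary induction in $k$ gives $a_k(t) \le \varepsilon \sum_{j=0}^{k} (\lambda t)^j/j! \le \varepsilon e^{\lambda t}$, and since $u_k^{\varphi} \to T^-_t\varphi$ and $u_k^{\psi} \to T^-_t\psi$ uniformly by Lemma \ref{begin} (i), the bound passes to the limit. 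The forward case again follows from Remark \ref{tplus}, noting that $\|-\bar T^-_t(-\varphi) + \bar T^-_t(-\psi)\|_\infty = \|\bar T^-_t(-\varphi) - \bar T^-_t(-\psi)\|_\infty \le e^{\lambda t}\|\varphi-\psi\|_\infty$.

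Finally, I would close the loop on part (1): taking $\psi = \varphi + c$ for a constant $c>0$ in the refined one-sided version of the argument above (keeping only the inequality in one direction and not taking absolute values) gives $T^-_t(\varphi+c)(x) \ge T^-_t\varphi(x) + c\,e^{-\lambda t}$, hence $T^-_t\varphi_2(x) \ge T^-_t(\varphi_1 + \delta)(x) \ge T^-_t\varphi_1(x) + \delta e^{-\lambda t} > T^-_t\varphi_1(x)$ for every $t>0$ and $x\in M$, where $\delta = \min(\varphi_2-\varphi_1)>0$ and the first inequality uses the non-strict monotonicity. The main obstacle here is essentially bookkeeping rather than a genuine difficulty: one must be careful that the iteration $u_k$ is the right object to run the comparison on (since the semigroup itself is only defined implicitly as the fixed point of $\mathcal A_\varphi$), and that the constants produced are genuinely uniform in $k$ so that the limit is legitimate — both of which are supplied by Lemma \ref{begin}. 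The only subtlety worth flagging is that one cannot appeal to a PDE comparison principle directly for the strict inequality, because $H$ is merely Lipschitz (not proper) in $u$; the variational/Gronwall route above sidesteps this entirely.
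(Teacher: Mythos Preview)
Your argument for part (2) is correct and takes a genuinely different route from the paper. The paper works directly with the implicit semigroup: it fixes a minimizer $\gamma$ of $T^-_t\psi(x)$, sets $F(s)=T^-_s\varphi(\gamma(s))-T^-_s\psi(\gamma(s))$, and applies Gronwall along $\gamma$ using the dynamic programming principle (the minimizer of $T^-_t\psi$ restricted to $[0,s]$ is a minimizer of $T^-_s\psi$). Your iteration argument $a_k(t)\le \varepsilon+\lambda\int_0^t a_{k-1}(\tau)\,d\tau$, $a_0\equiv\varepsilon$, with induction giving $a_k(t)\le\varepsilon\sum_{j\le k}(\lambda t)^j/j!$, is a clean alternative that avoids invoking minimizers of the limit object; it trades the DPP for the explicit fixed-point construction.

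Part (1), however, has a genuine gap. Neither the non-strict monotonicity nor the quantitative bound $T^-_t(\varphi+c)\ge T^-_t\varphi+ce^{-\lambda t}$ follows from the iteration as you describe. At level $k$ the Lagrangian is $L(\cdot,u_{k-1}^{(i)},\cdot)$, and since $L$ is only Lipschitz in $u$ with no sign condition, $u_{k-1}^{(1)}\le u_{k-1}^{(2)}$ does \emph{not} imply $u_k^{(1)}\le u_k^{(2)}$. Concretely, your ``one-sided'' estimate (taking $\gamma$ a minimizer of $u_k^{\varphi+c}$ and not taking absolute values) yields only
\[
u_k^{\varphi+c}(x,t)-u_k^{\varphi}(x,t)\ \ge\ c-\lambda\int_0^t\big(u_{k-1}^{\varphi+c}(\gamma(\tau),\tau)-u_{k-1}^{\varphi}(\gamma(\tau),\tau)\big)\,d\tau,
\]
and to bound the right side from below you need an \emph{upper} bound on the integrand; the best available is $ce^{\lambda\tau}$ from part (2), giving $d_k(t)\ge 2c-ce^{\lambda t}$, which is negative for $t>(\ln 2)/\lambda$. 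The inequality $T^-_t(\varphi+c)\ge T^-_t\varphi+ce^{-\lambda t}$ is true, but its proof requires either the PDE comparison you said you wished to avoid (observe that $T^-_t\varphi+ce^{-\lambda t}$ is a subsolution with initial datum $\varphi+c$), or the paper's method: argue by contradiction, take a minimizer $\gamma$ of $T^-_t\varphi_2$, set $F(s)=T^-_s\varphi_2(\gamma(s))-T^-_s\varphi_1(\gamma(s))$, locate the first zero $s_0$ of $F$, and use the self-referential formula for $T^-_s\varphi_2$ along $\gamma$ together with (LIP) to get $F(s)\le\lambda\int_s^{s_0}F$, forcing $F\equiv 0$ on $[0,s_0]$ and contradicting $F(0)>0$. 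The essential point you are missing is that the Gronwall step must compare $T^-_s\varphi_i$ against \emph{itself} (via the DPP for the fixed point), not against the previous iterate $u_{k-1}$.
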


\begin{proof} We first prove Item (1).  Assume that there exists $(x,t)\in M\times[0,+\infty)$ such that $T^-_t\varphi(x)\geq T^-_t\psi(x)$. Let $\gamma:[0,t]\rightarrow M$ be a minimizer of $T^-_t\psi(x)$ with $\gamma(t)=x$. Define
\begin{equation*}
  F(s)=T^-_s\psi(\gamma(s))-T^-_s\varphi(\gamma(s)),\quad s\in [0,t].
\end{equation*}
Then $F$ is a continuous function defined on $[0,t]$, and $F(0)> 0$. By assumption we have $F(t)\leq 0$. Then there is $s_0\in[0,t)$ such that $F(s_0)=0$ and $F(s)>0$ for all $s\in [0,s_0)$. Since $\gamma$ is a minimizer of $T^-_t\varphi_2(x)$, we have
\begin{equation*}
  T^-_{s_0}\psi(\gamma(s_0))=T^-_{s}\psi(\gamma(s))+\int_{s}^{s_0}L(\gamma(\tau),T^-_\tau\psi(\gamma(\tau)),\dot{\gamma}(\tau))d\tau,
\end{equation*}
and
\begin{equation*}
  T^-_{s_0}\varphi(\gamma(s_0))\leq T^-_{s}\varphi(\gamma(s))+\int_{s}^{s_0}L(\gamma(\tau),T^-_\tau\varphi(\gamma(\tau)),\dot{\gamma}(\tau))d\tau,
\end{equation*}
which implies $F(s_0)\geq F(s)-\lambda \int_{s}^{s_0}F(\tau)d\tau$. Here $F(s_0)=0$, thus
\begin{equation*}
  F(s)\leq \lambda \int_{s}^{s_0}F(\tau)d\tau.
\end{equation*}
By the Gronwall inequality, we conclude $F(s)\equiv 0$ for all $s\in [0,s_0)$, which contradicts $F(0)>0$.

\vspace{1ex}

Next, we prove Item (2). For a given $x\in M$ and $t>0$, if $T^-_t\varphi(x)=T^-_t\psi(x)$, then the proof is completed. Without loss of generality, we consider $T^-_t\varphi(x)>T^-_t\psi(x)$. Let $\gamma:[0,t]\to M$ be a minimizer of $T^-_t\psi(x)$, define
\begin{equation*}
  F(s):=T^-_s\varphi(\gamma(s))-T^-_s\psi(\gamma(s)),\quad \forall s\in [0,t].
\end{equation*}
By assumption we have $F(t)>0$. If there is $\sigma\in[0,t)$ such that $F(\sigma)=0$ and $F(s)>0$ for all $s\in (\sigma,t]$, by definition we have
\begin{equation*}
  T^-_s\varphi(\gamma(s))\leq T^-_t\varphi(\gamma(\sigma))+\int_\sigma^sL(\gamma(\tau),T^-_\tau\varphi(\gamma(\tau)),\dot\gamma(\tau))d\tau,
\end{equation*}
and
\begin{equation*}
  T^-_s\psi(\gamma(s))=T^-_t\psi(\gamma(\sigma))+\int_\sigma^sL(\gamma(\tau),T^-_\tau\psi(\gamma(\tau)),\dot\gamma(\tau))d\tau,
\end{equation*}
which implies
\begin{equation*}
  F(s)\leq F(\sigma)+\lambda\int_\sigma^sF(\tau)d\tau,
\end{equation*}
where $F(\sigma)=0$. By the Gronwall inequality we conclude $F(s)\equiv 0$ for all $s\in[\sigma,t]$, which contradicts $F(t)>0$.

Therefore, for all $\sigma\in [0,t]$, we have $F(\sigma)>0$. Here $0<F(0)\leq \|\varphi-\psi\|_\infty$. By definition we have
\begin{equation*}
  T^-_s\varphi(\gamma(\sigma))\leq T^-_t\varphi(\gamma(0))+\int_0^\sigma L(\gamma(\tau),T^-_\tau\varphi(\gamma(\tau)),\dot\gamma(\tau))d\tau,
\end{equation*}
and
\begin{equation*}
  T^-_s\psi(\gamma(\sigma))=T^-_t\psi(\gamma(0))+\int_0^\sigma L(\gamma(\tau),T^-_\tau\psi(\gamma(\tau)),\dot\gamma(\tau))d\tau,
\end{equation*}
which implies
\begin{equation*}
  F(\sigma)\leq F(0)+\lambda\int_0^\sigma F(\tau)d\tau.
\end{equation*}
By the Gronwall inequality we get $F(\sigma)\leq \|\varphi-\psi\|_\infty e^{\lambda\sigma}$, which implies $T^-_t\varphi(x)-T^-_t\psi(x)\leq \|\varphi-\psi\|_\infty e^{\lambda t}$ by taking $\sigma=t$. Exchanging the role of $\varphi$ and $\psi$, we finally obtain that $\|T^-_t\varphi(x)-T^-_t\psi(x)\|_\infty\leq \|\varphi-\psi\|_\infty e^{\lambda t}$.

By definition, one can show the corresponding properties of $T^+$.
\end{proof}

Generally speaking, the local boundedness of $L(x,u,\dot x)$ does not hold if $H(x,u,p)$ satisfies the assumption (CER) rather than superlinearity. Fortunately, similar to \cite[Lemma 2.3]{Ish2}, one can prove the local boundedness of $L(x,u,\dot x)$ restricting on certain regions.
\begin{lemma}\label{CL}
Let $H(x,0,p)$ satisfy (C)(CON)(CER), there exist constants $\delta>0$ and $\bar{C}>0$ such that the Lagrangian $L(x,0,\dot x)$ associated to $H(x,0,p)$ satisfies
\begin{equation*}
L(x,0,\xi)\leq \bar{C},\quad \forall (x,\xi)\in M\times \bar B(0,\delta).
\end{equation*}
\end{lemma}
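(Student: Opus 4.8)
The plan is to reduce the boundedness of $L(x,0,\cdot)$ near $\dot x=0$ to the coercivity of $H(x,0,\cdot)$ by a straightforward Legendre–Fenchel estimate, in the spirit of \cite[Lemma 2.3]{Ish2}. Recall $L(x,0,\xi)=\sup_{p\in T_x^*M}\{\langle \xi,p\rangle-H(x,0,p)\}$. The key point is that coercivity (CER) forces $H(x,0,p)$ to grow on the cotangent fibers, so for $\xi$ small the linear term $\langle\xi,p\rangle$ cannot beat this growth, and the supremum is attained (up to a controlled error) on a fixed compact set of $p$'s on which the continuous function $H$ is bounded below.

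More concretely, I would first use assumption (C) together with compactness of $M$ to set $m_0:=\min_{x\in M}H(x,0,0)$, which is finite; so $H(x,0,0)\ge m_0$ for all $x$. Next, by (CER), choose $K>0$ such that $\inf_{x\in M}H(x,0,p)\ge m_0+1$ whenever $\|p\|\ge K$. Then fix any $\delta>0$ with $\delta K\le 1$, say $\delta:=1/K$. Now split the supremum defining $L(x,0,\xi)$ over the region $\|p\|\le K$ and the region $\|p\|>K$. On $\|p\|>K$ and $\|\xi\|\le\delta$ we have $\langle\xi,p\rangle-H(x,0,p)\le \delta\|p\|\cdot(\|p\|/\|p\|)-m_0-1$; more carefully one writes $\langle \xi,p\rangle\le \|\xi\|\,\|p\|$, and since coercivity actually gives, for each $R$, a bound $H(x,0,p)\ge R$ once $\|p\|$ is large (see Remark following (LIP)), one can arrange $\langle\xi,p\rangle-H(x,0,p)\le 0\le H(x,0,0)-m_0$ there — i.e. the far region never gives a larger value than $p=0$ already gives, once $\delta$ is small enough. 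Hence on $\|\xi\|\le\delta$ the supremum is effectively over the compact set $\{\|p\|\le K\}$, on which the continuous function $(x,p)\mapsto -H(x,0,p)$ is bounded above by some constant $c_K$ (again using (C) and compactness of $M$ and of the ball). Therefore $L(x,0,\xi)\le \delta K+c_K=:C_L$ for all $(x,\xi)\in M\times\bar B(0,\delta)$, which is the claim.

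The only genuinely delicate point is handling the ``far region'' $\|p\|>K$ cleanly: one must ensure that with the chosen $\delta$ the quantity $\langle\xi,p\rangle-H(x,0,p)$ stays below the contribution already obtained from the compact region, uniformly in $x$ and in $\|p\|$. This is where coercivity is used in its ``for each $R$ there is $K$'' form (the equivalent formulation recorded in Remark~1.2(1) of the paper, specialized to $u=0$): given the target upper bound $C$ coming from the compact region, pick $K$ so large that $H(x,0,p)\ge \delta\|p\|$ — equivalently $H(x,0,p)/\|p\|\to+\infty$ — for $\|p\|\ge K$, so that $\langle\xi,p\rangle-H(x,0,p)\le \|\xi\|\,\|p\|-\delta\|p\|\le 0$ when $\|\xi\|\le\delta$. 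A short monotonicity/telescoping argument or simply invoking $\lim_{\|p\|\to\infty}(\inf_x H(x,0,p))/\|p\|=+\infty$ (which follows from (CER) since $\inf_x H(x,0,p)\to+\infty$ and, by superlinearity is not assumed — instead one uses that for coercive $H$ one only needs $H(x,0,p)\ge \delta\|p\|$ eventually, which does follow from $\inf_x H(x,0,p)\to+\infty$ once $\delta$ is fixed first) closes the estimate. Everything else is routine continuity and compactness bookkeeping; no dynamical input is needed.
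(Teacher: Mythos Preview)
Your argument has a genuine gap in the treatment of the ``far region'' $\|p\|>K$. You assert that coercivity alone yields $H(x,0,p)\ge \delta\|p\|$ for $\|p\|$ large (and at one point even write ``equivalently $H(x,0,p)/\|p\|\to+\infty$''), but (CER) says only that $\inf_x H(x,0,p)\to+\infty$, with no rate. A function growing like $\sqrt{\|p\|}$ is coercive yet fails $H(x,0,p)\ge\delta\|p\|$ eventually for every $\delta>0$; with such growth your bound $\langle\xi,p\rangle-H(x,0,p)\le \delta\|p\|-(m_0+1)$ blows up as $\|p\|\to\infty$, and the supremum defining $L$ is not controlled. In fact the lemma is \emph{false} without convexity: for $H(p)=\sqrt{1+\|p\|}$ one gets $L(\xi)=+\infty$ for every $\xi\neq 0$.

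The missing ingredient is (CON), which you list in the hypotheses but never use. Convexity upgrades coercivity to at least linear growth: set $M_0:=\max_{x}H(x,0,0)$ and use (CER) to pick $K$ with $\inf_x H(x,0,p)\ge M_0+1$ for $\|p\|\ge K$. For $\|p\|\ge K$, writing $Kp/\|p\|$ as the convex combination $\tfrac{K}{\|p\|}\,p+\bigl(1-\tfrac{K}{\|p\|}\bigr)\cdot 0$ and applying (CON) gives
\[
H(x,0,p)\ \ge\ H(x,0,0)+\frac{\|p\|}{K}\bigl(H(x,0,Kp/\|p\|)-H(x,0,0)\bigr)\ \ge\ m_0+\frac{\|p\|}{K}.
\]
Now choose $\delta<1/K$; for $\|\xi\|\le\delta$ and $\|p\|\ge K$ one has $\langle\xi,p\rangle-H(x,0,p)\le(\delta-\tfrac{1}{K})\|p\|-m_0\le -m_0$, so the far region is harmless, and your compact-region estimate finishes the proof. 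This is essentially the mechanism behind \cite[Lemma~2.3]{Ish2}, to which the paper defers.
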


In the following part of this paper, we define
\begin{equation}\label{muu}
\mu:=\frac{\textrm{diam}(M)}{\delta}.
\end{equation}

\begin{lemma}\label{S2}
Let $\varphi\in C(M)$.
\begin{itemize}
\item [(1)] Given any $x_0\in M$, if $T^-_t\varphi(x_0)$ does not have an upper bound as $t\to +\infty$, then for any $c\in\mathbb R$, there exists $t_c>0$ such that $T^-_{t_c}\varphi(x)>\varphi(x)+c$ for all $x\in M$.

\item [(2)] Given any $x_0\in M$, if $T^-_t\varphi(x_0)$ does not have a lower bound as $t\to +\infty$, then for any $c\in\mathbb R$, there exists $t_c>0$ such that $T^-_{t_c}\varphi(x)<\varphi(x)+c$ for all $x\in M$.
\end{itemize}
\end{lemma}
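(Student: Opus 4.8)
The plan is to turn the semigroup identity $T^-_t=T^-_\mu\circ T^-_{t-\mu}$ into a pair of one–sided estimates comparing the value of $T^-_t\varphi$ at an arbitrary point of $M$ with its value at $x_0$, and then to argue by contradiction. First I would record the elementary bounds. By Lemma \ref{CL} there are $\delta,C_L>0$ with $L(x,0,\xi)\le C_L$ whenever $\|\xi\|\le\delta$; with $\mu=\textrm{diam}(M)/\delta$ as in \eqref{muu}, any two points of $M$ are joined by a constant–speed minimizing geodesic $\gamma\colon[0,\mu]\to M$ of speed $\le\delta$, along which $L(\gamma(\tau),u,\dot\gamma(\tau))\le C_L+\lambda|u|$ by (LIP). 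Conversely $L(x,u,\dot x)\ge -H(x,u,0)\ge -C_0-\lambda|u|$ everywhere, where $C_0:=\max_M|H(\cdot,0,0)|<\infty$ by (C). Finally, Proposition \ref{psg}(2) applied with $\psi\equiv0$, together with the two bounds just mentioned applied to $T^-_t0$, gives an a priori estimate $\|T^-_t\varphi\|_\infty\le Ce^{\lambda t}$ for a constant $C=C(\varphi,H)$.

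Next come the key inequalities. Fix $t\ge\mu$. For $y\in M$ choose a geodesic $\gamma_y\colon[0,\mu]\to M$ of speed $\le\delta$ with $\gamma_y(0)=y$, $\gamma_y(\mu)=x_0$, and insert it as a competitor in $T^-_t\varphi(x_0)=T^-_\mu(T^-_{t-\mu}\varphi)(x_0)$, using $T^-_\tau(T^-_{t-\mu}\varphi)=T^-_{t-\mu+\tau}\varphi$. The bounds above give $T^-_t\varphi(x_0)\le T^-_{t-\mu}\varphi(y)+\mu C_L+\lambda\int_{t-\mu}^{t}\|T^-_\sigma\varphi\|_\infty d\sigma$, hence, $y$ being arbitrary,
\begin{equation*}
\min_{M}T^-_{t-\mu}\varphi\ \ge\ T^-_t\varphi(x_0)-\mu C_L-\lambda\int_{t-\mu}^{t}\|T^-_\sigma\varphi\|_\infty\,d\sigma.\tag{$\spadesuit$}
\end{equation*}
Joining instead $x_0$ to an arbitrary $x\in M$, or joining the argmin of $T^-_{t-\mu}\varphi$ to an arbitrary $x$, the same computation yields
\begin{equation*}
\max_{M}T^-_{t}\varphi\ \le\ T^-_{t-\mu}\varphi(x_0)+\mu C_L+\lambda\int_{t-\mu}^{t}\|T^-_\sigma\varphi\|_\infty\,d\sigma\tag{$\clubsuit$}
\end{equation*}
and $\max_{M}T^-_{t}\varphi\le\min_{M}T^-_{t-\mu}\varphi+\mu C_L+\lambda\int_{t-\mu}^{t}\|T^-_\sigma\varphi\|_\infty d\sigma$; dually, using the lower bound on $L$, $\min_{M}T^-_{t}\varphi\ge\min_{M}T^-_{t-\mu}\varphi-\mu C_0-\lambda\int_{t-\mu}^{t}\|T^-_\sigma\varphi\|_\infty d\sigma$.

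Now the two statements. For (1), suppose it fails; since $\varphi$ is bounded this means there is $B$ with $\min_{M}T^-_t\varphi\le B$ for all $t$, while the hypothesis furnishes $t_n\to+\infty$ with $T^-_{t_n}\varphi(x_0)\to+\infty$. Using $(\clubsuit)$, the last two inequalities above and the standing bound $\min_{M}T^-_t\varphi\le B$, one bootstraps a bound on $\|T^-_\sigma\varphi\|_\infty$ that is uniform on windows $[t-\mu,t]$ — driven by $B$, not by the blow–up at $x_0$ — and feeding it into $(\spadesuit)$ at $t=t_n$ forces $\min_{M}T^-_{t_n-\mu}\varphi\to+\infty$, contradicting $\min_{M}T^-_{t_n-\mu}\varphi\le B$; thus one may take $t_c=t_n-\mu$ for $n$ large. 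For (2) one argues symmetrically from $(\clubsuit)$: its negation pins $\max_{M}T^-_t\varphi\ge B'$ for all $t$, while $T^-_{t_n}\varphi(x_0)\to-\infty$, so $(\clubsuit)$ at $t=t_n+\mu$ — again with $\|T^-_\sigma\varphi\|_\infty$ controlled uniformly on $[t_n,t_n+\mu]$ from the negation — forces $\max_{M}T^-_{t_n+\mu}\varphi\to-\infty$, and $t_c=t_n+\mu$ works. (One can also reformulate (2) through the Hamiltonian $H(x,-u,-p)$, which still satisfies (C)(CON)(CER)(LIP), cf. Remark \ref{tplus}, but the direct symmetric argument is the cleanest.)

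The hard part is exactly the coupling produced by the implicit $u$–dependence: the action integrals in $(\spadesuit)$ and $(\clubsuit)$ carry the factor $\|T^-_\sigma\varphi\|_\infty$, which a priori is only controlled by $Ce^{\lambda\sigma}$, so these inequalities do not close on their own. The crux is to upgrade this to a time–uniform bound on $\|T^-_\sigma\varphi\|_\infty$ under the negation hypothesis — i.e. to run the Gronwall (delay) estimates in the order dictated by the negation, keeping one of $\min_{M}T^-_t\varphi$, $\max_{M}T^-_t\varphi$ pinned to a constant — so that the divergence of $T^-_{t_n}\varphi(x_0)$ genuinely dominates the right–hand side; the remaining ingredients (Lemma \ref{CL}, the semigroup property, Proposition \ref{psg}, and the equi‑Lipschitz/stability facts of Lemma \ref{begin}) enter only as bookkeeping.
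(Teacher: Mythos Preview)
Your inequalities $(\spadesuit)$ and $(\clubsuit)$ are correct, but the argument does not close, and the place where it breaks is exactly the one you flag yourself. You propose to ``bootstrap a bound on $\|T^-_\sigma\varphi\|_\infty$ that is uniform on windows $[t-\mu,t]$'' from the negation hypothesis $\min_M T^-_t\varphi\le B$. This cannot succeed as stated: under the hypothesis of (1) you already know $T^-_{t_n}\varphi(x_0)\to+\infty$, so $\|T^-_t\varphi\|_\infty\to+\infty$ and no uniform bound on $\|T^-_\sigma\varphi\|_\infty$ is available at all. Your delay inequalities only yield
\[
\max_M T^-_t\varphi\ \le\ B+\mu C_L+\lambda\int_{t-\mu}^{t}\|T^-_\sigma\varphi\|_\infty\,d\sigma,
\]
which at best iterates to exponential growth; it never produces a constant bound on the integral term, and therefore $(\spadesuit)$ never forces $\min_M T^-_{t_n-\mu}\varphi\to+\infty$.

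The idea that is missing is the one the paper actually uses: do \emph{not} replace $|T^-_{t+\tau}\varphi(\alpha(\tau))|$ by $\|T^-_{t+\tau}\varphi\|_\infty$ in the action integral. Instead, under the negation (there is $c_0$ and, for each $t$, a point $x_t$ with $T^-_t\varphi(x_t)\le\varphi(x_t)+c_0$), fix $x$, run a geodesic $\alpha:[0,\mu]\to M$ from $x_t$ to $x$, and look at the function $s\mapsto T^-_{t+s}\varphi(\alpha(s))-(\varphi(x_t)+c_0)$. If it is positive at $s=\mu$, take the last time $\sigma\in[0,\mu)$ where it vanishes; from the variational inequality along $\alpha$ and (LIP) one gets, for $s\in(\sigma,\mu]$,
\[
T^-_{t+s}\varphi(\alpha(s))-(\varphi(x_t)+c_0)\ \le\ L_0\mu+\lambda\int_\sigma^s\bigl(T^-_{t+\tau}\varphi(\alpha(\tau))-(\varphi(x_t)+c_0)\bigr)\,d\tau,
\]
with $L_0=C_L+\lambda\|\varphi+c_0\|_\infty$ a constant depending only on $\varphi$ and $c_0$. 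Gronwall on this \emph{difference}, which starts from $0$ at $\sigma$, gives $T^-_{t+\mu}\varphi(x)\le\varphi(x_t)+c_0+L_0\mu e^{\lambda\mu}$, a bound independent of $t$, contradicting the unboundedness at $x_0$. The point is that anchoring at the crossing time $\sigma$ removes any reference to $\|T^-_\sigma\varphi\|_\infty$: the Lipschitz term is paid against the very quantity you are estimating, not against a global norm. Your crude bound $L(\gamma,u,\dot\gamma)\le C_L+\lambda|u|\le C_L+\lambda\|T^-_\sigma\varphi\|_\infty$ throws this leverage away.
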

\begin{proof} We only prove Item (1). Item (2) is similar to be verified.
 Assume that there exists $c_0\in\mathbb R$ such that for any $t>0$, we have a point $x_t\in M$ satisfying $T^-_t\varphi(x_t)\leq \varphi(x_t)+c_0$. Let $\alpha:[0,\mu]\rightarrow M$ be a geodesic connecting $x_t$ and $x$ with constant speed, where the constant $\mu$ was defined in (\ref{muu}), then $\|\dot \alpha\|\leq \delta$. If $T^-_{t+\mu}\varphi(x)>\varphi(x_t)+c_0$, since $T^-_t\varphi(x_t)\leq \varphi(x_t)+c_0$, there exists $\sigma\in[0,\mu)$ such that $T^-_{t+\sigma}\varphi(\alpha(\sigma))=\varphi(x_t)+c_0$ and  $T^-_{t+s}\varphi(\alpha(s))>\varphi(x_t)+c_0$ for all $s\in (\sigma,\mu]$. By definition we have
\begin{equation*}
\begin{aligned}
  T^-_{t+s}\varphi(\alpha(s))&\leq T^-_{t+\sigma}\varphi(\alpha(\sigma))+\int_\sigma^s L(\alpha(\tau),T^-_{t+\tau}\varphi(\alpha(\tau)),\dot \alpha(\tau))d\tau
  \\ &=\varphi(x_t)+c_0+\int_\sigma^s L(\alpha(\tau),T^-_{t+\tau}\varphi(\alpha(\tau)),\dot \alpha(\tau))d\tau,
\end{aligned}
\end{equation*}
which implies
\begin{equation*}
\begin{aligned}
  &T^-_{t+s}\varphi(\alpha(s))-(\varphi(x_t)+c_0)
  \leq \int_\sigma^s L(\alpha(\tau),T^-_{t+\tau}\varphi(\alpha(\tau)),\dot \alpha(\tau))d\tau
  \\ &\leq \int_\sigma^s L(\alpha(\tau),\varphi(x_t)+c_0,\dot \alpha(\tau))d\tau
  +\lambda\int_\sigma^s(T^-_{t+\tau}\varphi(\alpha(\tau))-(\varphi(x_t)+c_0))d\tau
  \\ &\leq L_0\mu+\lambda\int_\sigma^s(T^-_{t+\tau}\varphi(\alpha(\tau))-(\varphi(x_t)+c_0))d\tau,
\end{aligned}
\end{equation*}
where
\begin{equation*}
  L_0:=\bar{C}+\lambda \|\varphi+c_0\|_\infty,
\end{equation*}
and $\bar{C}$ is given in Lemma \ref{CL}.
By the Gronwall inequality, we have
\begin{equation*}
  T^-_{t+s}\varphi(\alpha(s))-(\varphi(x_t)+c_0)\leq L_0\mu e^{\lambda(s-\sigma)}\leq L_0\mu e^{\lambda \mu},\quad \forall s\in(\sigma,\mu].
\end{equation*}
Take $s=\mu$. We have $T^-_{t+\mu}\varphi(x)\leq \varphi(x_t)+c_0+L_0\mu e^{\lambda \mu}$. It means that $T^-_{t+\mu}\varphi(x)$ has an upper bound independent of $t$, which contradicts the assumption.
\end{proof}

\begin{lemma}\label{S3}
If there exist two continuous functions $\varphi_1$ and $\varphi_2$ on $M$ such that \[T^-_t\varphi_1\geq C_1,\quad T^-_t\varphi_2\leq C_2,\] then there is a constant function $\bar{\varphi}$ such that $|T^-_t\bar{\varphi}|\leq C_3$ for all $(x,t)\in M\times [0,+\infty)$, where $C_i$, $i=1,2,3$, are constants independent of $x$ and $t$.
\end{lemma}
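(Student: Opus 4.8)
The plan is to use the monotonicity of the semigroup to reduce the problem to constant functions, and then to locate a constant whose orbit under $T^-_t$ is trapped between a uniform lower and a uniform upper bound.

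First I would pass to constants. By the monotonicity in Proposition \ref{psg}(1) (and the obvious limiting argument for the non-strict comparison), the constant $p:=\sup_M\varphi_1$ satisfies $T^-_t p\ge T^-_t\varphi_1\ge C_1$ and the constant $q:=\inf_M\varphi_2$ satisfies $T^-_t q\le T^-_t\varphi_2\le C_2$ for all $(x,t)$. If $p\le q$, then for any constant $\bar\varphi\in[p,q]$ monotonicity yields $C_1\le T^-_t p\le T^-_t\bar\varphi\le T^-_t q\le C_2$ everywhere and we are done; so from now on assume $q<p$.

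Next I would introduce $\mathcal U:=\{a\in\R:\ \sup_{(x,t)}T^-_t a(x)<+\infty\}$ and $\mathcal L:=\{a\in\R:\ \inf_{(x,t)}T^-_t a(x)>-\infty\}$, where a real number is identified with the corresponding constant function. By Proposition \ref{psg}(1), $\mathcal U$ is a nonempty downward closed interval (it contains $q$) and $\mathcal L$ is a nonempty upward closed interval (it contains $p$); it then suffices to prove $\mathcal U\cap\mathcal L\neq\emptyset$, since any $\bar\varphi$ in the intersection works with $C_3$ the larger of its two associated bounds. The first step is $\mathcal U\cup\mathcal L=\R$. Suppose $a\notin\mathcal U$, i.e. $T^-_t a$ is not uniformly bounded above. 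Then Lemma \ref{S2}(1), whose proof in fact shows that in this case for every $c$ there is $t_c>0$ with $T^-_{t_c}a>a+c$ on $M$, applied with $c=p-a$ produces $t_c>0$ with $T^-_{t_c}a>p$ pointwise on $M$; hence Proposition \ref{psg}(1) gives $T^-_{s+t_c}a=T^-_s(T^-_{t_c}a)>T^-_s p\ge C_1$ for all $s>0$, while on the compact slab $M\times[0,t_c]$ the function $(x,t)\mapsto T^-_t a(x)$ is bounded below by Theorem \ref{m1}; so $a\in\mathcal L$. The symmetric argument, with Lemma \ref{S2}(2) and the constant $q$, shows $a\notin\mathcal L\Rightarrow a\in\mathcal U$, whence $\mathcal U\cup\mathcal L=\R$.

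Finally I would deduce $\mathcal U\cap\mathcal L\neq\emptyset$ from ``$\mathcal U$ downward closed, $\mathcal L$ upward closed, both nonempty, with union $\R$''. Writing $m^-:=\inf\mathcal L$ and $m^+:=\sup\mathcal U$, the covering property forces $m^-\le m^+$; if $m^-<m^+$, then any $a\in(m^-,m^+)$ lies in both sets and we are done. The delicate case, which I expect to be the main obstacle, is $m^-=m^+=:m_0$: one must exclude the configuration $\mathcal U=(-\infty,m_0]$, $\mathcal L=(m_0,+\infty)$ (and its mirror image), in which $T^-_t m_0$ is uniformly bounded above but not below while $T^-_t(m_0+\varepsilon)$ is uniformly bounded below but not above for every $\varepsilon>0$ --- a situation not ruled out by monotonicity together with the comparison estimate $\|T^-_t\varphi-T^-_t\psi\|_\infty\le e^{\lambda t}\|\varphi-\psi\|_\infty$ alone. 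Excluding it must use both hypotheses simultaneously --- the presence of $\varphi_1$ with orbit uniformly bounded below and of $\varphi_2$ with orbit uniformly bounded above --- together with a quantitative refinement of Lemma \ref{S2} (keeping track of the time $t_c$ and of the constant $L_0\mu e^{\lambda\mu}$ appearing in its proof, or iterating the ``push everywhere'' property) in order to transport a uniform two-sided bound across the threshold $m_0$ and contradict $m_0=\inf\mathcal L=\sup\mathcal U$. Once $\mathcal U\cap\mathcal L\neq\emptyset$ is established, choosing $\bar\varphi$ there and $C_3$ accordingly completes the proof.
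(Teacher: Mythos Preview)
Your reduction to constant initial data and the interval framework $\mathcal U,\mathcal L$ is sound, and the argument that $\mathcal U\cup\mathcal L=\R$ via Lemma \ref{S2} is correct. But the gap you yourself flag --- the boundary case $\sup\mathcal U=\inf\mathcal L=:m_0$ with, say, $m_0\in\mathcal U\setminus\mathcal L$ --- is real, and it is precisely the heart of the lemma; you have not supplied the argument to exclude it. In that configuration $T^-_tm_0$ is bounded above but not below, while $T^-_t(m_0+\varepsilon)$ is bounded below but not above for every $\varepsilon>0$, and nothing in your outline (monotonicity, the $e^{\lambda t}$ estimate, or a single application of Lemma \ref{S2}) rules this out. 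Your closing paragraph acknowledges this but does not carry it through.

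The paper resolves the difficulty with a different threshold and a commensurability trick. Instead of $\inf\mathcal L$ it sets
\[
A^*:=\inf\{A\in\R:\ \exists\,t_A>0\ \text{with}\ T^-_{t_A}A\ge A\ \text{on}\ M\},
\]
which is finite by Lemma \ref{S2}(1). The upper bound of $T^-_tA^*$ is easy: if it failed, Lemma \ref{S2}(1) would give $T^-_{t_1}A^*>A^*+1$, and Proposition \ref{psg}(2) would push $A^*-\varepsilon$ into the defining set for small $\varepsilon$, contradicting the infimum. For the lower bound one argues by contradiction: Lemma \ref{S2}(2) gives $t_1$ with $T^-_{t_1}A^*<A^*-1$; by the definition of $A^*$ as an infimum and Proposition \ref{psg}(2), there is $\bar A$ just above $A^*$ satisfying \emph{simultaneously} $T^-_{t_1}\bar A<\bar A-\tfrac12$ and $T^-_{t_2}\bar A\ge\bar A$ for some $t_2>0$. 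The punch line is to choose $n_1,n_2\in\mathbb N$ and a small $\varepsilon\ge 0$ with $n_1t_1+\varepsilon=n_2t_2$ (possible because $\{n_1t_1\bmod t_2:n_1\in\mathbb N\}$ is either periodic or dense in $[0,t_2)$): iterating monotonicity gives $T^-_{n_1t_1}\bar A<\bar A-\tfrac12$ and $T^-_{n_2t_2}\bar A\ge\bar A$, and continuity of $s\mapsto T^-_s$ near $s=0$ then forces $\bar A\le\bar A-\tfrac14$, a contradiction. This two-time commensurability argument is the missing idea; your framework can be salvaged by producing the same pair of inequalities for $m_0+\varepsilon$ and then running the identical trick, but as written the proposal stops short of it.
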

\begin{proof}

Define $A_1:=\|\varphi_1\|_\infty$ and $A_2:=-\|\varphi_2\|_\infty$, then $A_2\leq A_1$ and \[T^-_tA_1(x)\geq T^-_t\varphi_1(x),\quad T^-_tA_2(x)\leq T^-_t\varphi_2(x)\quad \text{for\  all\ } x\in M.\] If $T^-_t A_1(x)$ has an upper bound independent of $t$, then $\bar{\varphi}\equiv A_1$ is enough. If $T^-_tA_1(x)$ does not have an upper bound independent of $t$, we define
\begin{equation*}
  A^*:=\inf\{A:\ \exists t_A>0\ \textrm{such}\ \textrm{that}\ T^-_{t_A}A(x)\geq A,\ \forall x\in M\}.
\end{equation*}
By using Lemma \ref{S2} (1) with $c=0$, we have $A^*\leq A_1<+\infty$. The remaining discussion is divided into two cases.

\noindent Case (1): $A^*>-\infty$. In this case,  we aim to prove that $\bar{\varphi}\equiv A^*$ is enough.

We first show that $T^-_tA^*(x)$ has an upper bound independent of $t$.  Assume $T^-_tA^*(x)$ does not have an upper bound. By Lemma \ref{S2} (1), for $c=1$, there is $t_1>0$ such that $T^-_{t_1} A^*(x)>A^*+1$ for all $x\in M$. By Proposition \ref{psg} (2), for any $\varepsilon >0$, we have \[T^-_{t_1}(A^*-\varepsilon)(x)\geq T^-_{t_1}A^*(x)-e^{\lambda t_1}\varepsilon>A^*+1-e^{\lambda t_1}\varepsilon.\]
 For every $0<\varepsilon<(e^{\lambda t_1}-1)^{-1}$, we have $T^-_{t_1}(A^*-\varepsilon)(x)>A^*-\varepsilon$. It means that we have found a smaller constant $A^*-\varepsilon$ such that if  $t_{A^*-\varepsilon}:=t_1$, then \[T^-_{t_{A^*-\varepsilon}}(A^*-\varepsilon)(x)>A^*-\varepsilon,\] which contradicts the definition of $A^*$.

We then prove that $T^-_tA^*$ has a lower bound independent of $t$.  Assume $T^-_tA^*(x)$ does not have a lower bound. By using Lemma \ref{S2} (2) with $c=-1$, there is $t_1>0$ such that $T^-_{t_1} A^*(x)<A^*-1$ for all $x\in M$. Since $T^-_tA^*(x)$ has an upper bound independent of $t$, then $A^*<A_1$. By Proposition \ref{psg} (2) and $A^*<A_1$, there is a constant $\delta_0>0$ such that $A^*+\delta<A_1$ and
\begin{equation}\label{s1}
  T^-_{t_1}(A^*+\delta)(x)<A^*-\frac{1}{2}+\delta<A^*+\delta,
\end{equation}
for all $\delta\in[0,\delta_0)$. By the definition of $A^*$, there is $\bar A\in [A^*,A^*+\delta_0)$ and $t_2:=t_{\bar A}>0$ such that
\begin{equation}\label{s2}
  T^-_{t_2}\bar A(x)\geq \bar A.
\end{equation}
By (\ref{s1}), we have
\begin{equation}\label{bara}
 T^-_{t_1}\bar{A}(x)<\bar{A}-\frac{1}{2}<\bar{A}.
\end{equation}
Define $B^*:=\bar A-\frac{1}{2}$. According to the continuity of $T^-_t\varphi(x)$ at $t=0$, there exists $\varepsilon_0>0$ such that for $0\leq\sigma<\varepsilon_0$, we have
\begin{equation}\label{s3}
  T^-_\sigma B^*(x)\leq \bar A-\frac{1}{4}.
\end{equation}
For $t_1$ and $t_2>0$, there exist $n_1$ and $n_2\in\mathbb N$, and $\varepsilon\in[0,\varepsilon_0)$ such that $n_1t_1+\varepsilon=n_2t_2$. By Proposition \ref{psg} (1) and (\ref{s1}), we have
\begin{equation}\label{s4}
  T^-_{n_1t_1}\bar A(x)\leq T^-_{t_1}\bar A(x)<B^*.
\end{equation}
Take $\sigma=\varepsilon$ in (\ref{s3}). By Proposition \ref{psg} (1) and  (\ref{s4}), we get
\begin{equation}\label{s6}
  T^-_\varepsilon\circ T^-_{n_1t_1}\bar A(x)\leq T^-_\varepsilon B^*(x)\leq \bar A-\frac{1}{4}.
\end{equation}
By (\ref{s2}), one has $T^-_{n_2t_2}\bar A(x)\geq \bar A$. Thus
\begin{equation}\label{s7}
  \bar{A}-\frac{1}{4}\geq T^-_\varepsilon\circ T^-_{n_1t_1}\bar A(x)=T^-_{n_2t_2}\bar A(x)\geq \bar A,
\end{equation}
which is a contradiction.


\noindent Case (2): $A^*=-\infty$. In this case, we aim to prove that for any $A<A_2$, the function $T^-_tA(x)$ is uniformly bounded. Namely, $\bar{\varphi}\equiv A$ is enough. Since $T^-_tA(x)\leq T^-_tA_2(x)$, then $T^-_tA(x)$ has an upper bound. The proof of the existence of the lower bound of $T^-_tA(x)$ is similar to Case (1). In fact, we only need to replace $A^*$, $A_1$  by $A$ and  $A_2$ respectively.
\end{proof}

\begin{remark}\label{S1}
Let $\varphi\in C(M)$. According to \cite[Theorem 6.1]{Ish6},
 if $T^-_t\varphi(x)$ has a bound independent of $t$, then  the lower half limit
\[\check{\varphi}(x):=\lim_{r\rightarrow 0+}\inf\{T^-_t\varphi(y):\ d(x,y)<r,\ t>1/r\}\]
is a Lipschitz  solution of (\ref{E}). According to Proposition \ref{fix}, the function $\check{\varphi}$ is a backward weak KAM solution of (\ref{E}). Similarly, if $T^+_t\varphi(x)$ has a bound independent of $t$, define
\begin{equation*}
  \begin{aligned}
  \hat{\varphi}(x):&=\lim_{r\rightarrow 0+}\sup\{T^+_t\varphi(y):\ d(x,y)<r,\ t>1/r\}
  \\ &=\lim_{r\rightarrow 0+}\sup\{-\bar T^-_t(-\varphi)(y):\ d(x,y)<r,\ t>1/r\}
  \\ &=-\lim_{r\rightarrow 0+}\inf\{\bar T^-_t(-\varphi)(y):\ d(x,y)<r,\ t>1/r\}.
  \end{aligned}
\end{equation*}
Then $-\hat \varphi$ is a Lipschitz  solution of $H(x,-u,-\partial_xu)=0$. Equivalently, $\hat \varphi$ is a forward weak KAM solution of (\ref{E}).
\end{remark}

\noindent{\it Proof of Theorem  \ref{S}.}
 By assumption, there is $\varphi\in C(M)$ and $t_1>0$ such that $T^-_{t_1}\varphi\geq \varphi$. For any $t>0$, one can find $n\in\mathbb N$ and $r\in[0,t_1)$ such that $t=nt_1+r$. By Proposition \ref{psg} (1), we have $T^-_t\varphi\geq T^-_r\varphi$. Namely, $T^-_t\varphi$ has a lower bound independent of $t$. On the other hand, there is $\psi\in C(M)$ and $t_2>0$ such that $T^-_{t_2}\psi\leq \psi$. It is similar to obtain that $T^-_t\psi$ has an upper bound independent of $t$. By Lemma \ref{S3}, there exists a constant function $\bar{\varphi}$ such that $T^-_t\bar{\varphi}$ is uniformly bounded. By Remark \ref{S1}, (\ref{E}) admits Lipschitz viscosity solutions.
 \qed

\section{The Aubry set}\label{pm2'}

Let $u_-\in\mathcal S_-$. At the beginning, we prove that the limit function $x\mapsto \lim_{t\to +\infty}T^+_tu_-(x)$ is well defined. Corollaries \ref{<u-} and \ref{up} guarantee the boundedness of $T^+_tu_-$. Moreover, Item (1) of Theorem \ref{m2'}  is verified by Proposition \ref{T+cu+}, and Item (3) is shown by Proposition \ref{xt}. Item (2) of Theorem \ref{m2'} is similar to Item (1).

\begin{proposition}\label{*'}
Let $\varphi\in C(M)$ and $u_-\in \mathcal{S}_-$. If $\varphi$ satisfies the following condition:
\begin{itemize}
\item [($\odot$)] $\varphi\leq u_-$ and there exists a point $x_0$ such that $\varphi(x_0)=u_-(x_0)$.
\end{itemize}
\noindent then $T^+_t\varphi(x)$ has a bound independent of $t$ and $\varphi$.
\end{proposition}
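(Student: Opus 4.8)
The plan is to estimate $T^+_t\varphi$ from above and from below by two separate arguments: the upper bound is purely PDE-theoretic and uses only $\varphi\le u_-$, while the lower bound is built on a calibrated curve of $u_-$ through the point $x_0$ furnished by $(\odot)$.

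For the upper bound I would argue by comparison. By Remark \ref{tplus}, $T^+_t\varphi=-\bar T^-_t(-\varphi)$, where $\bar T^-_t$ is the backward Lax--Oleinik semigroup of $\bar H(x,u,p):=H(x,-u,-p)$; one checks at once that $\bar H$ again satisfies (C)(CON)(CER)(LIP), so by Theorem \ref{m1} the function $\bar u(x,t):=\bar T^-_t(-\varphi)(x)=-T^+_t\varphi(x)$ is the viscosity solution of $\partial_t\bar u+\bar H(x,\bar u,\partial_x\bar u)=0$ with $\bar u(\cdot,0)=-\varphi$. Since $u_-\in\mathcal S_-$ is a Lipschitz viscosity (hence a.e.) subsolution of $H(x,u,\partial_xu)=0$, the time-independent Lipschitz function $z(x,t):=-u_-(x)$ satisfies $\partial_t z+\bar H(x,z,\partial_x z)=H(x,u_-(x),\partial_xu_-(x))\le 0$ a.e., hence is a viscosity subsolution, and $z(\cdot,0)=-u_-\le-\varphi$ by $(\odot)$. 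The comparison theorem \cite[Theorem 2.1]{Ish6}, applied to $\bar H$, then gives $z\le\bar u$, i.e.
\[
T^+_t\varphi(x)\ \le\ u_-(x)\ \le\ \|u_-\|_\infty\qquad\text{for all }(x,t)\in M\times[0,+\infty).
\]

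For the lower bound I would use that $u_-\in\mathcal S_-$ provides a calibrated curve $\gamma\colon(-\infty,0]\to M$ with $\gamma(0)=x_0$ and $u_-(x_0)-u_-(\gamma(-s))=\int_{-s}^0 L(\gamma,u_-(\gamma),\dot\gamma)\,dr$ for every $s\ge 0$. Fix $t>0$, set $\xi(\tau):=\gamma(\tau-t)$ for $\tau\in[0,t]$, and apply the defining formula (T+) to the sub-arc $\xi|_{[\sigma,t]}$: this gives $T^+_{t-\sigma}\varphi(\xi(\sigma))\ge\varphi(x_0)-\int_\sigma^t L(\xi,T^+_{t-\tau}\varphi(\xi),\dot\xi)\,d\tau$. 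Subtracting the calibration identity $u_-(x_0)=u_-(\xi(\sigma))+\int_\sigma^t L(\xi,u_-(\xi),\dot\xi)\,d\tau$ and using (LIP) — legitimate since finiteness of the calibration integral forces $\dot\xi(\tau)\in\text{dom}(L)$ for a.e.\ $\tau$, and $\text{dom}(L)$ is $u$-independent — one gets, with $g(\sigma):=u_-(\xi(\sigma))-T^+_{t-\sigma}\varphi(\xi(\sigma))$,
\[
g(\sigma)\ \le\ \lambda\int_\sigma^t g(\tau)\,d\tau,\qquad \sigma\in[0,t].
\]
By the upper bound $g\ge0$, so the Gronwall inequality, run backward from $g(t)=0$, forces $g\equiv0$. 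Hence $T^+_s\varphi(\gamma(-s))=u_-(\gamma(-s))$ for every $s\ge0$: along $\gamma$ the semigroup $T^+$ stays pinned to $u_-$, a bound uniform in $s$ and in $\varphi$.

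To reach an arbitrary $x$, I would, for $t\ge\mu$ (with $\mu,\delta,C_L$ as in Lemma \ref{CL} and (\ref{muu})), join $x$ to $\gamma(\mu-t)$ by a constant-speed geodesic $\alpha$ on $[0,\mu]$ (so $\|\dot\alpha\|\le\delta$) and then run along $\xi$ on $[\mu,t]$, obtaining a curve $\eta$ with $\eta(0)=x$ and $\eta(t)=x_0$; feeding $\eta$ into (T+) gives $T^+_t\varphi(x)\ge u_-(x_0)-\int_0^\mu L(\alpha,T^+_{t-\tau}\varphi(\alpha),\dot\alpha)\,d\tau-\int_\mu^t L(\xi,T^+_{t-\tau}\varphi(\xi),\dot\xi)\,d\tau$. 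By the pinning step the integrand of the last integral is $L(\xi,u_-(\xi),\dot\xi)$, so that integral telescopes, via calibration, to $u_-(x_0)-u_-(\gamma(\mu-t))$; the $u_-(x_0)$'s cancel, leaving $T^+_t\varphi(x)\ge u_-(\gamma(\mu-t))-\int_0^\mu L(\alpha,T^+_{t-\tau}\varphi(\alpha),\dot\alpha)\,d\tau\ge-\|u_-\|_\infty-C_L\mu-\lambda\int_0^\mu|T^+_{t-\tau}\varphi(\alpha(\tau))|\,d\tau$ by Lemma \ref{CL} and (LIP). Using the upper bound in the form $|T^+_s\varphi|\le\|u_-\|_\infty+\mathcal M(s)$ with $\mathcal M(s):=\max\{0,-\inf_M T^+_s\varphi\}$, taking the infimum over $x$, and recalling $\int_0^\mu\mathcal M(t-\tau)\,d\tau=\int_{t-\mu}^t\mathcal M(s)\,ds$, one arrives at $\mathcal M(t)\le C+\lambda\int_{t-\mu}^t\mathcal M(s)\,ds$ for $t\ge\mu$, with $C$ depending only on $u_-,H,M$; a delay-Gronwall argument then bounds $\mathcal M$, and together with the upper bound this yields $|T^+_t\varphi(x)|\le C'$ independently of $t$ and $\varphi$ (the range $0<t<\mu$ being handled by a short direct argument). \emph{The main obstacle is precisely this last closure:} the cost of the connecting geodesic is controlled only through the values of $T^+_{t-\tau}\varphi$ at nearby times — the very quantity being bounded — so the estimate is self-referential and must be closed through the delay-integral inequality above; a secondary technical point, already present in the pinning step, is to ensure (LIP) may be applied along the (only absolutely continuous) calibrated curve, which is exactly where the $u$-independence of $\text{dom}(L)$ is used.
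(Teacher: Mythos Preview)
Your upper bound via PDE comparison and your pinning step along the calibrated curve are both correct; the paper obtains the same two facts by direct Gronwall arguments along minimizers (its Lemmas \ref{51} and \ref{52}), so your route there is a legitimate alternative.

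The genuine gap is in the lower bound. The delay integral inequality you arrive at,
\[
\mathcal M(t)\ \le\ C+\lambda\int_{t-\mu}^{t}\mathcal M(s)\,ds,\qquad t\ge\mu,
\]
does \emph{not} force $\mathcal M$ to stay bounded unless $\lambda\mu<1$, which is nowhere assumed ($\mu=\mathrm{diam}(M)/\delta$ and $\lambda$ are independent data). Indeed, for any $a>0$ small enough that $\lambda(1-e^{-a\mu})/a>1$ (possible precisely when $\lambda\mu>1$), the function $\mathcal M(t)=Ke^{at}$ satisfies the inequality for every $K>0$, so no uniform bound can be extracted. The obstacle you flag is therefore real and is not closed by a delay-Gronwall argument.

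The paper's Lemma \ref{53} sidesteps the self-reference entirely by a first-crossing trick. Instead of feeding the full concatenated curve into (T+), it only runs the geodesic $\alpha:[0,\mu]\to M$ from $x$ to the calibrated point $\gamma_-(-t+\mu)$ and compares $s\mapsto T^+_{t-s}\varphi(\alpha(s))$ to the \emph{fixed} number $c:=u_-(\gamma_-(-t+\mu))$. By your pinning step this function equals $c$ at $s=\mu$; if it is $\ge c$ at $s=0$ we are done, otherwise there is a first $\sigma\in(0,\mu]$ with value $c$, and on $[0,\sigma)$ the (LIP) estimate anchors the Lagrangian at the \emph{known} level $c$ (bounded by $\|u_-\|_\infty$) rather than at the unknown $T^+_{t-\tau}\varphi$. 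Gronwall then gives the explicit bound
\[
T^+_t\varphi(x)\ \ge\ u_-(\gamma_-(-t+\mu))-(C_L+\lambda\|u_-\|_\infty)\,\mu\, e^{\lambda\mu},
\]
with no circularity. Replacing your concatenated-curve estimate by this first-crossing argument (which uses exactly the same ingredients you already have: pinning, Lemma \ref{CL}, and (LIP)) closes the proof.
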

We divide the proof into three parts, that is, Lemmas \ref{51}, \ref{52} and \ref{53}.
\begin{lemma}\label{51}
Suppose $\varphi$ satisfies the condition ($\odot$), then $T^+_t\varphi(x)\leq u_-(x)$ for all $t>0$.
\end{lemma}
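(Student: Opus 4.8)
The plan is to exploit the defining property of a backward weak KAM solution together with the variational formula for $T^+_t$. Since $u_-\in\mathcal S_-$ is a backward weak KAM solution of (\ref{E}), it is in particular a fixed point of the backward semigroup, i.e. $T^-_t u_- = u_-$ for all $t\ge 0$ (this is recorded in the Appendix on weak KAM solutions). Equivalently, for every $x\in M$ and every $t>0$,
\[
u_-(x)=\inf_{\gamma(t)=x}\left\{u_-(\gamma(0))+\int_0^t L\bigl(\gamma(\tau),T^-_\tau u_-(\gamma(\tau)),\dot\gamma(\tau)\bigr)\,d\tau\right\}
=\inf_{\gamma(t)=x}\left\{u_-(\gamma(0))+\int_0^t L\bigl(\gamma(\tau),u_-(\gamma(\tau)),\dot\gamma(\tau)\bigr)\,d\tau\right\},
\]
using $T^-_\tau u_-=u_-$. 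In particular, for \emph{any} absolutely continuous curve $\gamma:[0,t]\to M$ with $\gamma(t)=x$, one has the one-sided (calibration-type) inequality
\[
u_-(\gamma(t))\le u_-(\gamma(0))+\int_0^t L\bigl(\gamma(\tau),u_-(\gamma(\tau)),\dot\gamma(\tau)\bigr)\,d\tau .
\]

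Now I would estimate $T^+_t\varphi(x)$ directly from (\ref{T+eq}). Fix $x\in M$ and $t>0$, and let $\gamma:[0,t]\to M$ be any admissible curve with $\gamma(0)=x$. Using the hypothesis $\varphi\le u_-$ and then the calibration inequality above applied to the curve $\gamma$,
\[
\varphi(\gamma(t))-\int_0^t L\bigl(\gamma(\tau),T^+_{t-\tau}\varphi(\gamma(\tau)),\dot\gamma(\tau)\bigr)\,d\tau
\le u_-(\gamma(t))-\int_0^t L\bigl(\gamma(\tau),T^+_{t-\tau}\varphi(\gamma(\tau)),\dot\gamma(\tau)\bigr)\,d\tau .
\]
The subtlety is that the Lagrangian in the $T^+$ formula is evaluated along $T^+_{t-\tau}\varphi$, not along $u_-$, so I cannot immediately invoke the calibration inequality for $u_-$. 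The clean way around this is a Gronwall/continuity-in-time argument along $\gamma$: set $G(s):=T^+_s\varphi(\gamma(t-s))-u_-(\gamma(t-s))$ for $s\in[0,t]$ (or the analogous comparison function), observe $G(0)=\varphi(\gamma(t))-u_-(\gamma(t))\le 0$, and use the dynamic programming inequality for $T^+_s$ together with property (LIP) of $L$ (the Lipschitz bound $|L(x,u,\dot x)-L(x,v,\dot x)|\le\lambda|u-v|$) to derive a differential inequality of the form $G(s)\le \lambda\int_0^s |G(\tau)|\,d\tau$ whenever $G$ would become positive; the Gronwall inequality then forces $G(s)\le 0$ for all $s$, hence $T^+_t\varphi(x)=G(t)+u_-(\gamma(t-t))\le u_-(x)$ after taking the supremum over $\gamma$. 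This is exactly the bootstrapping device already used in the proof of Proposition \ref{psg}, adapted to compare against the fixed point $u_-$.

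The main obstacle, as indicated, is handling the implicit $t$-dependence: the curve-integral in (\ref{T+eq}) involves $T^+_{t-\tau}\varphi$ rather than a fixed function, so one must run a Gronwall argument rather than a one-line monotonicity comparison. Once that is set up, the conclusion $T^+_t\varphi\le u_-$ is immediate, and it is uniform in $t$; note that the point $x_0$ with $\varphi(x_0)=u_-(x_0)$ plays no role in this half of Proposition \ref{*'} (it is needed only for the matching lower bound in Lemmas \ref{52} and \ref{53}).
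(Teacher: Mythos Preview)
Your overall strategy---compare $T^+_s\varphi$ with $u_-$ along a curve, use the domination property $u_-\prec L$ and the Lipschitz bound on $L$ in $u$, then close with Gronwall---is exactly the paper's approach, and your remark that the touching point $x_0$ plays no role here is correct. However, the way you set up the curve is where the argument breaks down.

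You take $\gamma$ to be an \emph{arbitrary} admissible curve with $\gamma(0)=x$ and then plan to ``take the supremum over $\gamma$'' at the end. There are two problems. First, your comparison function $G(t)=T^+_t\varphi(\gamma(0))-u_-(\gamma(0))=T^+_t\varphi(x)-u_-(x)$ does not depend on $\gamma$ at all, so there is nothing to take a supremum over. Second, and more importantly, for an arbitrary curve the dynamic programming principle for $T^+$ gives only
\[
T^+_s\varphi(\gamma(t-s))\ \ge\ T^+_{s'}\varphi(\gamma(t-s'))-\int_{s'}^{s} L\bigl(\gamma(t-r),T^+_{r}\varphi(\gamma(t-r)),\dot\gamma(t-r)\bigr)\,dr,
\]
a \emph{lower} bound for $T^+_s\varphi$. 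Combined with the domination inequality for $u_-$ (which gives an \emph{upper} bound on $u_-(\gamma(t-s'))-u_-(\gamma(t-s))$), the signs do not match: you obtain $G(s)\ge G(s')-\lambda\int_{s'}^s|G|$, not $G(s)\le\lambda\int_0^s|G|$, and Gronwall yields nothing.

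The fix, which is precisely what the paper does, is to argue by contradiction and take $\gamma$ to be a \emph{maximizer} of $T^+_t\varphi(x)$. Along a maximizer one has the \emph{equality}
\[
T^+_{t-\tau}\varphi(\gamma(\tau))=T^+_{t-\tau_0}\varphi(\gamma(\tau_0))-\int_\tau^{\tau_0} L\bigl(\gamma(s),T^+_{t-s}\varphi(\gamma(s)),\dot\gamma(s)\bigr)\,ds,
\]
and this, combined with $u_-\prec L$, gives the inequality $F(\tau)\le\lambda\int_\tau^{\tau_0}F(s)\,ds$ on the interval where $F(s):=T^+_{t-s}\varphi(\gamma(s))-u_-(\gamma(s))$ is positive; Gronwall then forces $F\equiv 0$, contradicting $F(0)>0$. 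So the missing ingredient in your plan is not a new idea but the correct choice of curve: a maximizer, not an arbitrary one.
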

\begin{proof}
 Assume there exists $(x,t)\in M\times (0,+\infty)$ such that $T^+_t\varphi(x)>u_-(x)$. Let $\gamma:[0,t]\rightarrow M$  be a minimizer of $T^+_t\varphi(x)$ with $\gamma(0)=x$. Define
\begin{equation*}
  F(s)=T^+_{t-s}\varphi(\gamma(s))-u_-(\gamma(s)),\quad s\in [0,t].
\end{equation*}
Then $F(s)$ is continuous and $F(t)=\varphi(\gamma(t))-u_-(\gamma(t))\leq 0$. By assumption we have $F(0)>0$. Then there is $\tau_0\in(0,t]$ such that $F(\tau_0)=0$ and $F(\tau)>0$ for all $s\in [0,\tau_0)$. For each $\tau\in[0,\tau_0]$, we have
\begin{equation*}
  T^+_{t-\tau}\varphi(\gamma(\tau))=T^+_{t-\tau_0}\varphi(\gamma(\tau_0))-\int_{\tau}^{\tau_0}L(\gamma(s),T^+_{t-s}\varphi(\gamma(s)),\dot{\gamma}(s))ds.
\end{equation*}
Since $u_-=T^-_tu_-$ for all $t>0$, we have
\begin{equation*}
  u_-(\gamma(\tau_0))\leq u_-(\gamma(\tau))+\int_{\tau}^{\tau_0}L(\gamma(s),u_-(\gamma(s)),\dot{\gamma}(s))ds.
\end{equation*}
Thus $F(\tau)\leq F(\tau_0)+\lambda \int_{\tau}^{\tau_0} F(s)ds$, where $F(\tau_0)=0$. Define $F(s)=G(\tau_0-s)$. We get
\begin{equation*}
  G(\tau_0-\tau)\leq \lambda\int_0^{\tau_0-\tau}G(\sigma)d\sigma.
\end{equation*}
By the Gronwall inequality, we conclude $F(\tau)=G(\tau_0-\tau)\equiv 0$ for all $\tau\in [0,\tau_0]$, which contradicts $F(0)>0$.
\end{proof}
\begin{corollary}\label{<u-}
Let $u_-\in\mathcal S_-$. Then $T^+_tu_-\leq u_-$ for each $t>0$.
\end{corollary}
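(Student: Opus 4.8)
The plan is to obtain this as an immediate specialization of Lemma \ref{51}. The key observation is that the hypothesis ($\odot$) in that lemma is satisfied by the choice $\varphi := u_-$ itself: the inequality $\varphi \le u_-$ is then an equality, hence trivially true, and for the distinguished point $x_0$ one may take \emph{any} point of $M$, since $\varphi(x_0) = u_-(x_0)$ holds everywhere. Before invoking the lemma I would also record that $u_- \in \mathcal S_-$ is continuous --- in fact Lipschitz, by the basic regularity of weak KAM solutions collected in Appendix \ref{swv} --- so that $u_- \in C(M)$ and the forward Lax-Oleinik semigroup $T^+_t u_-$ is well defined via \eqref{T+eq}.

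With these checks in place, Lemma \ref{51} gives $T^+_t u_-(x) = T^+_t \varphi(x) \le u_-(x)$ for every $t > 0$ and every $x \in M$, which is precisely the claimed inequality $T^+_t u_- \le u_-$.

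I do not expect any genuine obstacle here: the content is entirely carried by Lemma \ref{51}. If one preferred a self-contained argument, one could simply repeat the proof of Lemma \ref{51} verbatim with $u_-$ in place of $\varphi$, using the fixed-point identity $u_- = T^-_t u_-$, the uniform Lipschitz property (LIP) of $L$ in the $u$-variable, and the Gronwall inequality along a minimizer of $T^+_t u_-(x)$; this reproves the statement without explicit reference to the lemma. Either way, the corollary is one of the ingredients used in the sequel (together with Corollary \ref{up}) to control $T^+_t u_-$ and thereby make sense of the limit $\lim_{t\to+\infty} T^+_t u_-(x)$ appearing in Theorem \ref{m2'}.
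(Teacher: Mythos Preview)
Your proposal is correct and matches the paper's approach exactly: the corollary is stated immediately after Lemma \ref{51} without separate proof, precisely because it is the specialization $\varphi = u_-$, for which condition ($\odot$) holds trivially. Your remarks on the continuity of $u_-$ and the alternative self-contained argument are fine but unnecessary here.
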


Combining Corollary \ref{<u-} with Proposition \ref{psg} (1), one can  obtain that $T^+_tu_-=T^+_s\circ T^+_{t-s}u_-\leq T^+_su_-$ for all $t>s$, then we have
\begin{corollary}\label{ti}
$T^+_tu_-$ is decreasing in $t$.
\end{corollary}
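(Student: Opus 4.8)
The plan is to exploit the semigroup property of $T^+_t$ together with the one-step estimate of Corollary \ref{<u-} and the monotonicity of the forward Lax--Oleinik semigroup, exactly as sketched in the paragraph preceding the statement. First I would record that $T^+_t$ is genuinely a semigroup: by Remark \ref{tplus}, $T^+_t\varphi=-\bar T^-_t(-\varphi)$, and $\bar T^-_t$ (the backward semigroup associated to $L(x,-u,-\dot x)$) enjoys the semigroup property established in the proof of Theorem \ref{m1}. Hence for $t>s>0$ one may write $T^+_t u_-=T^+_s\bigl(T^+_{t-s}u_-\bigr)$.

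Next I would upgrade Proposition \ref{psg}(1) to a non-strict comparison: if $\varphi_1\le\varphi_2$, then $T^+_s\varphi_1\le T^+_s\varphi_2$ for all $s>0$. Indeed, for any $\eps>0$ we have $\varphi_1-\eps<\varphi_2$ pointwise, so Proposition \ref{psg}(1) gives $T^+_s(\varphi_1-\eps)<T^+_s\varphi_2$; combining this with Proposition \ref{psg}(2), which yields $\|T^+_s\varphi_1-T^+_s(\varphi_1-\eps)\|_\infty\le e^{\lambda s}\eps$, and letting $\eps\to 0^+$ we obtain $T^+_s\varphi_1\le T^+_s\varphi_2$.

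With these two ingredients in hand the conclusion is immediate. By Corollary \ref{<u-} we have $T^+_{t-s}u_-\le u_-$ for every $t>s>0$; applying the order-preserving operator $T^+_s$ to both sides of this inequality and using the semigroup identity gives
\begin{equation*}
T^+_t u_-=T^+_s\bigl(T^+_{t-s}u_-\bigr)\le T^+_s u_-,\qquad t>s>0,
\end{equation*}
that is, $t\mapsto T^+_t u_-$ is nonincreasing. The only slightly delicate point is the passage from the strict monotonicity of Proposition \ref{psg}(1) to its non-strict form, but as indicated above this is a routine approximation using Proposition \ref{psg}(2); everything else is bookkeeping with the semigroup property, so the whole argument fits into a few lines, matching the brevity of the surrounding corollaries.
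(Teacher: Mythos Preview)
Your argument is correct and follows exactly the route the paper takes in the paragraph preceding the corollary: semigroup property plus Corollary \ref{<u-} plus the monotonicity from Proposition \ref{psg}(1). Your extra step deriving non-strict monotonicity from the strict version via Proposition \ref{psg}(2) is a nice point of care that the paper itself leaves implicit.
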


\begin{lemma}\label{52}
Suppose $\varphi$ satisfies the condition ($\odot$). Let $\gamma_-:(-\infty,0]\rightarrow M$ be a $(u_-,L,0)$-calibrated curve  with $\gamma_-(0)=x_0$, then $T^+_t\varphi(\gamma_-(-t))=u_-(\gamma_-(-t))$ for each $t>0$.
\end{lemma}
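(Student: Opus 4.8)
The plan is to exploit the calibration property of $\gamma_-$ together with the bound $T^+_t\varphi \le u_-$ from Lemma \ref{51}, and run a Gronwall-type argument along $\gamma_-$. First I would record what calibration gives: since $\gamma_-$ is $(u_-,L,0)$-calibrated, for all $s<s'\le 0$ one has
\begin{equation*}
  u_-(\gamma_-(s'))=u_-(\gamma_-(s))+\int_s^{s'}L(\gamma_-(\tau),u_-(\gamma_-(\tau)),\dot\gamma_-(\tau))\,d\tau .
\end{equation*}
Fix $t>0$ and set $x_t:=\gamma_-(-t)$. By Lemma \ref{51} we already know $T^+_t\varphi(x_t)\le u_-(x_t)$, so it suffices to prove the reverse inequality. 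The idea is to push $\varphi$ forward along (a reparametrization of) the piece of $\gamma_-$ from $\gamma_-(-t)$ to $\gamma_-(0)=x_0$, where $\varphi(x_0)=u_-(x_0)$ by condition ($\odot$), and show that the deficit cannot open up.

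Concretely, I would consider the curve $\alpha:[0,t]\to M$, $\alpha(\sigma):=\gamma_-(\sigma-t)$, so $\alpha(0)=x_t$ and $\alpha(t)=x_0$, and define
\begin{equation*}
  F(\sigma):=u_-(\alpha(\sigma))-T^+_{t-\sigma}\varphi(\alpha(\sigma)),\qquad \sigma\in[0,t].
\end{equation*}
By Lemma \ref{51}, $F\ge 0$ on $[0,t]$; moreover $F(t)=u_-(x_0)-\varphi(x_0)=0$. I want to conclude $F(0)=0$, which is exactly the claim $T^+_t\varphi(x_t)=u_-(x_t)$. Using the definition of $T^+$ (inequality direction: $\alpha$ is an admissible competitor for $T^+_{t-\sigma}\varphi(\alpha(\sigma))$ via the sub-arc from $\sigma$ to $\sigma'$) one gets, for $\sigma\le\sigma'$,
\begin{equation*}
  T^+_{t-\sigma}\varphi(\alpha(\sigma))\ge T^+_{t-\sigma'}\varphi(\alpha(\sigma'))-\int_\sigma^{\sigma'}L(\alpha(\tau),T^+_{t-\tau}\varphi(\alpha(\tau)),\dot\alpha(\tau))\,d\tau ,
\end{equation*}
while calibration gives the matching identity for $u_-$ with the integrand $L(\alpha(\tau),u_-(\alpha(\tau)),\dot\alpha(\tau))$. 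Subtracting and using (LIP) of $L$ in the $u$-variable yields an estimate of the form $F(\sigma)\le F(\sigma')+\lambda\int_\sigma^{\sigma'}F(\tau)\,d\tau$. Taking $\sigma'=t$ and recalling $F(t)=0$, this reads $F(\sigma)\le \lambda\int_\sigma^t F(\tau)\,d\tau$; since $F\ge 0$, Gronwall (in the backward direction, i.e. applied to $G(r):=F(t-r)$) forces $F\equiv 0$ on $[0,t]$, in particular $F(0)=0$.

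The main obstacle I anticipate is justifying the direction of the $T^+$ inequality cleanly — i.e. that the relevant sub-arc of $\gamma_-$ is genuinely an admissible competitor in the variational definition \eqref{T+eq} of $T^+_{t-\sigma}\varphi$, and that the implicit $t$-dependence of the Lagrangian (through $T^+_{t-\tau}\varphi$ inside $L$) does not obstruct the semigroup/splitting step. This is a mirror image of the argument in Lemma \ref{51} and of Proposition \ref{psg} (1), so the same bookkeeping (choosing $\gamma_-$ restricted to the appropriate interval, reparametrized to $[0,t-\sigma]$, concatenating with the optimal competitor for the remaining time) should go through; one must only be careful that such competitors exist, which is exactly what Lemma \ref{Bol} provides. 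Once the inequality $F(\sigma)\le F(\sigma')+\lambda\int_\sigma^{\sigma'}F$ is in hand, the Gronwall step is routine.
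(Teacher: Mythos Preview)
Your proposal is correct and follows essentially the same route as the paper: reparametrize $\gamma_-$ on $[-t,0]$ to a curve on $[0,t]$, set $F(\sigma)=u_-(\alpha(\sigma))-T^+_{t-\sigma}\varphi(\alpha(\sigma))$, combine the calibration identity for $u_-$ with the variational inequality for $T^+$ along the sub-arc, use (LIP) to get $F(\sigma)\le F(\sigma')+\lambda\int_\sigma^{\sigma'}F$, and finish with Gronwall. The only cosmetic difference is that the paper phrases the endgame as a contradiction (assume $F(0)>0$, locate the first zero $s_0$, and force $F\equiv 0$ on $[0,s_0]$), whereas you take $\sigma'=t$ directly and apply Gronwall to $G(r)=F(t-r)$; your version is slightly more economical but otherwise identical in content. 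Your worry about the $T^+$ inequality is unfounded: it is exactly the semigroup property $T^+_{t-\sigma}\varphi=T^+_{\sigma'-\sigma}(T^+_{t-\sigma'}\varphi)$ together with the fact that the shifted sub-arc $\tau\mapsto\alpha(\sigma+\tau)$ is an admissible competitor in \eqref{T+eq}, just as in the paper's proofs of Lemma \ref{51} and Lemma \ref{53}.
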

\begin{proof}
For each $t>0$, we define $\gamma_t(s):=\gamma_-(s-t)$ for $s\in [0,t]$. By Lemma \ref{51}, for each $s\in[0,t]$, we have $u_-(\gamma_t(s))\geq T^+_{t-s}\varphi(\gamma_t(s))$. Define
\begin{equation*}
  F(s)=u_-(\gamma_t(s))-T^+_{t-s}\varphi(\gamma_t(s)),
\end{equation*}
then $F(s)\geq 0$ and $F(t)=0$. If $F(0)>0$, then there is $s_0\in(0,t]$ such that $F(s_0)=0$ and $F(s)>0$ for all $s\in [0,s_0)$. By definition, for $s_1\in[0,s_0)$, we have
\begin{equation*}
  u_-(\gamma_t(s_0))-u_-(\gamma_t(s_1))=\int_{s_1}^{s_0}L(\gamma_t(s),u_-(\gamma_t(s)),\dot \gamma_t(s))ds,
\end{equation*}
and
\begin{equation*}
  T^+_{t-s_1}\varphi(\gamma_t(s_1))\geq T^+_{t-s_0}\varphi(\gamma_t(s_0))-\int_{s_1}^{s_0}L(\gamma_t(s),T^+_{t-s}\varphi(\gamma_t(s)),\dot \gamma_t(s))ds,
\end{equation*}
which implies
\begin{equation*}
  F(s_1)\leq F(s_0)+\lambda \int_{s_1}^{s_0} F(s)ds.
\end{equation*}
By the Gronwall inequality, we conclude $F(s)\equiv 0$ for all $s\in[0,s_0]$, which contracts $F(0)>0$. Therefore $F(0)=0$. Namely, $T^+_t\varphi(\gamma_t(0))=u_-(\gamma_t(0))$. Recall $\gamma_t(s):=\gamma_-(s-t)$. We have $T^+_t\varphi(\gamma_-(-t))=u_-(\gamma_-(-t))$.
\end{proof}

\begin{lemma}\label{53}
Suppose $\varphi$ satisfies the condition ($\odot$), then $T^+_t\varphi(x)$ has a lower bound independent of $t$ and $\varphi$.
\end{lemma}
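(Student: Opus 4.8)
The plan is to exploit Lemma \ref{52}: along the $(u_-,L,0)$-calibrated curve $\gamma_-$ the value $T^+_\cdot\varphi$ is pinned to $u_-$, hence automatically bounded below there by $\min_{M}u_-$. I would then transport this lower bound to an arbitrary point $x\in M$ along a geodesic traversed slowly enough (speed $\le\delta$) that the local bound on the Lagrangian of Lemma \ref{CL} applies, closing the estimate by a Gronwall argument to cope with the fact that $T^+_\cdot\varphi$ itself sits inside the Lagrangian in (\ref{T+eq}).

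Concretely, with $\mu=\textrm{diam}(M)/\delta$ as in (\ref{muu}), I would first treat $t\ge\mu$. Fix $x\in M$ and set $y:=\gamma_-(-(t-\mu))$, so that $T^+_{t-\mu}\varphi(y)=u_-(y)\ge\min_{M}u_-$ by Lemma \ref{52} (the endpoint $t=\mu$ being handled directly by $\varphi(x_0)=u_-(x_0)$). Let $\alpha:[0,\mu]\to M$ be a constant-speed geodesic from $x$ to $y$; since $d(x,y)\le\textrm{diam}(M)$ we have $\|\dot\alpha\|\le\delta$. Put $G(s):=T^+_{t-s}\varphi(\alpha(s))$ for $s\in[0,\mu]$, so $G(0)=T^+_t\varphi(x)$ and $G(\mu)=u_-(y)$. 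The semigroup property of $T^+_t$ together with the variational formula (\ref{T+eq}), applied to the sub-arc $\alpha|_{[s_1,s_2]}$, gives
\be
 G(s_1)\ \ge\ G(s_2)-\int_{s_1}^{s_2}L\big(\alpha(\tau),G(\tau),\dot\alpha(\tau)\big)\,d\tau,\qquad 0\le s_1\le s_2\le\mu .
\ee
By Lemma \ref{51}, $G(\tau)\le u_-(\alpha(\tau))\le M_1:=\|u_-\|_\infty$, so $g(\tau):=M_1-G(\tau)\ge0$; and since $\|\dot\alpha\|\le\delta$, Lemma \ref{CL} and (LIP) yield $L(\alpha(\tau),G(\tau),\dot\alpha(\tau))\le L(\alpha(\tau),0,\dot\alpha(\tau))+\lambda|G(\tau)|\le C_L+\lambda M_1+\lambda g(\tau)$.

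Feeding this into the displayed inequality, rearranging, and choosing $s_2=\mu$ (where $g(\mu)=M_1-u_-(y)\le2M_1$) gives $g(s)\le C_2+\lambda\int_s^\mu g(\tau)\,d\tau$ with $C_2:=2M_1+(C_L+\lambda M_1)\mu$. Gronwall's inequality in its backward form then gives $g(s)\le C_2e^{\lambda(\mu-s)}\le C_2e^{\lambda\mu}$, so
\be
 T^+_t\varphi(x)=M_1-g(0)\ \ge\ M_1-C_2e^{\lambda\mu},
\ee
a bound depending only on $\|u_-\|_\infty$, $\lambda$, $C_L$, $\delta$ and $\textrm{diam}(M)$, hence independent of $t\ (\ge\mu)$, of $x$, and of $\varphi$. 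The remaining range $0<t<\mu$ is dealt with by a supplementary, more direct estimate in the same spirit; together with Lemma \ref{51} this then yields Proposition \ref{*'}.

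The main obstacle I anticipate is precisely the implicit character of the forward semigroup: $T^+_{t-\tau}\varphi$ occurs under the integral sign in (\ref{T+eq}), so a priori bounds cannot simply be inserted — the Gronwall step is what upgrades the one-sided bound of Lemma \ref{51} into two-sided control of $G$. A secondary difficulty is that, without superlinearity, the transporting curve must be kept at speed $\le\delta$ for Lemma \ref{CL} to apply, which fixes the time budget $\mu$ and thus forces the split into $t\ge\mu$ and $t<\mu$.
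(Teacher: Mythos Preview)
Your proposal is correct and follows essentially the same route as the paper: use Lemma~\ref{52} to pin $T^+_{t-\mu}\varphi$ to $u_-$ at the point $\gamma_-(-(t-\mu))$, transport to an arbitrary $x$ along a geodesic of speed $\le\delta$ so that Lemma~\ref{CL} controls the Lagrangian, and close with a Gronwall estimate. The only cosmetic difference is that the paper compares $T^+_{t-s}\varphi(\alpha(s))$ to the single value $u_-(\gamma_-(-t+\mu))$ and introduces a first-crossing time $\sigma$ before applying Gronwall, whereas you compare to the global constant $M_1=\|u_-\|_\infty$ (using Lemma~\ref{51} to ensure $g\ge0$) and apply Gronwall directly; your version is marginally cleaner but the mechanism is identical. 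Note also that the paper, like you, only treats $t>\mu$ explicitly.
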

\begin{proof}
Let $\gamma_-:(-\infty,0]\rightarrow M$  be a $(u_-,L,0)$-calibrated curve with $\gamma_-(0)=x_0$. Let $t>\mu$ and $\alpha:[0,\mu]\rightarrow M$ be a geodesic connecting $x$ and $\gamma_-(-t+\mu)$ with constant speed, then $\|\dot \alpha\|\leq \delta$. If $T^+_t\varphi(x)\geq u_-(\gamma_-(-t+\mu))$, then the proof is completed. It remains to consider $T^+_t\varphi(x)<u_-(\gamma_-(-t+\mu))$. Since \[T^+_{t-\mu}\varphi(\gamma_-(-t+\mu))=u_-(\gamma_-(-t+\mu)),\] then there is $\sigma\in(0,\mu]$ such that \[T^+_{t-\sigma}\varphi(\alpha(\sigma))=u_-(\gamma_-(-t+\mu)), \quad T^+_{t-s}\varphi(\alpha(s))<u_-(\gamma_-(-t+\mu))\quad \text{for\  all\ } s\in [0,\sigma).\] By definition we have
\begin{equation*}
\begin{aligned}
  T^+_{t-s}\varphi(\alpha(s))&\geq T^+_{t-\sigma}\varphi(\alpha(\sigma))-\int_s^\sigma L(\alpha(\tau),T^+_{t-\tau}\varphi(\alpha(\tau)),\dot \alpha(\tau))d\tau
  \\ &=u_-(\gamma_-(-t+\mu))-\int_s^\sigma L(\alpha(\tau),T^+_{t-\tau}\varphi(\alpha(\tau)),\dot \alpha(\tau))d\tau,
\end{aligned}
\end{equation*}
which implies
\begin{equation*}
\begin{aligned}
  &u_-(\gamma_-(-t+\mu))-T^+_{t-s}\varphi(\alpha(s))
  \leq \int_s^\sigma L(\alpha(\tau),T^+_{t-\tau}\varphi(\alpha(\tau)),\dot \alpha(\tau))d\tau
  \\ &\leq \int_s^\sigma L(\alpha(\tau),u_-(\gamma_-(-t+\mu)),\dot \alpha(\tau))d\tau
  +\lambda\int_s^\sigma(u_-(\gamma_-(-t+\mu))-T^+_{t-\tau}\varphi(\alpha(\tau)))d\tau
  \\ &\leq L_0\mu+\lambda\int_s^\sigma(u_-(\gamma_-(-t+\mu))-T^+_{t-\tau}\varphi(\alpha(\tau)))d\tau,
\end{aligned}
\end{equation*}
where
\begin{equation*}
  L_0:=\bar{C}+\lambda \|u_-\|_\infty,
\end{equation*}
and $\bar{C}$ is given by Lemma \ref{CL}.
Let $G(\sigma-s)=u_-(\gamma_-(-t+\mu))-T^+_{t-s}\varphi(\alpha(s))$, then
\begin{equation*}
  G(\sigma-s)\leq L_0\mu+\lambda\int_0^{\sigma-s}G(\tau)d\tau.
\end{equation*}
By the Gronwall inequality, we have
\begin{equation*}
  u_-(\gamma_-(-t+\mu))-T^+_{t-s}\varphi(\alpha(s))=G(\sigma-s)\leq L_0\mu e^{\lambda(\sigma-s)}\leq L_0\mu e^{\lambda \mu},\quad \forall s\in[0,\sigma).
\end{equation*}
Thus $T^+_t\varphi(x)\geq u_-(\gamma_-(-t+\mu))-L_0\mu e^{\lambda \mu}$. We finally get a lower bound of $T^+_t\varphi(x)$ independent of $t$ and $\varphi$.
\end{proof}
\begin{corollary}\label{up}
$T^+_tu_-$ has a lower bound independent of $t$.
\end{corollary}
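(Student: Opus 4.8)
The final statement to prove is Corollary \ref{up}: $T^+_tu_-$ has a lower bound independent of $t$ and $u_-$, for $u_-\in\mathcal S_-$.

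The plan is to simply apply the preceding Lemma \ref{53} with the choice $\varphi = u_-$. First I would check that $u_-$ itself satisfies the condition ($\odot$): indeed $u_-\leq u_-$ trivially, and one may take $x_0$ to be any point of $M$, so that $u_-(x_0)=u_-(x_0)$ holds. (In fact, since we need a $(u_-,L,0)$-calibrated curve $\gamma_-:(-\infty,0]\to M$ ending at $x_0$ in the proof of Lemma \ref{53}, one should pick $x_0$ to be the endpoint of such a calibrated curve, whose existence is guaranteed by the weak KAM theory for backward weak KAM solutions — see Appendix \ref{swv}.) With ($\odot$) verified, Lemma \ref{53} immediately yields that $T^+_t u_-(x)$ has a lower bound independent of $t$ and of the function $\varphi=u_-$; in particular the bound does not depend on $u_-$.

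The one subtle point worth spelling out is the claim of independence of the bound from $u_-$. Tracing through Lemma \ref{53}, the lower bound obtained is $u_-(\gamma_-(-t+\mu)) - L_0\mu e^{\lambda\mu}$ with $L_0 = C_L + \lambda\|u_-\|_\infty$. At first glance this depends on $u_-$ through $\|u_-\|_\infty$ and through the value $u_-(\gamma_-(-t+\mu))$. However, the quantity $u_-(\gamma_-(-t+\mu)) - \lambda\|u_-\|_\infty\,\mu e^{\lambda\mu}$ is bounded below by $-\|u_-\|_\infty(1 + \lambda\mu e^{\lambda\mu})$, so the bound is genuinely uniform only after one normalizes $u_-$. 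The standard fix — and the one implicit in the statement — is that backward weak KAM solutions of (\ref{E}) are determined up to nothing (the equation is not additively invariant under constants because of (STD)/(LIP)), hence $\|u_-\|_\infty$ is bounded by a constant depending only on $H$: by the comparison-type estimates and coercivity, $\mathcal S_-$ is a uniformly bounded, equi-Lipschitz family. I would invoke this a priori bound on $\mathcal S_-$ (which follows from (CER) together with the fact that solutions satisfy $H(x,u_-,\partial_x u_-)=0$ pointwise a.e., giving a uniform sup-bound on $u_-$ and hence on $\partial_x u_-$), and then the bound in Lemma \ref{53} becomes a constant depending only on $H$ and $M$.

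The main obstacle, then, is not the deduction itself — which is a one-line application of Lemma \ref{53} — but making precise the uniformity over $\mathcal S_-$. Concretely, I would state and use: there is a constant $C_0 = C_0(H,M)$ with $\|u_-\|_\infty\leq C_0$ for every $u_-\in\mathcal S_-$, so that the lower bound furnished by Lemma \ref{53} with $\varphi = u_-$ can be taken to be $-C_0 - (C_L + \lambda C_0)\mu e^{\lambda\mu}$, a constant depending only on $H$ and $M$. This completes the proof of Corollary \ref{up}.
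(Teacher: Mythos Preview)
Your one-line deduction --- apply Lemma \ref{53} with $\varphi = u_-$, noting that condition ($\odot$) is trivially satisfied --- is exactly how the paper obtains the corollary (it is stated there without proof immediately after Lemma \ref{53}).

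However, your additional paragraph attempting to upgrade the conclusion to a bound uniform over all of $\mathcal S_-$ contains a false claim. It is \emph{not} true in general that $\|u_-\|_\infty$ is bounded uniformly over $u_-\in\mathcal S_-$. Take $H(x,u,p)=\|p\|^2$: this satisfies (C), (CON), (CER), and (LIP) for any $\lambda>0$, yet every constant function lies in $\mathcal S_-$, so no a priori sup-bound exists. Your heuristic that $H(x,u_-,\partial_x u_-)=0$ a.e.\ together with (CER) forces a bound on $u_-$ fails because (CER) only controls growth in $p$, not in $u$; without monotonicity in $u$ there is nothing to pin down the level of $u_-$.

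The correct reading is that the phrase ``independent of $u_-$'' in the corollary is simply the phrase ``independent of $\varphi$'' from Lemma \ref{53} after the substitution $\varphi=u_-$; the bound produced by Lemma \ref{53} still depends on the \emph{reference} solution $u_-$ appearing in condition ($\odot$), through $\|u_-\|_\infty$. In the only places the corollary is used (Proposition \ref{T+cu+} and Theorem \ref{m2'}), $u_-$ is fixed, so no uniformity over $\mathcal S_-$ is needed. Your extra argument should therefore be dropped, not repaired.
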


\begin{proposition}\label{T+cu+}
$T^+_t u_-$ converges to a forward weak KAM solution $u_+$ of (\ref{E}) uniformly as $t\to +\infty$.
\end{proposition}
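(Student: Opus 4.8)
The plan is to combine the two one-sided bounds already established with the monotonicity of the semigroup to obtain convergence, and then identify the limit as a forward weak KAM solution. By Corollary \ref{ti}, the family $t\mapsto T^+_tu_-(x)$ is decreasing in $t$ for every fixed $x\in M$, and by Corollary \ref{up} it is bounded below by a constant independent of $t$ (and of $x$, since the bound there is uniform). Hence the pointwise limit
\[
u_+(x):=\lim_{t\to+\infty}T^+_tu_-(x)=\inf_{t>0}T^+_tu_-(x)
\]
exists for every $x\in M$. First I would upgrade this to uniform convergence: since each $T^+_tu_-$ is Lipschitz and, by the regularity theory behind Theorem \ref{m1} (applied to the Lagrangian $L(x,-u,-\dot x)$ via Remark \ref{tplus}) together with Corollaries \ref{<u-} and \ref{up}, the family $\{T^+_tu_-\}_{t\ge 1}$ is equi-Lipschitz and uniformly bounded, it is precompact in $C(M)$ by Arzelà--Ascoli; a monotone sequence with a unique cluster point converges, so $T^+_tu_-\rightrightarrows u_+$. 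In particular $u_+\in Lip(M)$.

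Next I would check that $u_+$ is a fixed point of the forward semigroup, i.e. $T^+_su_+=u_+$ for all $s\ge 0$. Using the semigroup property $T^+_{t+s}u_-=T^+_s(T^+_tu_-)$ together with the contraction-type estimate of Proposition \ref{psg}(2) for $T^+$ (namely $\|T^+_s\varphi-T^+_s\psi\|_\infty\le e^{\lambda s}\|\varphi-\psi\|_\infty$), I would pass to the limit $t\to+\infty$ on both sides: the left-hand side tends to $u_+$ by definition, while the right-hand side satisfies
\[
\|T^+_s(T^+_tu_-)-T^+_su_+\|_\infty\le e^{\lambda s}\|T^+_tu_--u_+\|_\infty\to 0,
\]
so $T^+_su_+=u_+$. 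By the characterization of weak KAM solutions in terms of fixed points of the Lax--Oleinik semigroup (Proposition \ref{fix} in the appendix, together with Remark \ref{S1}, which already records that a uniformly bounded $T^+_t\varphi$ produces a forward weak KAM solution through the half-relaxed limit), $u_+$ is a forward weak KAM solution of (\ref{E}); alternatively one invokes Remark \ref{S1} directly, since $T^+_tu_-$ is bounded independently of $t$, and then notes that the monotone limit coincides with the half-relaxed $\limsup$ defining $\hat u_-$.

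The main obstacle I expect is the passage from pointwise to uniform convergence in a setting where the contact Hamilton flow is unavailable, so one cannot argue by compactness of minimizing orbits as in the Tonelli case. The remedy is precisely the equi-Lipschitz estimate: one must know that the Lipschitz constants of $T^+_tu_-$ do not blow up as $t\to+\infty$. This follows because $T^+_tu_-$ solves (in the appropriate backward sense, after the reflection $u\mapsto -u$, $\dot x\mapsto -\dot x$ of Remark \ref{tplus}) a Hamilton--Jacobi equation whose solutions have gradient bounds coming from (CER) once $\|\partial_t(T^+_tu_-)\|_\infty$ is controlled, and the latter is controlled uniformly in $t$ by the sandwiching $T^+_{t+\mu}u_-(x)\le u_-(\gamma_-(-t+\mu))+L_0\mu e^{\lambda\mu}$ and $T^+_tu_-\le u_-$ from Corollaries \ref{<u-} and \ref{up}, exactly as in the proof of Lemma \ref{53}. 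Once this uniform Lipschitz bound is in hand, Arzelà--Ascoli and the monotonicity close the argument, and the identification of the limit as a forward weak KAM solution is routine via the stability of viscosity solutions and Proposition \ref{fix}.
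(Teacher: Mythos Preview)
Your alternative approach at the end --- invoking Remark \ref{S1} and identifying the monotone pointwise limit with the half-relaxed $\limsup$ --- is exactly the paper's argument. The paper proceeds as follows: by Remark \ref{S1}, the upper half-relaxed limit $\hat u_+$ is a forward weak KAM solution, hence Lipschitz and in particular continuous; by Corollary \ref{ti} the pointwise limit $\lim_{t\to+\infty}T^+_tu_-$ exists, and the monotonicity in $t$ gives both $\lim_{t}T^+_tu_-\leq \hat u_+$ (trivially) and $T^+_tu_-\geq \hat u_+$ for every $t$ (since $T^+_tu_-$ decreases); hence the pointwise limit equals $\hat u_+$. Then Dini's theorem (monotone convergence to a continuous function on a compact space) upgrades this to uniform convergence, with no need to check that $u_+$ is a fixed point separately.

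Your primary route through equi-Lipschitz bounds and Arzel\`a--Ascoli is in principle fine, but the justification you sketch has a soft spot. The Lipschitz estimates actually proved in the paper (Lemma \ref{begin}(ii) and inequality (\ref{partialt})) give $\|\partial_t u(\cdot,t)\|_\infty\le Ke^{\lambda t}$, which blows up as $t\to\infty$ and so does not yield a $t$-uniform spatial Lipschitz bound via (CER). Your proposed control of $\|\partial_t(T^+_tu_-)\|_\infty$ via the sandwiching in Corollaries \ref{<u-}--\ref{up} and Lemma \ref{53} bounds only the values of $T^+_tu_-$, not its time derivative. One can repair this (e.g.\ by exploiting domination $T^+_tu_-\prec L$ once one knows it, or a regularizing effect at fixed time applied to the uniformly bounded family $\{T^+_{t-1}u_-\}$), but the paper's Dini argument bypasses the issue entirely: it only needs continuity of the limit, and that continuity is supplied by the half-relaxed construction, not by any a priori equi-Lipschitz estimate on the approximating family.
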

\begin{proof}
We first recall that $T^+_t\varphi:=-\bar T^-_t(-\varphi)$, where $\bar T^-_t$ denotes the backward Lax-Oleinik semigroup associated to $L(x,-u,-\dot{x})$. Since $T^+_tu_-$ is decreasing in $t$, the function $u(x,t):=\bar T^-_t(-u_-)$ is increasing in $t$. Thus, $\partial_t u(x,t)\geq 0$ holds in the viscosity sense. Since $u(x,t)$ is the viscosity solution of $\partial_t u+H(x,-u,-\partial_x u)=0$, we have $H(x,-u,-\partial_x u)\leq 0$. Since $T^+_tu_-$ has a bound independent of $t$, $u(x,t)$ has a bound independent of $t$. We conclude that $\|\partial_x T^+_tu_-\|_\infty=\|\partial_x u(x,t)\|_\infty$ has a bound independent of $t$ by (CER). Corollaries \ref{ti} and \ref{up} imply that the pointwise limit $u_+(x)=\lim_{t\rightarrow+\infty}T^+_tu_-(x)$ exists. Since $\|\partial_x T^+_tu_-\|_\infty$ has a bound independent of $t$, the limit function $u_+$ is continuous. By the Dini theorem, the family $T^+_t u_-$ converges uniformly  to $u_+$. It remains to prove that $u_+$ is a fixed point of $T^+_t$. For each $t>0$, by Proposition \ref{psg} (2), we have
\[\|T^+_{t+s}u_--T^+_tu_+\|_\infty\leq e^{\lambda t}\|T^+_s u_--u_+\|_\infty.\]
Letting $s\to+\infty$, we get $T^+_t u_+=u_+$.
\end{proof}

\begin{proposition}\label{xt}
The set $\mathcal I_{u_-}$ is nonempty. More precisely, let $\gamma_-:(-\infty,0]\rightarrow M$ be a $(u_-,L,0)$-calibrated curve. Define
\[\alpha(\gamma_-):=\{x\in M:\ \textrm{there\ exists\ a\ sequence}\ t_n\rightarrow-\infty\ \textrm{such\ that}\ \text{d}(\gamma_-(t_n),x)\to 0\}.\]
Then $\alpha(\gamma_-)$ is nonempty, and it is contained in $\mathcal I_{u_-}$.
\end{proposition}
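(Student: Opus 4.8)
The plan is to prove the two assertions separately: first that $\alpha(\gamma_-)$ is nonempty, then that $\alpha(\gamma_-)\subseteq\mathcal I_{u_-}$. The nonemptiness is immediate from compactness of $M$: the sequence $\gamma_-(-n)$, $n\in\mathbb N$, has a convergent subsequence since $M$ is a closed manifold, hence an accumulation point, which by definition belongs to $\alpha(\gamma_-)$. So the substance is the inclusion.

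Fix $x\in\alpha(\gamma_-)$ and a sequence $t_n\to-\infty$ with $d(\gamma_-(t_n),x)\to 0$. To show $x\in\mathcal I_{u_-}$ I must show $u_-(x)=\lim_{t\to+\infty}T^+_tu_-(x)$, which by Proposition \ref{T+cu+} equals $\hat u_+(x)$; and by Corollary \ref{<u-} we already know $\hat u_+\le u_-$, so it suffices to prove $T^+_tu_-(x)\ge u_-(x)$ for a sequence of $t\to+\infty$, or equivalently to produce, for each fixed $T>0$, the lower bound $\lim_{t\to+\infty}T^+_tu_-(x)\ge u_-(x)$. The key mechanism is Lemma \ref{52}: since $\gamma_-$ is $(u_-,L,0)$-calibrated, taking $\varphi=u_-$ (which trivially satisfies ($\odot$) with $x_0$ any basepoint) gives $T^+_tu_-(\gamma_-(-t))=u_-(\gamma_-(-t))$ — that is, $\hat u_+(\gamma_-(s))=u_-(\gamma_-(s))$ along the whole negative half-orbit. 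So the calibrated curve lies entirely in $\mathcal I_{u_-}$. It remains to transfer this from the points $\gamma_-(t_n)$, which lie in $\mathcal I_{u_-}$, to their limit $x$. Here I would use that $\hat u_+$ and $u_-$ are both continuous (indeed Lipschitz) on $M$: since $\hat u_+(\gamma_-(t_n))=u_-(\gamma_-(t_n))$ for all $n$ and $\gamma_-(t_n)\to x$, passing to the limit yields $\hat u_+(x)=u_-(x)$, i.e. $x\in\mathcal I_{u_-}$.

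One subtlety to address carefully is why $\gamma_-(t_n)\in\mathcal I_{u_-}$ in the form needed: the definition of $\mathcal I_{u_-}$ is $\{x: u_-(x)=\lim_{t\to+\infty}T^+_tu_-(x)\}$, and Lemma \ref{52} gives $T^+_su_-(\gamma_-(-s))=u_-(\gamma_-(-s))$ for each $s$, but with the time $s$ tied to the point $\gamma_-(-s)$ rather than a fixed point with $t\to+\infty$. The clean route is to bypass this and argue directly with $\hat u_+$: Lemma \ref{52} with $\varphi=u_-$ says $\hat u_+(\gamma_-(-s))=u_-(\gamma_-(-s))$ for all $s>0$ (evaluate the identity $T^+_s u_-(\gamma_-(-s))=u_-(\gamma_-(-s))$ and combine with monotonicity from Corollary \ref{ti} plus $\hat u_+\le u_-$ to see the pointwise limit at $\gamma_-(-s)$ is exactly $u_-(\gamma_-(-s))$). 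Then continuity of $\hat u_+$ and $u_-$ finishes the job. The main obstacle, then, is not any hard estimate but getting the quantifiers right — making sure Lemma \ref{52}'s pointwise-along-the-curve identity genuinely implies $\gamma_-(-s)\in\mathcal I_{u_-}$ before taking the limit — and this is handled by invoking the monotone convergence $T^+_tu_-\downarrow\hat u_+$ together with Lemma \ref{52} and Corollary \ref{<u-}.
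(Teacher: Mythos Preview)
Your identification of the key ingredients is correct: nonemptiness of $\alpha(\gamma_-)$ comes from compactness of $M$, and Lemma \ref{52} with $\varphi=u_-$ yields the diagonal identity $T^+_t u_-(\gamma_-(-t))=u_-(\gamma_-(-t))$ for all $t>0$. You also correctly spot the subtlety: this identity ties the time $t$ to the point $\gamma_-(-t)$, so it does not directly say $\gamma_-(-t)\in\mathcal I_{u_-}$.

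The gap is in your proposed fix. From $T^+_s u_-(\gamma_-(-s))=u_-(\gamma_-(-s))$, monotonicity of $t\mapsto T^+_t u_-$ (Corollary \ref{ti}), and $\hat u_+\le u_-$ (Corollary \ref{<u-}), you only obtain $\hat u_+(\gamma_-(-s))\le u_-(\gamma_-(-s))$; none of these three facts furnishes the reverse inequality at a \emph{fixed} $s$. Indeed the assertion ``the calibrated curve lies entirely in $\mathcal I_{u_-}$'' is generally false: already in the $u$-independent case a backward calibrated curve from a generic point typically passes through points outside the projected Aubry set (e.g.\ for $H(x,p)=\tfrac12|p|^2-V(x)$ with $V$ having a unique minimum, the Aubry set is a single point while calibrated curves sweep through an open region). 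So you cannot first put each $\gamma_-(-s)$ into $\mathcal I_{u_-}$ and then close up.

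The paper's route avoids this entirely by exploiting the \emph{uniform} convergence $T^+_t u_-\to u_+$ from Proposition \ref{T+cu+}. For $x\in\alpha(\gamma_-)$ choose $t_n\to+\infty$ with $\gamma_-(-t_n)\to x$. Then
\[
|T^+_{t_n}u_-(\gamma_-(-t_n))-u_+(x)|\le \|T^+_{t_n}u_-\,-\,u_+\|_\infty+|u_+(\gamma_-(-t_n))-u_+(x)|\to 0,
\]
using uniform convergence for the first term and Lipschitz continuity of $u_+$ for the second. Combined with the diagonal identity and continuity of $u_-$, this gives $u_-(x)=\lim_n u_-(\gamma_-(-t_n))=\lim_n T^+_{t_n}u_-(\gamma_-(-t_n))=u_+(x)$, i.e.\ $x\in\mathcal I_{u_-}$. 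The uniformity is precisely what lets you pass to the limit along the diagonal where both time and basepoint move; this is the missing idea in your argument.
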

\begin{proof}
Let $\gamma_-:(-\infty,0]\rightarrow M$ be a $(u_-,L,0)$-calibrated curve. By Lemma \ref{52}, for each $t>0$ we have \[T^+_tu_-(\gamma_-(-t))=u_-(\gamma_-(-t)).\]  Since $M$ is compact, the set $\alpha(\gamma_-)$ is nonempty. Let $x^*\in \alpha(\gamma_-)$ and $t_n\rightarrow+\infty$ such that $d(\gamma_-(-t_n),x^*)\to 0$. The following inequality holds
\begin{align*}
  |T^+_{t_n}u_-(\gamma_-(-t_n))-u_+(x^*)|\leq& |T^+_{t_n}u_-(\gamma_-(-t_n))-u_+(\gamma_-(-t_n))|\\
  &+|u_+(\gamma_-(-t_n))-u_+(x^*)|.
\end{align*}
 The function $u_+$ is Lipschitz (see Proposition \ref{u<LLip}). Thus, as $t_n\to +\infty$,
 \[|u_+(\gamma_-(-t_n))-u_+(x^*)|\to 0.\]
Since $T^+_t u_-$ converges to $u_+$ uniformly, then
\[|T^+_{t_n}u_-(\gamma_-(-t_n))-u_+(\gamma_-(-t_n))|\to 0.\]
 Therefore, the limit of $T^+_{t_n}u_-(\gamma_-(-t_n))$ is $u_+(x^*)$. On the other hand, we have \[T^+_{t_n}u_-(\gamma_-(-t_n))=u_-(\gamma_-(-t_n)),\] which tends to $u_-(x^*)$ by the continuity of $u_-$. We conclude that $u_+(x^*)=u_-(x^*)$. It means $\alpha(\gamma_-)\subseteq \mathcal I_{u_-}$.
\end{proof}

\section{A  comparison result for the solutions of (\ref{E})}\label{pm27}

According to \cite[Theorem 3.2]{inc}, the viscosity solution of
\[H(x,-u(x),-\partial_x u(x))=0\]
is unique. By Proposition \ref{fix}, the forward weak KAM solution $u_+$ of (\ref{E}) is also unique. Define $u_-=\lim_{t\rightarrow+\infty}T^-_t u_+$, then the conjugate pair $(u_-,u_+)$ is unique. According to Proposition \ref{T+cu+}, $T^+_t v_-$ converges to the unique forward weak KAM solution $u_+$ uniformly as $t\rightarrow+\infty$ and $u_+\leq v_-$ for all $v_-\in \mathcal{S}_-$.

\bigskip

\noindent\textit{Proof of Theorem \ref{four3}.} We first prove the result (1). By Proposition \ref{xt}, the set $\mathcal I_{v_-}$ is nonempty for each $v_-\in\mathcal S_-$. For $x\in\mathcal I_{v_2}$, we have \[u_+(x)\leq v_1(x)\leq v_2(x)=u_+(x),\] then $v_1(x)=v_2(x)=u_+(x)$, that is, $x\in\mathcal I_{v_1}$.

We then prove the result (2). For each $x\in M$, let $\gamma_2:(-\infty,0]\rightarrow M$ be a $(v_2,L,0)$-calibrated curve  with $\gamma_2(0)=x$. By Proposition \ref{xt}, there is a $t_0>0$ large enough, such that $\gamma_2(-t_0)\in\mathcal O$, where  $\mathcal O$ denotes a neighborhood of $\mathcal I_{v_2}$. Define
\begin{equation*}
  F(s)=v_1(\gamma_2(s))-v_2(\gamma_2(s)),\quad s\in[-t_0,0].
\end{equation*}
 If $v_1(x)>v_2(x)$, then $F(0)=v_1(x)-v_2(x)>0$ and $F(-t_0)=v_1(\gamma_2(-t_0))-v_2(\gamma_2(-t_0))\leq 0$. Then there is $\sigma\in [-t_0,0)$ such that $F(\sigma)=0$ and $F(s)>0$ for all $s\in(\sigma,0]$. By definition we have
\begin{equation*}
  v_1(\gamma_2(s))-v_1(\gamma_2(\sigma))\leq \int_\sigma^s L(\gamma_2(\tau),v_1(\gamma_2(\tau)),\dot \gamma_2(\tau))d\tau,
\end{equation*}
and
\begin{equation*}
  v_2(\gamma_2(s))-v_2(\gamma_2(\sigma))=\int_\sigma^s L(\gamma_2(\tau),v_2(\gamma_2(\tau)),\dot \gamma_2(\tau))d\tau,
\end{equation*}
which implies
\begin{equation*}
  F(s)\leq F(\sigma)+\lambda \int_\sigma^s F(\tau)d\tau.
\end{equation*}
By the Gronwall inequality we conclude $F(s)\equiv 0$ for all $s\in[\sigma,0]$, which contradicts $F(0)>0$. We  conclude $v_1\leq v_2$ on $M$.

The result (3) follows directly from (2). The proof is now complete.
\qed


\section{On the example (\ref{E0})}\label{exxxam}

Let $u_+$ be the unique forward weak KAM solution of (\ref{E0}). We have already known that $u_+\leq v_-$ for each viscosity solution $v_-$ of (\ref{E0}). It is sufficient to show $u_+(x)<u_2(x)$ for all $x\in(-1,1]\backslash \{0\}$. By the symmetry of $u_2$, we only need to consider $x\in (0,1]$.

By \cite[Theorem 5.3.6]{cann} and Proposition \ref{fix}, each $u_+$ is a semiconvex function with linear modulus. Note that $u_+(x)\leq u_2(x)$. Moreover, $u_+$ can not be equal to $u_2$ at $x=1$. In fact, if $u_2=u_+$ at $x=1$, combining with the semiconcavity of $u_2$, then $u_2$ is differentiable at this point. Let us recall
\[u_2(x)=\frac{\lambda-\sqrt{\lambda^2-4}}{2}V(x),\]
 and $V$ is not differentiable at $x=1$. This is a contradiction.

We then assume that there exists $x_0\in (0,1)$ such that $u_+(x_0)=u_2(x_0)$. Since $u_2(x)$ is differentiable for each $x\in [0,1)$, then $u_2$ satisfies
\[-\lambda u(x)+\frac{1}{2}|u'(x)|^2+V(x)=0\]
in the classical sense for $x\in [0,1)$. Note that $|u'_2(x)|>0$ for $x\in (0,1)$, we have $\lambda u_2(x)>V(x)$ for all $x\in (0,1)$. For $z>V(x)$, we set
\begin{equation*}
  f(x,z):=\lambda \sqrt{2(z-V(x))},
\end{equation*}
then the function $(x,z)\mapsto f(x,z)$ is of class $C^1$ on
\[\{(x,z)\in \R^2\ |\ x\in (0,1),\ z>V(x)\}.\]
Given $\varepsilon\in (0,x_0)$, denote
\[\Omega_\varepsilon:=\left\{(x,z)\in \R^2\ |\ x\in [\eps,1),\ z\in \left[\frac{1}{2}\lambda u_2(x)+\frac{1}{2}V(x),\frac{3}{2}\lambda u_2(x)-\frac{1}{2}V(x)\right]\right\}.\]
It follows that
\[\left|\frac{\partial f}{\partial z}\right|=\frac{\lambda}{\sqrt{2(z-V(x))}}\leq \frac{\lambda}{\sqrt{\lambda u_2(\eps)-V(\eps)}}<+\infty.\]
 By the classical theory of ordinary differential equations, for $x_0\in (0,1)$, $\lambda u_2(x)$ is the unique solution of
\begin{equation}\label{ode}
  \frac{dz}{dx}=f(x,z),\quad z(x_0)=\lambda u_2(x_0),\quad \text{on}\ \Omega_\eps.
\end{equation}

We assert that $u_+$ is differentiable on $(0,1)$. If the assertion is true, then $u_+$ satisfies (\ref{E0}) in the classical sense. Since $u_+\leq u_2$ and $u_+(x_0)=u_2(x_0)$, $\lambda u_+$ is the unique solution of (\ref{ode}) on $\Omega_\eps$. That is, $u_+=u_2$ on $(\eps,1)$. Moreover, $u_+=u_2$ on $\mathbb{S}$ by continuity and the arbitrariness of $\eps$. This contradicts the semiconvexity of $u_+$. Therefore, we have $u_+(x)<u_2(x)$ for all $x\in (0,1]$.

It remains to show that $u_+$ is differentiable on $(0,1)$. Assume there exists $y_0\in (0,1)$ such that $u_+$ is not differentiable at $y_0$. By \cite[Lemma 2.2]{WWY4}, \cite[Theorem 3.3.6]{cann}, and Proposition \ref{fix}, we have
\[D^*u_+(x)=\{p\in D^-u_+(x)\ |\ H(x,u_+(x),p)=0\}, \quad D^-u_+(x)=\text{co}D^*u_+(x),\]
where $D^*$ stands for the set of all reachable gradients and ``co" denotes the convex hull.
It follows from (\ref{E0}) that
\[D^*u_+(y_0)=\{\pm l\},\]
where $l$ is a positive constant.
  By the semiconvexity of $u_+$, there exists $y_1\in (0,y_0)$ such that $u_+(y_1)>u_+(y_0)$. Moreover, there is $z_0\in (0,y_0)$  achieving a local maximum of $u_+$. By using the semiconvexity of $u_+$ again, it is differentiable at $z_0$, then $u'_+(z_0)=0$. By (\ref{E0}), we have \[-\lambda u_+(z_0)+V(z_0)=0.\] Since $u'_+(x)$ exists for almost all $x$, there is $z_1\in (z_0,y_0)$ such that $u'_+(z_1)$ exists. By the Newton-Leibniz formula, one can require $|u'_+(z_1)|> 0$ and $u_+(z_0)\geq u_+(z_1)\geq 0$. By definition, we  have $V(z_1)>V(z_0)$. Therefore
\begin{equation*}
  -\lambda u_+(z_1)+\frac{1}{2}|u'_+(z_1)|^2+V(z_1)>-\lambda u_+(z_0)+V(z_0)=0,
\end{equation*}
which contradicts that $u_+$ satisfies (\ref{E0}) at $z_1$ in the classical sense.
\qed

\bigskip

{\bf Acknowledgement}\ \ The authors would like to thank Professor Wei Cheng and Kaizhi Wang for many useful discussions.

\appendix

\section{One dimensional variational problems}\label{preli}

The following results are useful in the proof of the existence and regularity of the minimizers in (\ref{T-}), which all come from \cite{One} and \cite{gam}. The results in \cite{One,gam} were  stated for the case in the Euclidean space $\mathbb R^n$. It is not difficult to generalize them for the case in the Riemannian manifold $M$.

\subsection{$\Gamma$-convergence}\label{A.11}
\begin{lemma}\textbf{}\label{TM}
Let $J$ be a bounded interval. Assume that $F(t,x,\dot x)$ is lower semicontinuous, convex in $\dot x$, and has a lower bound. Then the integral functional
\begin{equation*}
  \mathcal F(\gamma)=\int_J F(s,\gamma(s),\dot \gamma(s))ds
\end{equation*}
is sequentially weakly lower semicontinuous in $W^{1,1}(J,M)$.
\end{lemma}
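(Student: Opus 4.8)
The statement is the classical lower semicontinuity theorem of Tonelli type, and the plan is to reduce it to the scalar Euclidean case and invoke a standard result (e.g.\ from \cite{One} or \cite{gam}). Since $M$ is a compact Riemannian manifold, I would first cover a convergent sequence $\gamma_n\rightharpoonup\gamma$ in $W^{1,1}(J,M)$ by finitely many coordinate charts: because $\gamma$ is continuous and $J$ is compact, one can subdivide $J$ into finitely many closed subintervals $J_1,\dots,J_N$ such that $\gamma(J_i)$ lies in a single chart $U_i$, and (after passing to a tail of the sequence, using $W^{1,1}\hookrightarrow C^0$ and the uniform convergence this entails on $J$) $\gamma_n(J_i)\subset U_i$ as well for all large $n$. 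Additivity of $\mathcal F$ over the $J_i$ then reduces everything to the case $M=\mathbb R^m$ on a bounded interval.

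\textbf{The scalar/Euclidean core.} On each chart, $F$ pulls back to a function, still lower semicontinuous in all variables, convex in $\dot x$, and bounded below (the lower bound being the only structural hypothesis that survives the chart change unproblematically; convexity in $\dot x$ is preserved because the coordinate change acts linearly on $\dot x$ fiberwise, and lower semicontinuity is a topological property). Then I would apply the Tonelli--Ioffe type lower semicontinuity theorem: if $x_n\rightharpoonup x$ in $W^{1,1}(J_i,\mathbb R^m)$ (hence $x_n\to x$ uniformly and $\dot x_n\rightharpoonup\dot x$ in $L^1$), and $F$ is $\mathcal L\times\mathcal B$-measurable, lower semicontinuous, convex in the last variable and bounded below, then $\liminf_n\int_{J_i}F(s,x_n,\dot x_n)\,ds\ge\int_{J_i}F(s,x,\dot x)\,ds$. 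This is exactly the statement asserted to be available from \cite{One,gam}; since the problem says I may assume results stated earlier in the excerpt and this lemma is presented as a direct transcription of those references, the bulk of the work is the chart reduction, not a fresh proof of semicontinuity.

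\textbf{Main obstacle.} The delicate point is the passage from weak convergence in $W^{1,1}(J,M)$ — which must be given an intrinsic meaning on the manifold — to weak convergence in the charts, and ensuring the finite subdivision is legitimate uniformly along the sequence. Weak $W^{1,1}$ convergence on a compact interval forces strong $C^0$ convergence (Dunford--Pettis plus Arzel\`a--Ascoli, or directly: $\dot\gamma_n$ equi-integrable $\Rightarrow$ $\gamma_n$ equi-continuous, and pointwise limits are controlled), so the images $\gamma_n(J_i)$ do eventually land in a fixed chart; once inside, the chart map being a diffeomorphism turns weak $W^{1,1}$ convergence into weak $W^{1,1}$ convergence of the coordinate representatives. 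I would spell out this equi-continuity/uniform-convergence step carefully, note that only finitely many charts are needed by compactness of $\gamma(J)$ and a Lebesgue-number argument, and then concatenate the per-chart inequalities by additivity of the integral to conclude.
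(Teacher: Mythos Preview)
Your proposal is correct and aligns with the paper's treatment: the paper does not give a proof of this lemma at all, but states that it is taken from \cite{One} and \cite{gam} (where it is proved in $\mathbb R^n$) and remarks that ``one can easily generalize them for the case in the Riemannian manifold $M$.'' Your chart-reduction argument is precisely that easy generalization, written out in full.
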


\begin{proposition}\textbf{}\label{Tonelli}
Let $M$ be a compact connected smooth manifold. Denote by $I=(a,b)\subset R$ a bounded interval, and let $F(t,x,\dot{x})$ be a Lagrangian defined on $I\times TM$. Assume $F$ satisfies
\begin{itemize}
\item [(i)] $F(t,x,\dot x)$ is measurable in $t$ for all $(x,\dot x)$, and continuous in $(x,\dot x)$ for almost every $t$;

\item [(ii)] $F(t,x,\dot{x})$ is convex in $\dot{x}$;

\item [(iii)] $F(t,x,\dot{x})$ is superlinear in $\dot{x}$.
\end{itemize}
\noindent Then for any given boundary condition $x_0$ and $x_1\in M$, there exists a minimizer of $\int_I F(t,x,\dot{x})dt$ in $\{x(t)\in W^{1,1}([a,b],M):\ x(a)=x_0,\ x(b)=x_1\}$.
\end{proposition}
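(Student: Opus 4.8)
The plan is to run the direct method of the calculus of variations. Write $\mathcal{A}:=\{x\in W^{1,1}([a,b],M):\ x(a)=x_0,\ x(b)=x_1\}$; this is nonempty because $M$ is connected, so $x_0$ and $x_1$ can be joined by a constant-speed geodesic. If $m:=\inf_{\gamma\in\mathcal{A}}\int_I F(s,\gamma(s),\dot\gamma(s))\,ds=+\infty$ there is nothing to prove, so I may assume $m<+\infty$ and fix a minimizing sequence $\{\gamma_n\}\subset\mathcal{A}$ with $\int_I F(s,\gamma_n,\dot\gamma_n)\,ds\to m$. Fixing an isometric embedding $M\hookrightarrow\mathbb{R}^N$ — so that the Riemannian norm of $\dot\gamma_n$ coincides with the Euclidean one and $W^{1,1}(I,M)\subset W^{1,1}(I,\mathbb{R}^N)$ — places the whole problem inside Euclidean space, which is the setting of the results quoted from \cite{One,gam}.

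Next I would extract compactness of $\{\gamma_n\}$ from the sole coercivity hypothesis (iii). Since $F(s,\gamma_n,\dot\gamma_n)\ge\Theta(\|\dot\gamma_n\|)$ and the actions are bounded, $\sup_n\int_I\Theta(\|\dot\gamma_n\|)\,ds<+\infty$; because $\Theta(r)/r\to+\infty$, the de la Vallée Poussin criterion shows that $\{\dot\gamma_n\}$ is uniformly integrable in $L^1(I,\mathbb{R}^N)$, hence weakly relatively compact there by the Dunford--Pettis theorem. Uniform integrability also yields a uniform modulus of continuity for $\{\gamma_n\}$ via $|\gamma_n(t)-\gamma_n(t')|\le\int_{t'}^{t}\|\dot\gamma_n\|\,ds$, while compactness of $M$ makes $\{\gamma_n\}$ uniformly bounded. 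So, along a subsequence (not relabeled), $\gamma_n\to\gamma$ uniformly on $[a,b]$ and $\dot\gamma_n\rightharpoonup v$ weakly in $L^1$; testing against smooth functions identifies $v$ with the distributional derivative of $\gamma$, so $\gamma\in W^{1,1}(I,\mathbb{R}^N)$. Since $M$ is closed and the convergence is uniform, $\gamma$ takes values in $M$ with $\dot\gamma(s)\in T_{\gamma(s)}M$ a.e., and the endpoint conditions pass to the uniform limit, so $\gamma\in\mathcal{A}$.

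Finally I would close with lower semicontinuity. The integrand $F$ is measurable in $t$, continuous in $(x,\dot x)$, convex in $\dot x$ by (ii), and bounded below (by $\inf\Theta$, as follows from (iii)); hence Lemma \ref{TM} applies and $\gamma\mapsto\int_I F(s,\gamma,\dot\gamma)\,ds$ is sequentially weakly lower semicontinuous on $W^{1,1}(I,M)$. As $\gamma_n\to\gamma$ uniformly (hence in $L^1$) and $\dot\gamma_n\rightharpoonup\dot\gamma$ weakly in $L^1$ — which together are exactly weak convergence in $W^{1,1}$ — we obtain $\int_I F(s,\gamma,\dot\gamma)\,ds\le\liminf_n\int_I F(s,\gamma_n,\dot\gamma_n)\,ds=m$, while $\gamma\in\mathcal{A}$ forces the reverse inequality. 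Thus $\gamma$ is the desired minimizer.

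The step I expect to be the main obstacle is the compactness one: the only coercivity at hand is the superlinear lower bound in (iii), and converting it into genuine weak-$L^1$ compactness of the velocities is precisely the de la Vallée Poussin / Dunford--Pettis mechanism; moreover, one must check that transporting the argument to the manifold through the embedding neither breaks the identification $\dot\gamma=v$ nor lets the limit curve leave $M$. The lower semicontinuity step, by contrast, is entirely supplied by Lemma \ref{TM}.
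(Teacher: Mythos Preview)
The paper does not supply its own proof of this proposition: it is recorded in Appendix~A as a classical fact imported from \cite{One} and \cite{gam}. Your argument is exactly the standard direct-method proof one finds in those references (superlinearity $\Rightarrow$ de la Vall\'ee Poussin uniform integrability $\Rightarrow$ Dunford--Pettis weak-$L^1$ compactness of velocities, then weak lower semicontinuity of the action), so the approach is correct and matches what the paper is implicitly relying on.

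One small mismatch worth flagging: Lemma~\ref{TM} as stated requires $F$ to be jointly lower semicontinuous, whereas the hypotheses here are of Carath\'eodory type (measurable in $t$, continuous in $(x,\dot x)$), which is not the same thing. The sequential weak lower semicontinuity you need still holds under Carath\'eodory $+$ convexity in $\dot x$ $+$ a lower bound---this is Ioffe's theorem \cite{Io}, which the paper lists in its bibliography---so the argument goes through, but you should invoke that result rather than Lemma~\ref{TM} directly.
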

\begin{definition}\textbf{}
Let $X$ be a topological space. Given a sequence $F_n:X\rightarrow [-\infty,+\infty]$, then we define
\begin{equation*}
  (\Gamma-\liminf_{n\rightarrow +\infty} F_n)(x)=\sup_{U\in \mathcal N(x)}\liminf_{n\rightarrow +\infty}\inf_{y\in U} F_n(y),
\end{equation*}
\begin{equation*}
  (\Gamma-\limsup_{n\rightarrow +\infty} F_n)(x)=\sup_{U\in \mathcal N(x)}\limsup_{n\rightarrow +\infty}\inf_{y\in U} F_n(y).
\end{equation*}
Here the neighbourhoods $\mathcal N(x)$ can be replaced by the topological basis. When the superior limit equals to the inferior limit, we can define the $\Gamma$-limit.
\end{definition}
\begin{definition}
Let $X$ be a topological space. For every function $F:X\rightarrow [-\infty,+\infty]$, the lower semicontinuous envelope $sc^- F$ of $F$ is defined for every $x\in X$ by
\begin{equation*}
  (sc^-F)(x)=\sup_{G\in\mathcal G(F)} G(x),
\end{equation*}
where $\mathcal G(F)$ is the set of all lower semicontinuous functions $G$ on $X$ such that $G(y)\leq F(y)$ for every $y\in X$.
\end{definition}
\begin{lemma}\label{inc}
If $F_n$ is an increasing sequence, then
\begin{equation*}
  \Gamma-\lim_{n\rightarrow +\infty}F_n=\lim_{n\rightarrow +\infty} sc^- F_n=\sup_{n\in\mathbb N} sc^- F_n.
\end{equation*}
\end{lemma}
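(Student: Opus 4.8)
The plan is to prove the two equalities separately: the right-hand one is essentially immediate from monotonicity, while the left-hand one rests on the classical description of the lower semicontinuous envelope as a lower limit. First I would observe that monotonicity of $(F_n)$ propagates to the envelopes: if $F_n\le F_{n+1}$, then $sc^-F_n$, being lower semicontinuous and bounded above by $F_n\le F_{n+1}$, is bounded above by $sc^-F_{n+1}$ as well. Hence $(sc^-F_n)_n$ is pointwise increasing and $\lim_{n\to+\infty}sc^-F_n=\sup_{n\in\mathbb{N}}sc^-F_n$ pointwise in $[-\infty,+\infty]$, which settles the second equality.

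Next I would record the pointwise formula
\begin{equation*}
  (sc^-F)(x)=\sup_{U\in\mathcal{N}(x)}\inf_{y\in U}F(y),
\end{equation*}
valid for any $F\colon X\to[-\infty,+\infty]$. Denoting the right-hand side by $G(x)$, the verification is short: (a) $G\le F$ since $x\in U$; (b) $G$ is lower semicontinuous, because $G(x)>t$ forces some open $U\ni x$ with $\inf_U F>t$, and then $G\ge t$ on all of $U$, so $\{G>t\}$ is open; (c) $G$ dominates every lower semicontinuous $H\le F$, since for $t<H(x)$ the set $\{H>t\}$ is a neighbourhood of $x$ on which $F\ge H>t$, giving $G(x)\ge t$, and letting $t\uparrow H(x)$ yields $G\ge H$. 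Together (a)--(c) identify $G$ with $sc^-F$.

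Finally I would exploit monotonicity once more, this time inside the definitions of the $\Gamma$-limits. For a fixed $U\in\mathcal{N}(x)$ the sequence $n\mapsto\inf_{y\in U}F_n(y)$ is increasing, so both $\liminf_{n\to+\infty}$ and $\limsup_{n\to+\infty}$ of it coincide with $\sup_{n\in\mathbb{N}}$. Therefore
\begin{equation*}
  (\Gamma-\liminf_{n\to+\infty}F_n)(x)=(\Gamma-\limsup_{n\to+\infty}F_n)(x)=\sup_{U\in\mathcal{N}(x)}\sup_{n\in\mathbb{N}}\inf_{y\in U}F_n(y),
\end{equation*}
so the $\Gamma$-limit exists. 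Interchanging the two suprema (which commute) and invoking the envelope formula from the previous step rewrites the right-hand side as $\sup_{n\in\mathbb{N}}(sc^-F_n)(x)$, which is exactly the claimed identity.

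I do not expect a serious obstacle: the only non-formal ingredient is the envelope formula in the middle step, and even that is routine. The one place where the hypothesis that $(F_n)$ is increasing is genuinely used — and where a reader should pause — is the collapse of $\liminf$ and $\limsup$ in $n$ to a plain supremum in the last step; this is precisely what forces the $\Gamma$-limit to exist and to agree with the monotone limit of the relaxed functionals.
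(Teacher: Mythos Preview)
Your argument is correct. Note, however, that the paper does not actually prove this lemma: it is quoted in the appendix as a known fact from Dal~Maso's monograph on $\Gamma$-convergence (reference~\cite{gam}), so there is no ``paper's own proof'' to compare against. What you wrote is essentially the standard proof one finds there (cf.\ \cite[Proposition~5.4]{gam}): monotonicity collapses the $\liminf/\limsup$ in $n$ to a supremum, and the identity $sc^-F(x)=\sup_{U\in\mathcal N(x)}\inf_U F$ then converts the double supremum into $\sup_n sc^-F_n$.
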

\begin{remark}\label{rega}
If $F_n$ is an increasing sequence of lower semicontinuous functions which converges pointwisely to a function $F$, then
$F$ is lower semicontinuous and $F_n$ has a $\Gamma$-convergence to $F$ by Lemma \ref{inc}.
\end{remark}
\begin{lemma}\label{inf}
If the sequence $F_n$ has a $\Gamma$-convergence in $X$ to $F$, and there is a compact set $K\subset X$ such that
\begin{equation*}
  \inf_{x\in X} F_n(x)=\inf_{x\in K}F_n(x),
\end{equation*}
then $F$ takes its minimum in $X$, and
\begin{equation*}
  \min_{x\in X}F(x)=\lim_{n\rightarrow +\infty}\inf_{x\in X} F_n(x).
\end{equation*}
\end{lemma}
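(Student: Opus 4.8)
The plan is to reproduce the classical fundamental theorem of $\Gamma$-convergence. Recall that the hypothesis that $F_n$ $\Gamma$-converges to $F$ means $F=\Gamma\text{-}\liminf_n F_n=\Gamma\text{-}\limsup_n F_n$, while the hypothesis $\inf_X F_n=\inf_K F_n$ with $K$ compact is an equi-coercivity condition: it lets one choose almost-minimizers of the $F_n$ inside the fixed compact set $K$, and any cluster point of such a sequence must minimize $F$. Concretely, setting $m_n:=\inf_X F_n=\inf_K F_n$, I would prove the two inequalities $\inf_X F\le\liminf_n m_n$ and $\inf_X F\ge\limsup_n m_n$, producing a minimizer in the course of establishing the first.

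For $\inf_X F\le\liminf_n m_n$: writing $\ell:=\liminf_n m_n$, first pick a subsequence $(n_j)$ with $m_{n_j}\to\ell$; since $m_{n_j}=\inf_K F_{n_j}$, choose $x_j\in K$ with $F_{n_j}(x_j)\le m_{n_j}+j^{-1}$; by compactness of $K$, after passing to a further subsequence, assume $x_j\to\bar x\in K$. Now fix any $U\in\mathcal N(\bar x)$. For $j$ large one has $x_j\in U$, hence $\inf_{y\in U}F_{n_j}(y)\le F_{n_j}(x_j)\le m_{n_j}+j^{-1}$, so $\liminf_j\inf_{y\in U}F_{n_j}(y)\le\ell$; since a $\liminf$ along the full index set is no larger than a $\liminf$ along a subsequence, $\liminf_{n}\inf_{y\in U}F_n(y)\le\ell$. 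Taking the supremum over $U\in\mathcal N(\bar x)$ and using $F=\Gamma\text{-}\liminf_n F_n$ gives $F(\bar x)\le\ell$, so $\inf_X F\le F(\bar x)\le\liminf_n m_n$.

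For the reverse bound I would simply use the whole space as a neighbourhood: for any $x\in X$ we have $X\in\mathcal N(x)$, so from the formula for $\Gamma\text{-}\limsup_n F_n$ one reads off $F(x)\ge\limsup_n\inf_{y\in X}F_n(y)=\limsup_n m_n$, whence $\inf_X F\ge\limsup_n m_n$. Combining the two inequalities, $\limsup_n m_n\le\inf_X F\le\liminf_n m_n$, so $\lim_n m_n$ exists and equals $\inf_X F$; and since $F(\bar x)\le\liminf_n m_n=\inf_X F$, the infimum is attained at $\bar x$. This is exactly the asserted $\min_X F=\lim_n\inf_X F_n$.

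I do not anticipate a genuine difficulty. The only point requiring attention is that $\Gamma$-convergence concerns the full sequence, so the argument must be carried out through the explicit neighbourhood-basis formulas for $\Gamma\text{-}\liminf$ and $\Gamma\text{-}\limsup$ quoted above rather than through recovery sequences, and one must remember that passing to a subsequence can only raise a $\liminf$ — precisely the inequality used in the second paragraph. A minor bookkeeping nuisance, the possibility $\liminf_n m_n=-\infty$, is handled uniformly by allowing values in $[-\infty,+\infty]$ throughout, and it does not occur in the applications to (\ref{T-}), where the $F_n$ are bounded below independently of $n$.
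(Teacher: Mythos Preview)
Your argument is correct and is precisely the standard proof of the fundamental theorem of $\Gamma$-convergence. Note, however, that the paper does not give its own proof of this lemma: it is quoted from the references \cite{One} and \cite{gam} (Dal Maso's monograph), where exactly this argument appears, so there is nothing to compare against beyond the literature.

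One small technical caveat worth flagging: you extract a convergent subsequence from $(x_j)\subset K$ by appealing to compactness, which presupposes that compactness in $X$ implies sequential compactness. In a general topological space this need not hold, and the textbook version (Dal Maso, Theorem~7.4) is usually phrased either under a first-countability assumption or via nets. In the paper's application $X=X_t(x)\subset W^{1,1}([0,t],M)$ is a metric space, so the point is moot there, but if you want the statement at the level of generality in which it is written you should either add a first-countability hypothesis or rephrase the cluster-point argument accordingly.
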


\subsection{ Regularity of minimizers in $t$-dependent cases}\label{A.2}

The following results focus on the regularity of minimizers. Consider the following one dimensional variational problem
\begin{equation}\label{P}\tag{P}
  I(\gamma):=\int_a^b F(t,\gamma(t),\dot{\gamma}(t))dt+\Psi(\gamma(a),\gamma(b)),
\end{equation}
where $\gamma$ is taken in the class of absolutely continuous curves. $\Psi$ takes its value in $\mathbb R\cup \{+\infty\}$ and stands for the constraints on the two ends of the curves $\gamma$.

In the following, we focus on a certain minimizer of the above integral functional, which is denoted by $\gamma_*\in W^{1,1}([a,b],M)$. Due to the Lavrentiev phenomenon, the minimizier may not be Lipschitz. One can refer \cite{ball} for various counterexamples. Thanks to \cite{Bet},  the Lipschitz regularity of the minimizers still holds for $F:=L(x,v(x,t),\dot{x})$, where $v(x,t)$ is a Lipschitz function  (see Lemma \ref{3.1} (1)).  Let us recall the related results in  \cite{Bet} as follows.

\begin{itemize}
\item [\textbf{($\diamondsuit$):}] $F$ takes its value in $\mathbb R$, there exist a constant $\varepsilon>0$ and a Lebesgue-Borel-measurable map $k:[a,b]\times(0,+\infty)\rightarrow \mathbb R$ such that $k(t,1)\in L^1[a,b]$, and, for a.e. $t\in[a,b]$,  all $\sigma>0$
    \begin{equation*}
      |F(t_2,\gamma_*(t),\sigma\dot{\gamma_*}(t))-F(t_1,\gamma_*(t),\sigma\dot{\gamma_*}(t))|\leq k(t,\sigma)|t_2-t_1|,
    \end{equation*}
    where $t_{1},t_2\in[t-\varepsilon,t+\varepsilon]\cap [a,b]$.
\end{itemize}

\begin{lemma}\label{W4.1}
Let $\gamma_*$ be a minimizer of (\ref{P}). If $F$ satisfies ($\diamondsuit$), then there exists an absolutely continuous function $p\in W^{1,1}([a,b],\mathbb R)$ such that for a.e. $t\in[a,b]$, we have
\begin{equation}\label{W}\tag{W}
  F\left(t,\gamma_*(t),\frac{\dot{\gamma}_*(t)}{v}\right)v-F(t,\gamma_*(t),\dot{\gamma}_*(t))\geq p(t)(v-1),\quad \forall v>0,
\end{equation}
and $ |p'(t)|\leq k(t,1)$ for a.e. $ t\in[a,b]$.
\end{lemma}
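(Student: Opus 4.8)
The plan is to derive the pointwise inequality \eqref{W} from the minimality of $\gamma_*$ by an internal variation in the \emph{time reparametrization} of the curve, rather than by the classical perturbation $\gamma_*+\varepsilon\eta$ (which is unavailable here because $\gamma_*$ need not be Lipschitz and $F$ is not differentiable). Concretely, for a fixed point $t_0$ and a small parameter, I would consider a family of bi-Lipschitz increasing changes of variable $s\mapsto \tau_\varepsilon(s)$ of $[a,b]$ onto itself, supported near $t_0$, and compare $I(\gamma_*)$ with $I(\gamma_*\circ\tau_\varepsilon)$. After the change of variables in the integral $\int_a^b F(t,\gamma_*,\dot\gamma_*)\,dt$, the competitor's action becomes $\int F\big(\tau_\varepsilon(s),\gamma_*(\tau_\varepsilon(s)),\dot\gamma_*(\tau_\varepsilon(s))\tau_\varepsilon'(s)\big)\,\tfrac{1}{\tau_\varepsilon'(s)}\,ds$ — so the relevant rescaling of velocity by a factor $v=\tau_\varepsilon'$ and the companion factor $1/v$ on the Lagrangian appear exactly as in \eqref{W}. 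The boundary term $\Psi(\gamma(a),\gamma(b))$ is unchanged since $\tau_\varepsilon$ fixes the endpoints. The hypothesis (Lt) is precisely what is needed to control the error caused by the shift $t\mapsto\tau_\varepsilon(t)$ in the first slot of $F$, giving a remainder bounded by $\int k(t,\sigma)\,|\tau_\varepsilon(t)-t|\,dt$.

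Next I would set up the DuBois--Reymond / Erdmann-type machinery that produces the function $p$. Choosing the time-change $\tau_\varepsilon$ to be piecewise linear with slope $v$ on an interval of length $\sim\varepsilon$ to the left of a Lebesgue point $t$ and compensating slope on an adjacent interval, the first-order comparison $I(\gamma_*\circ\tau_\varepsilon)\geq I(\gamma_*)$ yields, in the limit $\varepsilon\to 0$, an inequality of the form
\begin{equation*}
  F\Big(t,\gamma_*(t),\tfrac{\dot\gamma_*(t)}{v}\Big)v-F(t,\gamma_*(t),\dot\gamma_*(t))\geq \big(Q(t)-Q(t')\big)\frac{v-1}{\cdots},
\end{equation*}
where $Q$ is the primitive accumulating the ``defect'' of the variation across the adjusted interval; one then defines $p(t):=Q(t)+\text{const}$ and checks it is the desired multiplier. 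The one-sided nature of the inequality (only $\geq$, and only for $v>0$) is consistent with using only increasing reparametrizations. The bound $|p'(t)|\leq k(t,1)$ then falls out by taking $v\to 1$ in \eqref{W}: the left side is $o(v-1)$ up to the error term controlled by $k(t,1)|v-1|$ from (Lt) (with $\sigma=1$), forcing $p'$ — which exists a.e.\ since $p\in W^{1,1}$ — to be dominated pointwise by $k(t,1)$.

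The main obstacle I anticipate is twofold: first, justifying that $\tau_\varepsilon$ is an \emph{admissible} competitor, i.e.\ that $\gamma_*\circ\tau_\varepsilon\in W^{1,1}$ with finite action and that the change-of-variables formula applies despite $\gamma_*$ only being absolutely continuous (this requires $\tau_\varepsilon$ bi-Lipschitz, so that $\gamma_*\circ\tau_\varepsilon$ stays absolutely continuous and $\dot\gamma_*$ pulls back correctly a.e.); second, extracting a \emph{single} function $p\in W^{1,1}([a,b],\mathbb R)$ uniformly in $v$ — the inequality \eqref{W} must hold with the \emph{same} $p$ for all $v>0$ simultaneously, which means $p$ should be identified as (a representative of) the a.e.\ defined quantity $F(t,\gamma_*(t),\dot\gamma_*(t))-\langle$ something $\rangle$ and shown to be absolutely continuous with the stated derivative bound. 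Since this is exactly the content proved in \cite{Bet}, I would carry out the argument by adapting that reference's construction verbatim to the manifold setting (working in local charts, where the reparametrization argument is unaffected by the geometry) and to our specific $F=L(x,v(x,t),\dot x)$, for which (Lt) will be verified separately when Lemma \ref{W4.1} is invoked in the proof of Lemma \ref{3.1}.
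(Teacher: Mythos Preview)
The paper does not give its own proof of this lemma: Lemma~\ref{W4.1} (together with Lemma~\ref{Lip6.3}) is stated in Appendix~\ref{A.2} as a result \emph{recalled} from \cite{Bet}, with no argument supplied. So there is no proof in the paper to compare against; the authors simply invoke the variational inequality of Bettiol--Mariconda as a black box and apply it later (in the proof of Lemma~\ref{3.1}) after checking that $F(t,x,\dot x)=L(x,v(x,t),\dot x)$ satisfies (Lt) with $k\equiv\lambda\|\partial_t v\|_\infty$.

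Your sketch is, in outline, the correct strategy and is indeed the one carried out in \cite{Bet}: the inequality \eqref{W} is obtained by comparing $I(\gamma_*)$ with the action of reparametrized competitors $\gamma_*\circ\tau$ for bi-Lipschitz time changes $\tau$ of $[a,b]$, and the hypothesis (Lt) is exactly what controls the error from shifting the explicit $t$-dependence. Your identification of the two delicate points --- admissibility of $\gamma_*\circ\tau$ in $W^{1,1}$ and the extraction of a single $p\in W^{1,1}$ valid for all $v>0$ --- is accurate; both are handled in \cite{Bet} and your proposed resolution (bi-Lipschitz $\tau$, construction of $p$ as an Erdmann/DuBois--Reymond multiplier) matches theirs. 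Since the paper itself defers entirely to that reference, your plan to ``carry out the argument by adapting \cite{Bet} verbatim'' is precisely what is called for, and nothing more is needed here.
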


\begin{lemma}\label{Lip6.3}
Let $\gamma_*$ be a minimizer of (\ref{P}). Assume $F$ is a Borel measurable function. If $F$ satisfies ($\diamondsuit$) and
\begin{itemize}
\item [(1)] Superlinearity: there exists a function $\Theta:[0,+\infty)\rightarrow\mathbb R$ satisfying
\begin{equation*}
  \lim_{r\rightarrow+\infty}\frac{\Theta(r)}{r}=+\infty,\quad \textrm{and}\quad F(t,\gamma_*(t),\xi)\geq \Theta(\|\xi\|)\quad \textrm{for\ all}\ \xi\in T_{\gamma_*(t)}M.
\end{equation*}

\item [(2)] Local boundedness: there exists $\rho>0$ and $M\geq 0$ such that for a.e. $t\in[a,b]$, $F(t,\gamma_*(t),\xi)\leq M$ for all $\xi\in T_{\gamma_*(t)}M$ with $\|\xi\|=\rho$.
\end{itemize}
Then the minimizer $\gamma_*$ is Lipschitz. Moreover, if $\|\dot \gamma_*(t)\|>\rho$, we take $v=\|\dot \gamma_*(t)\|/\rho>1$ in (\ref{W}), then
\begin{equation*}
  F\left(t,\gamma_*(t),\rho\frac{\dot{\gamma}_*(t)}{\|\dot \gamma_*(t)\|}\right)\geq \rho\frac{\Theta(\|\dot \gamma_*(t)\|)}{\|\dot \gamma_*(t)\|}-\|p\|_\infty.
\end{equation*}
Therefore $\|\dot \gamma_*(t)\|\leq \max\{\rho,R\}$ where  $R:=\inf\{s:\  \rho\frac{\Theta(s)}{s}>M+\|p\|_\infty\}$.
\end{lemma}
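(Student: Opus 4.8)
The plan is to deduce the statement entirely from the variational inequality (W) furnished by Lemma \ref{W4.1}. First I would apply Lemma \ref{W4.1}: since $F$ satisfies (Lt), there is an absolutely continuous $p\in W^{1,1}([a,b],\mathbb{R})$ with $|p'(t)|\le k(t,1)$ for a.e.\ $t$, and since $k(\cdot,1)\in L^1[a,b]$ the primitive $p$ is uniformly bounded, so $\|p\|_\infty<+\infty$; moreover (W) holds at a.e.\ $t\in[a,b]$.

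Next I fix $t$ in the full-measure set where (W) holds and where $\|\dot\gamma_*(t)\|>\rho$; if there are no such $t$, then $\|\dot\gamma_*\|\le\rho$ a.e.\ and we are already done. Set $v:=\|\dot\gamma_*(t)\|/\rho>1$, so that $\dot\gamma_*(t)/v=\rho\,\dot\gamma_*(t)/\|\dot\gamma_*(t)\|$ is a tangent vector of norm exactly $\rho$. Substituting this $v$ into (W) and dividing by $v>0$ gives
\[
F\!\left(t,\gamma_*(t),\rho\frac{\dot\gamma_*(t)}{\|\dot\gamma_*(t)\|}\right)\ \ge\ \frac{\rho}{\|\dot\gamma_*(t)\|}\,F(t,\gamma_*(t),\dot\gamma_*(t))+p(t)\Bigl(1-\frac{\rho}{\|\dot\gamma_*(t)\|}\Bigr).
\]
Applying the superlinearity bound $F(t,\gamma_*(t),\dot\gamma_*(t))\ge\Theta(\|\dot\gamma_*(t)\|)$ to the first term and, since $0<1-\rho/\|\dot\gamma_*(t)\|<1$, the bound $p(t)(1-\rho/\|\dot\gamma_*(t)\|)\ge-\|p\|_\infty$ to the second, I obtain the displayed inequality of the statement, namely $F(t,\gamma_*(t),\rho\dot\gamma_*(t)/\|\dot\gamma_*(t)\|)\ge\rho\,\Theta(\|\dot\gamma_*(t)\|)/\|\dot\gamma_*(t)\|-\|p\|_\infty$.

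Now I would invoke local boundedness: the left-hand side is $F$ evaluated at a vector of norm $\rho$, hence $\le M$ by hypothesis (2). Combining, $\rho\,\Theta(\|\dot\gamma_*(t)\|)/\|\dot\gamma_*(t)\|\le M+\|p\|_\infty$ for a.e.\ $t$ with $\|\dot\gamma_*(t)\|>\rho$. Because $\Theta(s)/s\to+\infty$ as $s\to+\infty$, the set $\{s:\rho\,\Theta(s)/s>M+\|p\|_\infty\}$ contains a half-line, so $R:=\inf\{s:\rho\,\Theta(s)/s>M+\|p\|_\infty\}$ is well defined (possibly $-\infty$), and the previous estimate forces $\|\dot\gamma_*(t)\|\le R$ at those $t$. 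Hence $\|\dot\gamma_*(t)\|\le\max\{\rho,R\}$ for a.e.\ $t\in[a,b]$, and since $\gamma_*\in W^{1,1}$ has essentially bounded derivative it is Lipschitz continuous.

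The genuine difficulty lies upstream, in Lemma \ref{W4.1} (the $t$-dependent variational inequality from \cite{Bet}), which is quoted here; the present argument is a short chain of elementary inequalities. The points needing a little care are that $\|p\|_\infty$ is finite --- which uses $k(\cdot,1)\in L^1[a,b]$ together with the absolute continuity of $p$ --- that $\dot\gamma_*(t)/v$ genuinely has norm $\rho$ so that both hypotheses (1) and (2) apply to it, and that $R$ is finite (or $-\infty$, in which case $\max\{\rho,R\}=\rho$), which is precisely what superlinearity guarantees.
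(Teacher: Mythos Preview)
Your proposal is correct and follows precisely the approach the paper itself indicates: the paper embeds the proof sketch inside the statement of the lemma (``we take $v=\|\dot\gamma_*(t)\|/\rho>1$ in (\ref{W}), then \ldots\ Therefore $\|\dot\gamma_*(t)\|\le\max\{\rho,R\}$''), citing \cite{Bet} for Lemma~\ref{W4.1}, and you have correctly expanded that sketch by substituting into (W), dividing by $v$, and applying the superlinearity and local boundedness hypotheses. One very minor remark: the implication ``$\rho\,\Theta(\|\dot\gamma_*(t)\|)/\|\dot\gamma_*(t)\|\le M+\|p\|_\infty\Rightarrow\|\dot\gamma_*(t)\|\le R$'' uses that the set $\{s:\rho\,\Theta(s)/s>M+\|p\|_\infty\}$ contains a half-line $[s_0,\infty)$ (which you note), so strictly speaking one gets $\|\dot\gamma_*(t)\|<s_0$; the paper's statement has the same informal identification of $R$ with such an $s_0$, and the Lipschitz conclusion is unaffected.
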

\section{Proof of Lemma \ref{Bol}}\label{apcc}

When $H(x,u,p)$ is superlinear in $p$, it is well-known that the functional $\mathbb L^t$ admits minimizers in $X_t(x)$. It remains to prove the existence of minimizers of $\mathbb L^t$ when $H(x,u,p)$ is coercive in $p$. Define
\begin{equation*}
  \mathbb L^t_n(\gamma)=\varphi(\gamma(0))+\int_0^t L_n(\gamma(s),v(\gamma(s),s),\dot{\gamma}(s))ds,
\end{equation*}
where $L_n$ is defined as in Section \ref{H3}. Then each $\mathbb L^t_n$ admits minimizers in $X_t(x)$. To prove the existence of the minimizers of $\mathbb L^t(\gamma)$, we define
\begin{equation*}
  m(r):=\inf_{x\in M}\left(\inf_{\|\dot x\|\geq r}L_1(x,0,\dot x)\right),\quad \forall r\geq 0.
\end{equation*}
It is clear that the function $m(r)$ is superlinear, and
\begin{equation*}
\begin{aligned}
  m(\|\dot x\|)&\leq L_n(x,0,\dot x)\leq L_n(x,u,\dot x)+\lambda |u|
  \\ &\leq L(x,u,\dot x)+\lambda |u|,\quad \forall n\in\mathbb N,\ \forall (x,u,\dot x)\in TM\times \mathbb R.
\end{aligned}
\end{equation*}
For any sequence $\gamma_n$ in $X_t(x)$ with $\lim_n \mathbb L^t(\gamma_n)<+\infty$, we have $\sup_n\int_0^tm(\|\dot \gamma_n\|)ds<+\infty$, so $\gamma_n$ admits a weakly sequentially converging subsequence. By Lemma \ref{TM}, the functionals $\mathbb L^t$ and $\mathbb L^t_n$ are sequentially weakly lower semicontinuous on $X_t(x)$. Since $X_t(x)$ is a metric space, the functionals $\mathbb L^t$ and $\mathbb L^t_n$ are also lower semicontinuous. Note that $\{\mathbb L^t_n\}_{n\in \mathbb{N}}$ is an increasing sequence, and converges pointwisely to $\mathbb L^t$ on $X_t(x)$. Both $\mathbb L^t$ and $\mathbb L^t_n(\gamma)$ are lower semicontinuous.  We conclude that $\Gamma-\lim_{n\rightarrow +\infty} \mathbb L_n^t=\mathbb L^t$ on $X_t(x)$ by Lemma \ref{inc}.

If the minimizers $\gamma_n$ of $\mathbb L_n^t$ are contained in a compact subset of $X_t(x)$, then by Lemma \ref{inf}, one can obtain that  $\mathbb L^t$ admits a minimum point on $X_t(x)$. It remains to show that there exists a compact set in $X_t(x)$ such that all minimizers $\gamma_n$ are contained in this set. Consider the set
\begin{equation*}
  K_t(x):=\left\{\gamma\in X_t(x):\ \int_0^tm(\|\dot \gamma\|)ds\leq \|\phi\|_\infty+\mathbb Kt+2\lambda Kt\right\},
\end{equation*}
where $\mathbb K:=\sup_{x\in M}L(x,0,0)$ and $K:=\|v(x,t)\|_\infty$. The set $K_t(x)$ is weakly sequentially compact in $W^{1,1}([0,t],M)$. According to \cite[Theorem 2.13]{One}, $K_t(x)$ is compact in $X_t(x)$. For the constant curve $\gamma_x\equiv x$, we have
\begin{equation*}
  \int_0^tm(\|\dot \gamma_x\|)ds\leq\mathbb L^t_n(\gamma_x)+\lambda Kt\leq \mathbb L^t(\gamma_x)+\lambda Kt\leq \|\phi\|_\infty+\mathbb Kt+2\lambda Kt.
\end{equation*}
Therefore $\gamma_x$ is contained in $K_t(x)$. Similarly, for minimizers $\gamma_n$, we have
\begin{equation*}
\begin{aligned}
  \int_0^tm(\|\dot \gamma_n\|)ds&\leq\mathbb L^t_n(\gamma_n)+\lambda Kt\leq \mathbb L^t_n(\gamma_x)+\lambda Kt
  \\ &\leq \mathbb L^t(\gamma_x)+\lambda Kt\leq \|\phi\|_\infty+\mathbb Kt+2\lambda Kt.
\end{aligned}
\end{equation*}
Thus, all $\gamma_n$ are contained in $K_t(x)$.
\qed

\section{Proof of Lemma \ref{3.1}}\label{apbb}

\begin{proof}We first prove Item (1). According to (LIP) and the Lipschitz continuity of $v(x,t)$ on $M\times [0,T]$, for each $\tau\in [0,t]$, the map $s\mapsto L(\gamma(\tau),v(\gamma(\tau),s),\dot{\gamma}(\tau))$ satisfies the condition ($\diamondsuit$), where $k\equiv\lambda\|\partial_t v(x,t)\|_\infty$. By Lemma \ref{Lip6.3}, for every $(x,t)\in M\times[0,T]$, the minimizers of $u(x,t)$ are Lipschitz. However, the Lipschitz constant depends on the end point $(x,t)$. We aim to show that for $(x',t')$ sufficiently close to $(x,t)$, the Lipschitz constant of the minimizers of $u(x',t')$ is independent of $(x',t')$.

For any $r>0$, if $d(x,x')\leq r$ and $|t-t'|\leq r/2$, where $t\geq r>0$, we denote by $\gamma(s;x,t)$ and $\gamma(s;x',t')$ the minimizers of $u(x,t)$ and $u(x',t')$ respectively. Then we have
\begin{equation*}
\begin{aligned}
  u(x',t')=&\varphi(\gamma(0;x',t'))+\int_0^{t'}L(\gamma(s;x',t'),v(\gamma(s;x',t'),s),\dot{\gamma}(s;x',t'))ds
  \\ \leq& \varphi(\gamma(0;x,t))+\int_0^{t-r}L(\gamma(s;x,t),v(\gamma(s;x,t),s),\dot{\gamma}(s;x,t))ds\\
  &+\int_{t-r}^{t'}L(\alpha(s),v(\alpha(s),s),\dot \alpha(s))ds,
\end{aligned}
\end{equation*}
where $\alpha:[t-r,t']\rightarrow M$ is a geodesic connecting $\gamma(t-r;x,t)$ and $x'$ with constant speed. Noticing that
\begin{equation*}
  \|\dot \alpha\|\leq \frac{1}{t'-(t-r)}\bigl{(}d(\gamma(t-r;x,t),x)+d(x,x')\bigl{)}\leq 2\left(\frac{1}{r}\int_{t-r}^t\|\dot\gamma(s;x,t)\|ds+1\right),
\end{equation*}
we obtain that
\begin{equation*}
  \int_0^{t'}L(\gamma(s;x',t'),v(\gamma(s;x',t'),s),\dot{\gamma}(s;x',t'))ds
\end{equation*}
has a bound depending only on $(x,t)$ and $r$. By (SL), there exists a constant $M(x,t,r)>0$ such that
\begin{equation*}
  \int_0^{t'}\|\dot{\gamma}(s;x',t')\|ds\leq M(x,t,r),
\end{equation*}
where $t'\geq t-r/2>0$. It means $\|\dot{\gamma}(s;x',t')\|$ are equi-integrable. Therefore, for $(x',t')$ sufficiently close to $(x,t)$, there exists a constant $R(x,t,r)>0$ and $s_0\in[0,t']$ such that $\|\dot \gamma(s_0;x',t')\|\leq R(x,t,r)$. By Lemma \ref{W4.1}, there exists an absolutely continuous function $p(t;x',t')$ satisfying $|p'(t;x',t')|\leq \lambda\|\partial_tv(x,t)\|_\infty$ such that
\begin{equation*}
\begin{aligned}
  L(&\gamma(s;x',t'),v(\gamma(s;x',t'),s),\frac{\dot \gamma(s;x',t')}{\theta})\theta
  \\ &-L(\gamma(s;x',t'),v(\gamma(s;x',t'),s),\dot \gamma(s;x',t'))\geq p(s;x',t')(\theta-1),\quad \forall \theta>0.
\end{aligned}
\end{equation*}
One can take $\theta=2$ and $t=s_0$ to obtain the upper bound of $p(s_0)$, and take $\theta=1/2$ and $t=s_0$ to obtain the lower bound of  $p(s_0)$. Note that $p'(t)$ is bounded. We finally obtain the bound of $\|p(t)\|_\infty$, which is independent of $(x',t')$. Since $L(x,u,\dot x)$ satisfies (SL), according to Lemma \ref{Lip6.3} and taking $\rho=1$, we have
\begin{equation*}
  L(\gamma(s;x',t'),v(\gamma(s;x',t'),s),\frac{\dot \gamma(s;x',t')}{\|\dot \gamma(s;x',t')\|})\geq \frac{\Theta(\|\dot \gamma(s;x',t')\|)}{\|\dot \gamma(s;x',t')\|}-\|p(s;x',t')\|_\infty.
\end{equation*}
Therefore, for $(x',t')$ sufficiently close to $(x,t)$, the minimizers $\gamma(s;x',t')$ have a Lipschitz constant independent of $(x',t')$.

\vspace{1ex}

In order to prove Item (2), we first show that $u(x,t)$ is locally Lipschitz in $x$. For any $\delta>0$, fix $(x_0,t)\in M\times[\delta,T]$ and $x$, $x'\in B(x_0,\delta/2)$. We denote by $d_0=d(x,x')\leq \delta$ the Riemannian distance between $x$ and $x'$. Then
\begin{equation*}
\begin{aligned}
  u(x',t)-u(x,t)\leq& \int_{t-d_0}^{t}L(\alpha(s),v(\alpha(s),s),\dot \alpha(s))ds
  \\ &-\int_{t-d_0}^{t}L(\gamma(s;x,t),v(\gamma(s;x,t),s),\dot{\gamma}(s;x,t))ds,
\end{aligned}
\end{equation*}
where $\gamma(s;x,t)$ is a minimizer of $u(x,t)$ and $\alpha:[t-d_0,t]\rightarrow M$ is a geodesic connecting $\gamma(t-d_0;x,t)$ and $x'$ with constant speed. By Lemma \ref{3.1} (1), if $x\in B(x_0,\delta/2)$, the bound of $\|\dot \gamma(s;x,t)\|$ depends only on $x_0$ and $\delta$. Noticing that
\begin{equation*}
  \|\dot \alpha(s)\|\leq \frac{d(\gamma(t-d_0;x,t),x')}{d_0}\leq \frac{d(\gamma(t-d_0;x,t),x)}{d_0}+1,
\end{equation*}
and  \[d(\gamma(t-d_0;x,t),x)\leq \int_{t-d_0}^t\|\dot \gamma(s;x,t)\|ds,\] the bound of $\|\dot \alpha(s)\|$  depends only on $x_0$ and $\delta$. Exchanging the role of $(x,t)$ and $(x',t)$, one obtain that \[|u(x,t)-u(x',t)|\leq J_1d(x,x'),\] where $J_1$ depends only on $x_0$ and $\delta$. Since $M$ is compact, we conclude that for $t\in(0,T]$, the value function $u(\cdot,t)$ is Lipschitz on $M$.

We are now going to show the locally Lipschitz continuity of $u(x,t)$ in $t$. Given $t_0\geq 3\delta/2$ and $t$, $t'\in[t_0-\delta/2,t_0+\delta/2]$, without any loss of generality, we assume $t'>t$. Then
\begin{equation*}
\begin{aligned}
  u(x,t')-u(x,t)\leq& u(\gamma(t;x,t'),t)-u(x,t)
  \\ &+\int_t^{t'}L(\gamma(s;x,t'),v(\gamma(s;x,t'),s),\dot \gamma(s;x,t'))ds,
\end{aligned}
\end{equation*}
where the bound of $\|\dot \gamma(s;x,t')\|$ depends only on $t_0$ and $\delta$. We have shown that for $t\geq \delta$, the following holds
\begin{equation*}
  u(\gamma(t;x,t'),t)-u(x,t)\leq J_1d(\gamma(t;x,t'),x)\leq J_1\int_t^{t'}\|\dot \gamma(s;x,t')\|ds\leq J_2(t'-t).
\end{equation*}
Thus, $u(x,t')-u(x,t)\leq J_3(t'-t)$, where $J_3$ depends only on $t_0$ and $\delta$. The condition $t'<t$ is similar. We conclude the locally Lipschitz continuity of $u(x,\cdot)$ on $(0,T]$.

\vspace{1ex}

At last, we prove Item (3).  We first prove that $u(x,t)$ is continuous at $t=0$. For each $\varphi\in C(M)$, there is a sequence $\varphi_m\in \text{Lip}(M)$ converging to $\varphi$  uniformly. We take  $\varphi$ and $\varphi_m$ as the initial functions in (\ref{u}), and denote by $u(x,t)$ and $u_m(x,t)$ the corresponding value functions respectively. Since $v(x,t)$ is fixed, by the non-expansiveness of the Lax-Oleinik semigroup, we have $\|u(x,t)-u_m(x,t)\|_\infty\leq \|\varphi-\varphi_m\|_\infty$. Thus, without loss of generality, we assume the initial function to be Lipschitz in the following discussion. Take a constant curve $\alpha(t)\equiv x$. Let $\gamma:[0,t]\to M$ be a minimizer of $u(x,t)$. It is obvious that
\begin{equation*}
  u(x,t)=\varphi(\gamma(0))+\int_0^tL(\gamma(s),v(\gamma(s),s),\dot \gamma(s))ds\leq \varphi(x)+\int_0^tL(x,v(x,s),0)ds,
\end{equation*}
so $\limsup_{t\rightarrow 0^+}u(x,t)\leq \varphi(x)$. By (SL), there exists a constant $C>0$ such that
\begin{align*}
  \int_0^tL(\gamma(\tau),v(\gamma(\tau),\tau),\dot \gamma(\tau))d\tau&\geq \int_0^t\|\partial_x \varphi\|_\infty\|\dot \gamma(\tau)\|d\tau+Ct\\
  &\geq \|\partial_x \varphi\|_\infty d(\gamma(0),\gamma(t))+Ct,
\end{align*}
which implies that
\begin{equation*}
  \int_0^tL(\gamma(\tau),v(\gamma(\tau),\tau),\dot \gamma(\tau))d\tau+\varphi(\gamma(0))\geq \varphi(x)+Ct.
\end{equation*}
Therefore $\liminf_{t\rightarrow 0^+}u(x,t)\geq \varphi(x)$. Combining with Lemma \ref{3.1} (2),  $u(x,t)$ is continuous on $M\times [0,T]$.

By a standard argument, one can show that the value function $u(x,t)$ is a  solution of (\ref{cu}). We omit the details.

\end{proof}

\section{Weak KAM solutions and viscosity solutions}\label{swv}

%


Following Fathi \cite{Fa08}, one can extend the definitions of backward and forward weak KAM solutions of  equation \eqref{hj} by using absolutely continuous calibrated curves instead of $C^1$ curves.
\begin{definition}\label{bws}
A function $u_-\in C(M)$ is called a backward weak KAM solution of \eqref{hj} if the following hold.
\begin{itemize}
\item [(1)] For each absolutely continuous curve $\gamma:[t',t]\rightarrow M$, we have
\begin{equation*}
  u_-(\gamma(t))-u_-(\gamma(t'))\leq \int_{t'}^{t}L(\gamma(s),u_-(\gamma(s)),\dot \gamma(s))ds.
\end{equation*}
The above condition reads that $u_-$ is dominated by $L$ and denoted by $u_-\prec L$.

\item [(2)] For each $x\in M$, there exists an absolutely continuous curve $\gamma_-:(-\infty,0]\rightarrow M$ with $\gamma_-(0)=x$ such that
\begin{equation*}
  u_-(x)-u_-(\gamma_-(t))=\int_t^0L(\gamma_-(s),u_-(\gamma_-(s)),\dot \gamma_-(s))ds,\quad \forall t<0.
\end{equation*}
The curves satisfying the above equality are called $(u_-,L,0)$-calibrated curves.
\end{itemize}
\end{definition}
A forward weak KAM solution of \eqref{hj} can be defined in a similar manner. We omit the details.
%

\begin{proposition}\label{u<LLip}
If $u\prec L$, then $u$ is a Lipschitz function  on $M$.
\end{proposition}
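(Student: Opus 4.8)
The plan is to exploit the domination inequality $u\prec L$ along short geodesics traversed slowly, using the local bound on $L$ from Lemma \ref{CL}. First I would record that since $u\in C(M)$ and $M$ is compact, $u$ is bounded, so $\|u\|_\infty<+\infty$; this is the only global input needed. I would then fix the constants $\delta>0$ and $C_L>0$ given by Lemma \ref{CL} (applied to the Lagrangian associated with $H(x,0,\cdot)$), so that $L(x,0,\xi)\le C_L$ for all $x\in M$ and all $\xi$ with $\|\xi\|\le\delta$.

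Next, given arbitrary $x,y\in M$ with $x\ne y$, I would join them by a minimizing geodesic $\gamma:[0,T]\to M$ of constant speed, reparametrized so that $T:=d(x,y)/\delta$ and hence $\|\dot\gamma(s)\|=\delta$ for every $s\in[0,T]$; such a geodesic exists because a closed Riemannian manifold is geodesically complete, and it is smooth, hence absolutely continuous, so $u\prec L$ applies to it. This yields
\[
u(y)-u(x)=u(\gamma(T))-u(\gamma(0))\le\int_0^T L(\gamma(s),u(\gamma(s)),\dot\gamma(s))\,ds.
\]
Then I would bound the integrand: by the uniform Lipschitz property (LIP) of $L$ in the $u$--variable and by Lemma \ref{CL} (note $\dot\gamma(s)\in\bar B(0,\delta)$),
\[
L(\gamma(s),u(\gamma(s)),\dot\gamma(s))\le L(\gamma(s),0,\dot\gamma(s))+\lambda|u(\gamma(s))|\le C_L+\lambda\|u\|_\infty,
\]
so that $u(y)-u(x)\le (C_L+\lambda\|u\|_\infty)T=\frac{C_L+\lambda\|u\|_\infty}{\delta}\,d(x,y)$. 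Interchanging the roles of $x$ and $y$ gives $|u(x)-u(y)|\le\frac{C_L+\lambda\|u\|_\infty}{\delta}\,d(x,y)$, which is the desired Lipschitz estimate (the case $x=y$ being trivial).

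I do not expect a serious obstacle: the statement follows directly from Lemma \ref{CL} and the definition of domination. The only points requiring a little care are that $u$ is automatically bounded (from $u\in C(M)$ and compactness of $M$), that a minimizing geodesic exists and can be reparametrized to have speed exactly $\delta$ so that Lemma \ref{CL} is applicable along it, and that geodesics are absolutely continuous so the inequality $u\prec L$ may legitimately be invoked. If one prefers to avoid appealing to Hopf--Rinow, one could instead cover $M$ by finitely many geodesically convex balls, prove the estimate for nearby points, and chain it; but this refinement is unnecessary since $M$ is closed.
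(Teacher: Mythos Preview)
Your proof is correct and follows essentially the same approach as the paper: join $x$ and $y$ by a geodesic of constant speed $\delta$, bound the integrand by $C_L+\lambda\|u\|_\infty$ via Lemma \ref{CL} and (LIP), and integrate to get the Lipschitz constant $(C_L+\lambda\|u\|_\infty)/\delta$. The paper's argument is identical, only omitting the remarks on boundedness of $u$ and existence of minimizing geodesics that you spell out.
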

\begin{proof}
For each $x,y\in M$, let $\alpha:[0,d(x,y)/\delta]\rightarrow M$ be a geodesic of length $d(x,y)$, with constant speed $\|\dot \alpha\|=\delta$ and connecting $x$ and $y$. Then
\begin{equation*}
  L(\alpha(s),u(\alpha(s)),\dot{\alpha}(s))\leq \bar{C}+\lambda \|u\|_\infty,\quad \forall s\in [0,d(x,y)/\delta].
\end{equation*}
Then by $u\prec L$ we have
\begin{equation*}
  u(y)-u(x)\leq \int_0^{d(x,y)/\delta}L(\alpha(s),u(\alpha(s)),\dot{\alpha}(s))ds\leq \frac{1}{\delta}(\bar{C}+\lambda \|u\|_\infty) d(x,y).
\end{equation*}
Exchanging the role of $x$ and $y$, we get the Lipschitz continuity of $u$.
\end{proof}

By \cite[Corollary 8.3.4]{Fa08}, we have
\begin{proposition}\label{aesub}
Suppose $H(t,x,p)$ is a continuous function,  and it is coercive and convex in $p$, then $u(x,t)$ is a subsolution of $\partial_tu+H(t,x,\partial_xu)=0$ if $u(x,t)$ is locally Lipschitz and $\partial_tu+H(t,x,\partial_xu)\leq 0$ holds almost everywhere.
\end{proposition}

\begin{proposition}\label{fix}
The following statements are equivalent:
\begin{itemize}
\item [(1)] $u_-$ is a viscosity solution of (\ref{E});

\item [(2)] $u_-$ is a fixed point of $T^-_t$;

\item [(3)] $u_-$ is a backward weak KAM solution.
\end{itemize}
Similarly,  the following statements are also equivalent:
\begin{itemize}
\item [(i)] $-u_+$ is a viscosity solution of $H(x,-u,-\partial_xu)=0$;

\item [(ii)] $u_+$ is a fixed point of $T^+_t$;

\item [(iii)] $u_+$ is a forward weak KAM solution.
\end{itemize}
\end{proposition}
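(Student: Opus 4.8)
The plan is to prove, for $u_-$, the equivalence $(1)\Leftrightarrow(2)$ together with $(2)\Rightarrow(3)$ and $(3)\Rightarrow(2)$; the three equivalences for $u_+$ will then follow by applying the $u_-$-part of the statement to the Hamiltonian $\bar H(x,u,p):=H(x,-u,-p)$ and its Lagrangian $\bar L(x,u,\dot x)=L(x,-u,-\dot x)$ (which satisfy the same standing hypotheses), combined with the conjugation $T^+_t\varphi=-\bar T^-_t(-\varphi)$ of Remark \ref{tplus}: indeed $-u_+$ is a viscosity solution of $\bar H(x,u,\partial_xu)=0$ iff it is a fixed point of $\bar T^-_t$ iff it is a backward weak KAM solution for $\bar L$, and the substitution $\gamma\mapsto\gamma(-\cdot)$ converts the last condition into Definition \ref{fws}.

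For $(1)\Leftrightarrow(2)$ I would consider the time-independent extension $u(x,t):=u_-(x)$. A routine test-function argument shows that $u$ is a viscosity sub- (resp.\ super-) solution of $\partial_tu+H(x,u,\partial_xu)=0$ on $M\times(0,+\infty)$ if and only if $u_-$ is a viscosity sub- (resp.\ super-) solution of (\ref{E}): at an interior-in-$t$ extremum $(x_0,t_0)$ of $u-\psi$ one has $\partial_t\psi(x_0,t_0)=0$, while $\psi(\cdot,t_0)$ is an admissible test function for $u_-$ at $x_0$. Since also $u(\cdot,0)=u_-$, the function $u$ is a continuous viscosity solution of (\ref{C}) with initial datum $u_-$ precisely when $u_-$ solves (\ref{E}); by the uniqueness part of Theorem \ref{m1} this forces $u(x,t)=T^-_tu_-(x)$, so $u_-$ solves (\ref{E}) if and only if $T^-_tu_-=u_-$ for every $t>0$.

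For $(2)\Rightarrow(3)$, assume $T^-_tu_-=u_-$ for all $t>0$. Then $T^-_\tau u_-=u_-$ occurs inside the representation formula (\ref{T-}), so with $\varphi=u_-$ its integrand reduces to the $t$-independent $L(\gamma(\cdot),u_-(\gamma(\cdot)),\dot\gamma(\cdot))$. Feeding an arbitrary absolutely continuous $\gamma:[t',t]\to M$ into (\ref{T-}) for $T^-_{t-t'}u_-(\gamma(t))=u_-(\gamma(t))$ gives $u_-(\gamma(t))-u_-(\gamma(t'))\leq\int_{t'}^tL(\gamma,u_-(\gamma),\dot\gamma)$, i.e.\ $u_-\prec L$. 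For the calibrated curve through a fixed $x$, Lemma \ref{Bol} yields, for each $n\in\mathbb N$, a minimizer $\gamma_n:[0,n]\to M$ with $\gamma_n(n)=x$ realizing $u_-(x)=T^-_nu_-(x)=u_-(\gamma_n(0))+\int_0^nL(\gamma_n,u_-(\gamma_n),\dot\gamma_n)$; splitting $[0,n]$ and applying $u_-\prec L$ on each piece forces equality on every subinterval, so each restriction of $\gamma_n$ is $(u_-,L,0)$-calibrated and minimizes $\int L(\gamma,u_-(\gamma),\dot\gamma)$ between its endpoints. The $t$-independent Lagrangian $F(y,\dot y):=L(y,u_-(y),\dot y)$ satisfies (Lt) with $k\equiv0$, is superlinear in $\dot y$ (as in the proof of Lemma \ref{Bol}), and is locally bounded along these minimizers by Lemma \ref{CL}; hence Lemma \ref{W4.1} and Lemma \ref{Lip6.3}, used exactly as in the proof of Lemma \ref{3.1}(1), show that the curves $\gamma_n$ are equi-Lipschitz on each bounded time interval. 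Reparametrizing $\gamma_n$ onto $[-n,0]$ and extracting (Arzel\`a--Ascoli plus a diagonal argument) a subsequence converging uniformly on compacts yields a locally Lipschitz curve $\gamma_-:(-\infty,0]\to M$ with $\gamma_-(0)=x$, and sequential weak lower semicontinuity of the action (Lemma \ref{TM}) together with $u_-\prec L$ upgrades the limit to the calibration equality on every $[t,0]$, which is Definition \ref{bws}(2).

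For $(3)\Rightarrow(2)$ I would run the iteration (\ref{k}) of Lemma \ref{begin} with $\varphi=u_0:=u_-$. On one hand $u_-\prec L$ gives $u_-(x)\leq u_-(\gamma(0))+\int_0^tL(\gamma,u_-(\gamma),\dot\gamma)$ for every curve ending at $x$, so $u_1(x,t)\geq u_-(x)$; on the other hand, inserting into (\ref{k}) a $(u_-,L,0)$-calibrated curve through $x$, restricted to length $t$, and using the calibration equality gives $u_1(x,t)\leq u_-(x)$. Hence $u_1\equiv u_-$, and since the integrand in (\ref{k}) then again only involves $u_-$, induction yields $u_k\equiv u_-$ for all $k$; by Lemma \ref{begin}(i) the $u_k$ converge uniformly to $T^-_tu_-$, whence $T^-_tu_-=u_-$. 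I expect the only genuinely delicate point to be the construction of the infinite calibrated curve in $(2)\Rightarrow(3)$: since no contact flow is available and the Lavrentiev phenomenon may occur, the equi-Lipschitz estimate that makes the diagonal extraction converge must be read off carefully from the variational inequality (\ref{W}) for the $t$-independent Lagrangian $L(x,u_-(x),\dot x)$, and one must check that this estimate is uniform in $n$ on each fixed time interval.
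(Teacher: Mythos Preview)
Your argument is correct and matches the paper for $(1)\Leftrightarrow(2)$, but diverges on the other two implications. For $(3)\Rightarrow(2)$ the paper argues by a direct Gronwall contradiction (set $F(\tau)=u_-(\gamma(\tau))-T^-_\tau u_-(\gamma(\tau))$ along a minimizer $\gamma$ of $T^-_tu_-(x)$ and derive $F(s)\leq\lambda\int_{s_0}^sF$ from domination plus the minimizer identity, forcing $F\equiv0$), whereas you feed the calibrated curve into the iteration (\ref{k}) to get $u_k\equiv u_-$ for all $k$; both work, and yours is arguably tidier. For $(2)\Rightarrow(3)$ the paper avoids your compactness step entirely: instead of taking minimizers $\gamma_n:[0,n]\to M$ and running an Arzel\`a--Ascoli/diagonal extraction, it simply \emph{concatenates} successive length-one minimizers $\gamma_n:[0,1]\to M$ of $T^-_1u_-=u_-$ (with $\gamma_n(1)=\gamma_{n-1}(0)$, $\gamma_0(0)=x$) into a single absolutely continuous $\gamma_-$, which is calibrated on each piece and hence globally. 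This sidesteps the equi-Lipschitz estimate you correctly flag as the delicate point---a real simplification, since under (CER) the Lagrangian may take the value $+\infty$ while Lemmas \ref{W4.1}--\ref{Lip6.3} are stated for real-valued $F$ (in the autonomous case (Lt) is vacuous with $k\equiv0$ and (\ref{W}) holds trivially where $F=+\infty$, so your route still goes through, but the paper's concatenation needs no such remark). What your approach buys in return is Lipschitz, not merely absolutely continuous, calibrated curves.
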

\begin{proof}
By Theorem  \ref{m1}, (2) implies (1). We show that (1) implies (2). Since $u_-$ is a viscosity solution of (\ref{E}), the function $u(x,t):=u_-(x)$ is the viscosity solution of (\ref{C}) with the initial condition $u(x,0)=u_-(x)$. By the comparison theorem, we have $u(x,t)=T^-_t u_-(x)$, which implies $u_-=T^-_t u_-$.

Now we show that (3) implies (2). According to the definition of the backward weak KAM solution, for $u_-\in\mathcal S_-$ we have
\begin{equation*}
  u_-(x)=\inf_{\gamma(t)=x}\left\{u_-(\gamma(0))+\int_0^t L(\gamma(\tau),u_-(\gamma(\tau)),\dot{\gamma}(\tau))d\tau\right\},
\end{equation*}
where the infimum is taken in the class of absolutely continuous curves. We show $u_-(x)\leq T^-_tu_-(x)$. The opposite direction is similar.  Assume \[u_-(x)>T^-_tu_-(x).\] Let $\gamma:[0,t]\rightarrow M$ with $\gamma(t)=x$ be a minimizer of $T^-_tu_-(x)$. Define
\begin{equation*}
  F(\tau):=u_-(\gamma(\tau))-T^-_\tau u_-(\gamma(\tau)).
\end{equation*}
Since $F(t)>0$ and $F(0)=0$, there is $s_0\in [0,t)$ such that $F(s_0)=0$ and $F(s)>0$ for $s\in(s_0,t]$. By definition we have
\begin{equation*}
  T^-_su_-(\gamma(s))=T^-_{s_0}u_-(\gamma(s_0))+\int_{s_0}^sL(\gamma(\tau),T^-_\tau u_-(\gamma(\tau)),\dot\gamma(\tau))d\tau,
\end{equation*}
and
\begin{equation*}
  u_-(\gamma(s))\leq u_-(\gamma(s_0))+\int_{s_0}^sL(\gamma(\tau),u_-(\gamma(\tau)),\dot\gamma(\tau))d\tau,
\end{equation*}
which implies
\begin{equation*}
  F(s)\leq \lambda\int_{s_0}^s F(\tau)d\tau.
\end{equation*}
By the Gronwall inequality, we conclude $F(s)\equiv 0$ for all $s\in[s_0,t]$, which contradicts $F(t)>0$.

It remains to show (2) implies (3). For each absolutely continuous curve $\gamma:[t',t]\rightarrow M$, we have
\begin{equation*}
\begin{aligned}
  &u_-(\gamma(t))-u_-(\gamma(t'))=T^-_tu_-(\gamma(t))-T^-_{t'}u_-(\gamma(t'))
  \\ &\leq \int_{t'}^{t}L(\gamma(s),T^-_su_-(\gamma(s)),\dot \gamma(s))ds=\int_{t'}^{t}L(\gamma(s),u_-(\gamma(s)),\dot \gamma(s))ds,
\end{aligned}
\end{equation*}
which implies $u_-\prec L$. We now show the existence of the $(u_-,L,0)$-calibrated curve. We define a sequence of absolutely continuous curves as follows: Let $\gamma_0(0)=x$ and $\gamma_n:[0,1]\rightarrow M$  be a minimizer of $T^-_1 u_-(\gamma_{n-1}(0))$ with $\gamma_n(1)=\gamma_{n-1}(0)$. We define $\gamma_-:(-\infty,0]\rightarrow M$ by $\gamma_-(-t):=\gamma_{[t]+1}([t]+1-t)$ for all $t>0$, which is also absolutely continuous. Here, $[t]$ stands for the greatest integer not greater than $t$. Then we have
\begin{equation*}
\begin{aligned}
  u_-(\gamma_-(-[t]))-u_-(\gamma_-(-t))
  &=T^-_1u_-(\gamma_{[t]+1}(1))-T^-_{[t]+1-t}u_-(\gamma_{[t]+1}([t]+1-t))\\
  &=\int_{[t]+1-t}^{1}L(\gamma_{[t]+1}(s),T^-_su_-(\gamma_{[t]+1}(s)),\dot \gamma_{[t]+1}(s))ds\\
  &=\int_{-t}^{-[t]}L(\gamma_-(s),u_-(\gamma_-(s)),\dot \gamma_-(s))ds.
\end{aligned}
\end{equation*}
Similarly, one can prove that for all $n=0,1,\dots$,
\[u_-(\gamma_-(-n))-u_-(\gamma_-(-n-1))=\int_{-n-1}^{-n}L(\gamma_-(s),u_-(\gamma_-(s)),\dot \gamma_-(s))ds.\]
We conclude that $\gamma_-:(-\infty,0]\rightarrow M$ is a $(u_-,L,0)$-calibrated curve.

The proof is now complete.
\end{proof}

\medskip

\end{document}